\theoremstyle{plain}
\newtheorem{thm}{Theorem}[section]
\newtheorem{prp}[thm]{Proposition}
\newtheorem{cor}[thm]{Corollary}
\newtheorem{lem}[thm]{Lemma}
\newtheorem{que}[thm]{Question}
\newtheorem{prob}[thm]{Problem}
\theoremstyle{remark}
\newtheorem{fct}[thm]{Fact}
\newtheorem{defi}[thm]{Definition}
\newtheorem{ex}[thm]{Example}
\numberwithin{equation}{section}
\newcommand*{\R}{\mathbb{R}}
\newcommand*{\N}{\mathbb{N}}
\newcommand*{\U}{\mathcal{U}}
\newcommand*{\V}{\mathcal{V}}
\newcommand*{\D}{\mathcal{D}}
\newcommand*{\Dp}{\mathcal{D}'}
\newcommand*{\E}{\mathcal{E}}
\newcommand*{\Ep}{\mathcal{E}'}
\newcommand\mc{\mathcal}
\DeclareMathOperator{\diam}{diam}
\begin{document}

\title{Linear orders on chainable continua}

\author{Witold Marciszewski}
\address{Institute of Mathematics\\
University of Warsaw\\ Banacha 2\newline 02--097 Warszawa\\
Poland\\
ORCID identifier: 0000-0003-3384-5782}
\email{wmarcisz@mimuw.edu.pl}

\author{Julia Ścisłowska}
\address{Doctoral School of Exact and Natural Sciences UW\\
University of Warsaw\\ Banacha 2\\ 02--097 Warszawa\\
Poland}
\email{j.scislowska@uw.edu.pl}

\author{Benjamin Vejnar}
\address{Faculty of Mathematics and Physics\\
Charles University\\ Sokolovská 49/83 \newline 186 75 Praha 8\\
Czech Republic\\
ORCID identifier: 0000-0002-2833-5385} 
\email{vejnar@karlin.mff.cuni.cz}

\thanks{The second author was supported by the grant funded by the National Science Center ,,Homogeneity and genericity of metric structures: groups, dynamical systems, Banach spaces and C* –  algebras” (UMO-2021/03/Y/ST1/00072). The third author was supported by the grant GACR 24-10705S
}

\subjclass[2020]{54F05, 54F15, 54H05, 54F65}
\keywords{chainable continua, linear orders, non-principal ultrafilters, inverse limit}

\begin{abstract} 
We define and study certain linear orders on chainable continua. Those orders depend on a sequence of chains obtained from definition of chainability and on a fixed non-principal ultrafilter on the set of natural numbers. An alternative method of defining linear orders on a chainable continuum $X$ uses representation of $X$ as an inverse sequence of arcs and fixed non-principal ultrafilter on $\N$. We compare those two approaches.

We prove that there exist exactly $2$ distinct ultrafilter orders on any arc, exactly $4$ distinct ultrafilter orders on the Warsaw sine curve, and exactly $2^{\mathfrak{c}}$ distinct ultrafilter orders on the Knaster continuum. We study the order type of various chainable continua equipped with an ultrafilter order and prove that a chainable continuum $X$ is Suslinean if and only if for every ultrafilter order $\leq_{\U}^{\D}$ on $X$, the space $(X, \leq_{\U}^{\D})$ is order isomorphic to $([0,1],\leq)$.

We study also descriptive complexity of ultrafilter orders on chainable continua. We prove that the existence of closed ultrafilter order characterizes the arc and we show that for Suslinean chainable continua,  any ultrafilter order is both of type $F_{\sigma}$ and $G_{\delta}$. On the other hand, we prove that there is no analytic and no co-analytic ultrafilter order on the Knaster continuum.
\end{abstract}

\maketitle

\section{Introduction}\label{intro}

Chainable continua are well-studied objects in general topology and related fields, such as dimension theory and the theory of dynamical systems. On the one hand they might be described as inverse limits of sequences of arcs, so they resemble an arc, which is a simple space with some ``neat'' properties. However, on the other hand chainable continua might be very complicated, which makes them an excellent source of interesting examples.

The main goal of this paper is to define and study certain linear orders on chainable continua. 

\subsection*{Convention} Throughout the text, we will use the convention that the set of natural numbers is equal to the set of positive integers, i.e. $\N = \{1,2,3,...\}$.

\subsection{The ultrafilter orders on chainable continua}
We consider linear orders on a chainable continuum $X$ (for definitions see Section \ref{chainable}) which depend on sequence of chains obtained from definition of chainability, for sequence $(\varepsilon_n)_{n\in\N}$ tending to zero, and on fixed ultrafilter $\U$ on the set of natural numbers. Every chain defines a natural linear preorder on $X$ and desired linear order on $X$ is an ultraproduct of those orders, modulo filter $\U$.

In our paper we investigate properties of such families of orders and their dependence on choosen sequence of chains and an ultrafilter. An alternative method of defining linear orders on chainable continuum $X$ uses representation of $X$ as an inverse sequence of arcs, orders on those arcs and their ultraproduct.


 Let us mention that the idea of considering ultrafilter orders on chainable continua is due to Jakub Różycki. According to our knowledge, the study of such orders on chainable continua is a new concept, which has not been studied before.

\subsection{Structure of the paper}
Below we outline the structure of our paper.

In Section \ref{definitions} we present two definitions of ultrafilter orders on a given chainable continuum $X$ -- Definition \ref{seq_of_chains} which refers to chainability of $X$, and Definition \ref{inv_lim} which uses representation of $X$ as the inverse limit of arcs. We compare those two approaches and show that if $X$ is homeomorphic to the inverse limit of arcs $\varprojlim(I_i,f_i)_{i=1}^{\infty}$, then every ultrafilter order on $\varprojlim(I_i,f_i)_{i=1}^{\infty}$ generates an ultrafilter order on $X$.

Then, in Section \ref{examples} we prove that there exist exactly $2$ distinct ultrafilter orders on any arc (i.e. space homeomorphic to $[0,1]$), exactly $4$ distinct ultrafilter orders on the Warsaw sine-curve and exactly $\mathfrak{c}$ distinct ultrafilter orders on a particular chainable continuum consisting of infinitely many arcwise connected components, described in Example \ref{forest_of_sines}. We also present an example (Example \ref{t}) showing that arc components of a given chainable continuum might appear in a different order when we consider distinct ultrafilter orders on $X$. 

Section \ref{Suslinean} is devoted to study order type of ultrafilter orders on Suslinean chainable continua. Main result of this part of our paper states that if $X$ is a Suslinean continuum equipped with any ultrafilter order $\leq_{\U}^{\D}$, then that space $(X,\leq_{\U}^{\D})$ has the order type of an interval. We also obtain a new characterization of Suslinean chainable continua - they are exactly those chainable continua, for which the ultrafilter order topology is ccc.

In Section \ref{knaster} we study ultrafilter orders on the Knaster continuum. We prove that there exist exactly $2^{\mathfrak{c}}$ distinct ultrafilter orders on the Knaster continuum. We also describe topological properties of a Knaster continuum equipped with order topology generated by a certain ultrafilter order.

In Section \ref{descriptive}  we prove a new characterization of an arc - it is the only chainable continuum on which there exists a closed ultrafilter order. We show also that for Suslinean chainable continua any ultrafilter order is both of type $F_{\sigma}$ and $G_{\delta}$. Then we present a proof of a theorem that there is no analytic and no co-analytic (so in particular -- no Borel) ultrafilter order on the Knaster continuum.

Finally, in Section 8  we study the relationships between the endpoints and the absolute endpoints of chainable continua and the minimal or maximal points of chainable continua equipped with ultrafilter orders.


\section{Background on chainable continua}\label{chainable}

Recall that a chainable continuum is a compact, connected and metrizable topological space $X$, which satisfies the following property: For a fixed metric $d$, generating topology of $X$, and for every $\varepsilon>0$, $X$ can be covered by a finite chain of open sets $d_1, d_2, ...,d_n$ such that diameter of $d_i$ is smaller that $\varepsilon$ for each $i$ (we say that sequence of sets $d_1, d_2, ...,d_n$ is a chain if for each $i,j \in\{1,...,n\}$ we have $d_i \cap d_j \neq \varnothing \iff |i-j|\leq 1$). The elements of the chain, i.e., the sets $\{d_i\}_{i=1}^{n}$, will be called links. Equivalently, a chainable continuum is an inverse limit of a sequence of arcs.

We will use the convention from Bing's work \cite{bing} -- we will denote chains with uppercase letters (e.g., $D,E,F...$), and links of chains with lowercase letters (e.g., $\{d_i\}_i, \{e_j\}_j, \{f_k\}_k...$).

If $\{D_n\}_{n=1}^{\infty}$ is a sequence of chains in the space $X$, then we will use the symbol $d_{i,j}$ to denote the $i-th$ link in the $j-th$ chain of the sequence $\{D_n\}_{n=1}^{\infty}$, i.e. the $i$-th link of the chain $D_j = \{d_{1,j},...,d_{i-1,j},d_{i,j},d_{i+1,j},...,d_{k_j,j}\}$.

For a metric space $(X,d)$ and for $\mc{A}=\{A_1,...,A_n\}$ being a family of subsets of $X$, we define $mesh(\mc{A})$ as:
    $$mesh(\mc{A}) = \max\{diam(A_i):A_i\in \mc{A}\}.$$

    If $E = \{e_1,...,e_n\}$ is a chain in continuum $X$ and $mesh(E) <\varepsilon$ for a given $\varepsilon>0$, then we say that $E$ is an $\varepsilon$-chain.  Thus, a metric continuum $X$ is chainable if and only if for every $\varepsilon>0$ there exists an $\varepsilon$-chain covering $X$.   

        The following fact will be useful to work with chainable continua.
\begin{fct}
Let $(X,d)$ be a metric continuum. Then the following conditions are equivalent.
\begin{enumerate}
\item $X$ is a chainable continuum.
\item There is an infinite sequence of chains $D_1,D_2,D_3,...$ such that for every $n \in \N $, the chain $D_n$ covers $X$ and $mesh(D_n) \xrightarrow[]{n \to \infty } 0$. \end{enumerate}

\end{fct}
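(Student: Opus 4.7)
The proof plan is to observe that both implications follow essentially immediately from the definitions, using the sequence $\varepsilon_n = 1/n$ (or any sequence tending to zero).

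For the direction (1)$\Rightarrow$(2), I would start from chainability: for every $\varepsilon>0$ there is a chain of open sets covering $X$ with all links of diameter less than $\varepsilon$. Apply this with $\varepsilon=1/n$ for each positive integer $n$ to obtain a $(1/n)$-chain $D_n$ covering $X$. Then by construction $\mathrm{mesh}(D_n)<1/n$, and the sequence $(D_n)_{n\geq 1}$ witnesses condition (2).

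For the converse direction (2)$\Rightarrow$(1), let $\varepsilon>0$ be given. Since $\mathrm{mesh}(D_n)\to 0$, there is some $N\in\N$ with $\mathrm{mesh}(D_N)<\varepsilon$. The chain $D_N$ then covers $X$ and is an $\varepsilon$-chain, which is exactly what is required for chainability.

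There is no real obstacle here: both implications are a direct unpacking of the definitions, and the only mildly non-trivial input is that $\N$ contains arbitrarily large integers (giving arbitrarily small $1/n$) in one direction, and that $\mathrm{mesh}(D_n)\to 0$ implies the existence of some $D_N$ with mesh below any prescribed positive threshold in the other. I would therefore keep the proof to a short paragraph for each direction, with no auxiliary lemmas needed.
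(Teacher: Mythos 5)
Your proof is correct: both directions are exactly the straightforward unpacking of the definitions (instantiate $\varepsilon = 1/n$ one way, extract a single $D_N$ with small mesh the other way), which is precisely why the paper states this as a Fact with no proof at all. Nothing is missing, and your treatment matches the level of the paper's implicit argument.
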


Examples of chainable continua include: any arc (i.e., any homeomorphic image of a closed interval $[0,1]$), the Warsaw sine curve (described in the Example \ref{sinus}), and the Knaster continuum (described in the Section \ref{knaster}).

It is worth noting, however, that the class of chainable continua is much richer – it can be shown that there exists $\mathfrak{c}$ pairwise non-homeomorphic chainable continua \cite{debski}.

Chainable continua have many interesting topological properties: they are a-triodic, hereditary unicoherent, and have the fixed point property. For a more detailed treatment of chainable continua, see the articles
 \cite{bing-2}, \cite{nadler_2_skl} and the monographs \cite{nadler} and \cite{macias}.

\section{Definitions of ultrafilter orders and some of their basic properties}\label{definitions}

\begin{defi}[Linear order on subsets of $X$]\label{arc_comp_order}
    Let $\{A_i: i\in I\}$ be subsets of a set $X$ and let $\leq$ be a linear order on $X$. For $i,j \in I, i\neq j$ we introduce the notation: $$A_i \leq A_j \iff \forall_{y\in A_i} \forall_{z\in A_j}~ y\leq z. $$
\end{defi}

Below we introduce the key definition of this paper.
\begin{defi}\label{seq_of_chains}
Let $X$ be a chainable continuum and let $\mc{D} = \{D_n\}_{n\in \N}$, (where for $n\in \N$, $D_n= \{d_{s,n}\}_{s=1}^{k_n}$) 
be a sequence of chains covering $X$, such that 
$mesh(D_n) \xrightarrow[]{n \to \infty } 0$. Let $\U$ be a non-principal ultrafilter on $\N$. Then we can compare any two points ${x,y} \in X$ in the sense of the \textbf{ultrafilter order $\leq_{\U}^{\mc{D}} $} on $X$, which we define as follows:
$$ x\leq_{D_n} y \iff 
\exists_{~i\leq j \leq k_n} ~ x\in d_{i,n},~ y\in d_{j,n},  $$
$$ x\leq_{\U}^{\mc{D}} y \iff \{n\in \N: x\leq_{D_n} y\} \in \U.  $$
Note that the order $ \leq_{\U}^{\mc{D}}$ is an ultraproduct (with respect to the ultrafilter $\U$) of the family of relations $\leq_{D_n}, n\in \N$.

We consider also strict inequality:
$$x <_{\U}^{\D} y \iff x\leq_{\U}^{\D}y \wedge x\neq y.$$  
\end{defi}

Below we check that relation $ \leq_{\U}^{\mc{D}}$ is a linear order on $X$. \begin{itemize}
    \item Reflexivity is obvious.
    \item Transitivity: Suppose that $x\leq_{\U}^{\mc{D}} y$ and $ y\leq_{\U}^{\mc{D}} z$. Then $\{n\in \N: x \leq_{D_n} y\} \in \U $ and $\{n\in \N: y \leq_{D_n} z\} \in \U$, hence $\{n\in \N: x \leq_{D_n} y\} \cap \{n\in \N: y \leq_{D_n} z\} \in \U$. Since $\{n\in\N: x \leq_{D_n} z\} \supseteq \{n\in \N: x \leq_{D_n} y\} \cap \{n\in \N: y \leq_{D_n} z\} \in \U$, we have $x\leq_{\U}^{\mc{D}} z$.
    \item Antisymmetry and linearity follows from the following property:
    \item for any pair of distinct points $x,y\in X$, exactly one of the following two conditions holds: either $x\leq_{\U}^{\mc{D}} y$ or $y\leq_{\U}^{\mc{D}} x$.
    
     Let $\varepsilon = d(x,y)/2$ and let $A = \{n\in \N: mesh(D_n)< \varepsilon\}$. Then $A\in \U$, since $A$ is a cofinite set. Clearly, for $n\in A$ and $d_{i,n}, d_{j,n} \in D_n$ such that $x\in d_{i,n}, y\in d_{j,n}$ we have $d_{i,n}\cap d_{j,n} = \emptyset$, in particular, $i\ne j$.
    Therefore the sets $M^n_x = \{i: x\in d_{i,n}\}$ and  $M^n_y = \{j: y\in d_{j,n}\}$
    are disjoint. Since each of the sets $M^n_x, M^n_y$ is either a  singleton or consists of a pair of consecutive numbers, we have that either $M^n_x < M^n_y$ or  $M^n_x > M^n_y$. Hence, for each $n\in A$, exactly one of the following two conditions holds: either $x\leq_{D_n} y$ or $y\leq_{D_n} x$. 
    
    Let 
    \[A_1 = \{n\in A: x\leq_{D_n} y\}\quad \mbox{and}\quad A_2 = \{n\in A: y\leq_{D_n} x\}.\]
    Since $A = A_1\cup A_2$ and $A_1\cap A_2 = \emptyset$, exactly one of the sets $A_1, A_2$ belongs to $\mc{U}$. This obviously implies the required property of the ultrafilter order. 
\end{itemize} 

Note that Definition \ref{seq_of_chains} depends on a fixed sequence of chains $\mc{D} = \{D_n\}_{n\in \N}$ and on a fixed non-principal ultrafilter $\U$ on $\N$. This means that different choices of a sequence of chains covering $X$ or a non-principal ultrafilter on $\N$ can generate different orders on the continuum $X$.

Note that any homeomorphism $h: X\to Y$ between chainable continua, being uniformly continuous,  transfers ultrafilter orders on $X$ onto ultrafilter orders on $Y$.

In \cite[Theorem 12.19]{nadler}, \cite[Theorem 2.4.22]{macias} and \cite[Chapter 1.12]{inverse} one can find a proof of a classical theorem, characterizing chainable continua.
\begin{thm}
    A continuum $X$ is a chainable continuum if and only if it is homeomorphic to an inverse limit of a sequence of arcs, i.e. a space $\varprojlim (X_i,f_i)_{i=1}^{\infty}$ for $X_i = [0,1]$.
\end{thm}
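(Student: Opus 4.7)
The plan is to prove both implications separately. For the easier direction, assume $X = \varprojlim(X_i, f_i)_{i=1}^{\infty}$ with each $X_i = [0,1]$ and denote by $\pi_i \colon X \to X_i$ the canonical projections. I would equip $X$ with the standard inverse-limit metric $d((x_i), (y_i)) = \sum_i 2^{-i}|x_i - y_i|$. Given $\varepsilon > 0$, first choose $N$ with $\sum_{i>N} 2^{-i} < \varepsilon/2$, and then invoke uniform continuity of each bonding composition $f_j \circ \cdots \circ f_{N-1}$ (for $j < N$) to extract $\delta > 0$ such that any two points of $X$ whose $N$-th coordinates differ by less than $\delta$ are within $\varepsilon$. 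Cover $[0,1]$ by an open chain $E = \{e_1, \ldots, e_m\}$ of mesh less than $\delta$ and pull it back to the family $\{\pi_N^{-1}(e_j)\}_{j=1}^m$ in $X$: preimages preserve chain incidence, surjectivity of $\pi_N$ guarantees each link is nonempty, and the diameter estimate gives mesh less than $\varepsilon$.

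For the converse, fix a chainable continuum $X$ and, by the Fact above, a sequence $\{D_n\}_{n\in\N}$ of chains covering $X$ with $\operatorname{mesh}(D_n) \to 0$. The first step is to arrange, after passing to a subsequence, that each $D_{n+1}$ refines $D_n$ in a strong \emph{consecutive} sense: the closure of every link of $D_{n+1}$ is contained in a single link of $D_n$, and the assignment $\phi_n$ sending the index of a link of $D_{n+1}$ to the index of a containing link of $D_n$ is weakly monotone. This follows from comparing the Lebesgue number of $D_n$ with a sufficiently small mesh of $D_{n+1}$, together with a bit of bookkeeping to straighten out any backtracking in $\phi_n$.

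Next, associate to each $D_n$ an arc $X_n := [1, k_n]$ divided into $k_n$ unit intervals labelled by the links of $D_n$, and use $\phi_n$ to define a piecewise-linear surjection $f_n \colon X_{n+1} \to X_n$ that sends the $j$-th subinterval onto the $\phi_n(j)$-th. Using a partition of unity $\{\varphi_{i,n}\}_{i=1}^{k_n}$ subordinate to $D_n$, I would set $g_n(x) = \sum_{i=1}^{k_n} i \cdot \varphi_{i,n}(x)$ and verify $f_n \circ g_{n+1} = g_n$. The resulting family induces a map $g \colon X \to \varprojlim(X_n, f_n)$, continuous by construction, injective because $g(x) = g(y)$ forces $x$ and $y$ to share a common link of every $D_n$ (whence $d(x,y) = 0$), and surjective by a compactness/diagonal argument. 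A continuous bijection between compact Hausdorff spaces is a homeomorphism.

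The main obstacle is the combinatorial bookkeeping: arranging the consecutive refinement cleanly and verifying that $f_n \circ g_{n+1} = g_n$ holds at points lying in overlaps of adjacent links of $D_{n+1}$. The individual checks are elementary but notationally heavy, which is why the references cited just before the theorem statement --- \cite{nadler}, \cite{macias}, and \cite{inverse} --- handle the details rather than our having to reproduce them.
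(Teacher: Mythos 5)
Your first direction (inverse limit of arcs $\Rightarrow$ chainable) is essentially sound: the tail estimate plus uniform continuity of the finite compositions correctly gives that two points with close $N$-th coordinates are close, and pulling back a fine chain on $[0,1]$ through $\pi_N$ works; the only caveat is that $\pi_N$ is surjective only when the bonding maps are surjective (otherwise its image is a subinterval and one must restrict the chain to it --- a fixable point). The converse direction, however, contains a fatal error: the claim that, after passing to a subsequence, the index assignment $\phi_n$ (sending each link of $D_{n+1}$ to a containing link of $D_n$) can be made \emph{weakly monotone} by ``bookkeeping to straighten out any backtracking.'' This is impossible in general, and the impossibility is exactly the point of the theorem. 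If every $\phi_n$ were weakly monotone, your piecewise-linear bonding maps $f_n$ would all be monotone surjections, and an inverse limit of arcs with monotone surjective bonding maps is homeomorphic to $[0,1]$ --- precisely the fact this paper itself invokes later (\cite[Corollary 2.1.14]{macias}, \cite[Corollary 12.6]{nadler}). Your construction would therefore ``prove'' that every chainable continuum is an arc, contradicting the Warsaw sine curve and the Knaster continuum, both chainable and neither an arc. The folding of $D_{n+1}$ inside $D_n$ cannot be straightened: for the Knaster continuum the refinement pattern is essentially the tent map, which is two-to-one and not monotone, and this crookedness is what the inverse-limit representation must record.

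There is a second, independent gap: the exact compatibility $f_n \circ g_{n+1} = g_n$ for your partition-of-unity maps $g_n(x) = \sum_i i\cdot\varphi_{i,n}(x)$ is a rigid identity that a generic choice of partitions of unity will simply not satisfy, and it cannot be delegated to the cited references, because they do not argue this way. The paper itself gives no proof of this theorem --- it cites \cite[Theorem 12.19]{nadler}, \cite[Theorem 2.4.22]{macias}, and \cite[Chapter 1.12]{inverse} --- and those proofs take a genuinely different route: chainability is first recharacterized by the existence, for every $\varepsilon>0$, of an $\varepsilon$-map of $X$ onto $[0,1]$, and then one applies an approximation theorem for inverse limits (Morton Brown's theorem on almost-commuting diagrams) in which the relevant squares commute only up to controlled error, never exactly. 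So the two steps you flag as ``notationally heavy but elementary'' are in fact the places where your argument breaks: one (monotonization) is false, and the other (exact commutativity) is avoided, not carried out, in the literature.
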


It is natural to study the relations between the family of ultrafilter orders on a given chainable continuum $X$ and the family of ultrafilter orders on the inverse limit $\varprojlim (X_i,f_i)_{i=1}^{\infty}$ which is homeomorphic to the space $X$. For this purpose, we will introduce the following definition.

\begin{defi}\label{inv_lim}
     Let $\varprojlim (I_i,f_i)_{i=1}^{\infty}$ be an inverse limit of compact intervals. Let $\U$ be a non-principal ultrafilter on $\N$. Then we define the
     \textbf{limit-ultrafilter order} $\leq_{\U}^{(I_i,f_i)_{i=1}^{\infty}}$ on the space $\varprojlim (I_i,f_i)_{i=1}^{\infty}$ by:

    \[ x\leq_{\U}^{(I_i,f_i)_{i=1}^{\infty}} y \iff \{i\in \N: x_i\leq y_i\} \in \U,\]
    for $x,y \in \varprojlim (I_i,f_i)_{i=1}^{\infty}$.
\end{defi}


Similarily as before, limit-ultrafilter order on an inverse limit is also a linear order.
In the rest of this subsection we prove that every limit-ultrafilter order is an ultrafilter order.

\begin{defi}
Let $(X,d)$ be a metric space. We say that $f : X \to [0,1]$ is an \textbf{$\varepsilon$-map} if $f$ is a continuous surjection and for every $t\in [0,1]$ $diam(f^{-1}(t))<\varepsilon$.
\end{defi}

We will use an easy-to-prove lemma from book \cite{macias}.
\begin{lem}[Lemma 2.4.20 in \cite{macias}]\label{macias}
Let $(X,d)$ and $(Y,d')$ be compact metric spaces. Let $\varepsilon>0$. If $f:X \to Y$ is an $\varepsilon-$map, then there exists $\delta >0$ such that $diam(f^{-1}(U))< \varepsilon$ for any $U \subseteq Y$ such that $diam(U)< \delta$.
    
\end{lem}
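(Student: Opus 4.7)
The plan is to argue by contradiction using sequential compactness of $X$.

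\smallskip

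First I would negate the conclusion. Suppose that for every $\delta > 0$ there exists $U \subseteq Y$ with $\operatorname{diam}(U) < \delta$ yet $\operatorname{diam}(f^{-1}(U)) \geq \varepsilon$. Applying this with $\delta = 1/n$ for each positive integer $n$, I obtain a sequence of sets $U_n \subseteq Y$ with $\operatorname{diam}(U_n) < 1/n$ and $\operatorname{diam}(f^{-1}(U_n)) \geq \varepsilon$. Since the diameter is the supremum of pairwise distances, for each $n$ I can select points $x_n, y_n \in f^{-1}(U_n)$ with
\[
d(x_n, y_n) \geq \varepsilon - \tfrac{1}{n}.
\]

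\smallskip

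Next I would use compactness of $X$ to pass to convergent subsequences. Because $X$ is a compact metric space, after passing to a subsequence I may assume $x_n \to x$ and $y_n \to y$ for some $x, y \in X$. Taking the limit in the inequality above gives $d(x,y) \geq \varepsilon$, so in particular $x \neq y$.

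\smallskip

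The crucial step is to show $f(x) = f(y)$, which will contradict the $\varepsilon$-map hypothesis. By continuity of $f$, $f(x_n) \to f(x)$ and $f(y_n) \to f(y)$. But $f(x_n), f(y_n) \in U_n$, and $\operatorname{diam}(U_n) < 1/n$, hence
\[
d'(f(x_n), f(y_n)) < \tfrac{1}{n} \xrightarrow[n \to \infty]{} 0.
\]
Passing to the limit, $d'(f(x), f(y)) = 0$, so $f(x) = f(y) =: y_0$. Then $x, y \in f^{-1}(y_0)$, which yields
\[
\operatorname{diam}\bigl(f^{-1}(y_0)\bigr) \geq d(x,y) \geq \varepsilon,
\]
contradicting the assumption that $f$ is an $\varepsilon$-map, i.e.\ that $\operatorname{diam}(f^{-1}(t)) < \varepsilon$ for every $t \in Y$.

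\smallskip

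There is no real obstacle in this proof: it is a standard compactness argument. The only minor subtlety is that the sets $U_n$ (and hence $f^{-1}(U_n)$) need not be closed, so the diameters are suprema that may not be attained; this is handled by choosing witnesses $x_n, y_n$ within $1/n$ of the supremum, which is still enough to produce, in the limit, two distinct points mapped to the same value of $f$.
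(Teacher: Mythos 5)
Your proof is correct. Note that the paper does not actually prove this lemma---it is quoted from Maci\'as's book (Lemma 2.4.20 in \cite{macias}) and used as a black box, so there is no internal argument to compare against; your contradiction-plus-sequential-compactness argument is the standard proof of this fact. All the steps check out: choosing witnesses $x_n, y_n \in f^{-1}(U_n)$ within $1/n$ of the supremal diameter, passing to convergent subsequences, and concluding $f(x)=f(y)$ from $d'(f(x_n),f(y_n)) \le \operatorname{diam}(U_n) < 1/n$, which produces a fiber of diameter at least $\varepsilon$ and contradicts the $\varepsilon$-map hypothesis.
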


We also need one more observation (see \cite[proof of Theorem 2.13]{nadler}).

\begin{lem}\label{pze}
    Let $\varprojlim(I_i,f_i)_{i=1}^{\infty} $ be an inverse limit of a sequence of compact intervals. For $n \in \N$ let $p_n: \varprojlim(I_i,f_i)_{i=1}^{\infty} \to I_n$ be the $n$-th projection and let $\gamma_n = \sup\{diam(p_n^{-1}(t)):t\in I_n\} $.
    Then $\gamma_n\xrightarrow[]{n \to \infty } 0$.
\end{lem}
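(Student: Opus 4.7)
The plan is to argue by contradiction, relying on compactness of $\varprojlim(I_i,f_i)_{i=1}^{\infty}$ together with the fact that the coordinate projections $p_n$ separate points.

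Suppose for contradiction that $\gamma_n$ does not converge to $0$. Then there exist $\varepsilon > 0$ and an increasing sequence $n_1 < n_2 < \dots$ such that, for each $k$, one can find $t_k \in I_{n_k}$ together with points $x^k, y^k \in p_{n_k}^{-1}(t_k)$ with $d(x^k, y^k) \geq \varepsilon$, where $d$ is the chosen metric on the inverse limit. By compactness of $\varprojlim(I_i,f_i)_{i=1}^{\infty}$, I may pass to a subsequence and assume $x^k \to x$ and $y^k \to y$; then $d(x,y) \geq \varepsilon$, so in particular $x \neq y$.

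The key observation is that if $u, v \in \varprojlim(I_i,f_i)_{i=1}^{\infty}$ satisfy $p_n(u) = p_n(v)$, then also $p_m(u) = p_m(v)$ for every $m \leq n$, because the coordinates below $n$ are determined by the $n$-th coordinate via the bonding maps, $u_m = f_m \circ f_{m+1} \circ \dots \circ f_{n-1}(u_n)$. Applied to the pairs $(x^k, y^k)$, this gives $p_m(x^k) = p_m(y^k)$ for every $m \leq n_k$. Fixing an arbitrary $m$ and letting $k \to \infty$ (so eventually $n_k \geq m$), continuity of $p_m$ yields $p_m(x) = p_m(y)$ for every $m \in \N$. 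Since the family of projections $\{p_m\}_{m \in \N}$ separates points on the inverse limit, this forces $x = y$, contradicting $d(x,y) \geq \varepsilon$.

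I do not foresee any real obstacle here; the whole argument is a standard compactness plus separation-of-points argument. If one prefers an explicit quantitative proof, one could instead work with the concrete metric $d(x,y) = \sum_{i=1}^{\infty} 2^{-i} |x_i - y_i|$ on $\varprojlim(I_i,f_i)_{i=1}^{\infty}$: any two points in a fiber $p_n^{-1}(t)$ agree in coordinates $1, \dots, n$, hence lie within distance $\sum_{i > n} 2^{-i} = 2^{-n}$, giving directly the explicit bound $\gamma_n \leq 2^{-n} \to 0$.
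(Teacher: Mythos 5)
Your proof is correct. Note that the paper does not actually prove Lemma \ref{pze} at all: it is stated as an observation with a pointer to the proof of Theorem 2.13 in Nadler's book, and the argument there is essentially your closing quantitative remark --- points of a fiber $p_n^{-1}(t)$ agree in all coordinates $1,\dots,n$ (since $x_m = f_m\circ\dots\circ f_{n-1}(x_n)$ for $m\le n$), hence lie within $2^{-n}$ of each other in the standard product metric. Your main argument (contradiction, compactness of the inverse limit, and the fact that the projections separate points) is a genuinely different and equally valid route; its advantage is that it is metric-agnostic, whereas the quantitative version proves $\gamma_n\le 2^{-n}$ only for the particular metric $\sum_i 2^{-i}|x_i-y_i|$, and to transfer the conclusion to an arbitrary compatible metric one must invoke uniform equivalence of metrics on a compact space (a point you should make explicit if you present that version as the proof). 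Two trivial touch-ups to your main argument: from $\gamma_{n_k}\ge\varepsilon$ you can only extract $x^k,y^k\in p_{n_k}^{-1}(t_k)$ with $d(x^k,y^k)>\varepsilon/2$, not $\ge\varepsilon$ (the supremum need not be attained), which of course changes nothing; and one should recall that the inverse limit is compact because it is a closed subspace of the product $\prod_i I_i$, so the extraction of convergent subsequences is justified.
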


\begin{thm}\label{transfer}
Every limit-ultrafilter order on a chainable continuum is an ultrafilter order.
\end{thm}

 \begin{proof} 
   Let $Y = \varprojlim(I_i,f_i)_{i=1}^{\infty}$ be the inverse limit of a sequence of closed intervals $I_i = I$. 

Let $d$ be a metric generating the topology of $Y$. 
Let $p_n \colon \varprojlim(I_i,f_i)_{i=1}^{\infty}\to I_n$ be the projection onto the $n$-th coordinate and for $n\geq 1$ define the numbers $\gamma_n$ as in Lemma $\ref{pze}$. We put  $\varepsilon_n = \gamma_n + \frac{1}{n}$. Then, for every $n$, $p_n$ is a $\varepsilon_n$-map and, by Lemma \ref{pze}, $\lim_{n\to \infty}\varepsilon_n = 0$. Applying Lemma \ref{macias} for $\varepsilon = \varepsilon_n$ and $f=p_n$, we obtain $\delta_n>0$ such that if $U\subseteq I_n$ and $diam(U)<\delta_n$, then $diam(p_n^{-1}(U))<\varepsilon_n$.

Let $E_n = \{e_{1,n},...,e_{k_n,n} \}$ be a chain of open intervals of diameter less than $\delta_n$, enumerated according to the standard order on $[0,1]$, covering $I_n$. Let $D_n = \{p_n^{-1}(e_{i,n})\}_{i=1}^{k_n}$. By our choice of $\delta_n$, we have  $mesh(D_n)  < \varepsilon_n$, hence $mesh(D_n)\xrightarrow[]{n \to \infty } 0$. 

Take any pair $x=(x_i),y=(y_i)$ of distinct points of $Y$ and put $\varepsilon = d(x,y)/2$. Arguing in a similar way as in the proof of the properties of an ultrafilter order $\leq_{\U}^{\mc{D}}$, one can easily verify that if $mesh(D_n)  < \varepsilon$ then
\[x_n \le y_n \Longleftrightarrow x \le_{D_n} y\,.\]
Since  $mesh(D_n)  < \varepsilon$ for cofinitely many $n$, we conclude that
\[x \leq_{\U}^{(I_i,f_i)_{i=1}^{\infty}} y \Longleftrightarrow x \leq_{\U}^{\mc{D}} y\]
for every non-principal ultrafilter $\U$ on $\N$.    
\end{proof}

Thus we have proved that any limit-ultrafilter order on a chainable continuum is an ultrafilter order. However we don't know if the converse holds, see Question \ref{two_approaches}.

\section{Examples of ultrafilter orders on simple chainable continua}\label{examples}

\subsection{Ultrafilter orders on an arc}
The main goal of this subsection is to show that on arc, i.e. on any space homeomorphic to the closed interval $[0,1]$, there are exactly two distinct ultrafilter orders - one of them coincides with the natural order $<$, and the second one is opposite to the natural order $<$.

\begin{thm}\label{arc}
    Let $X$ be a chainable continuum, let $\U$ be a non-principal ultrafilter on $\N$, and let $\mc{D} = \{D_n\}_{n\in\N}$ be any sequence of chains covering $X$, such that $mesh(D_n) \xrightarrow[]{n \to \infty } 0$. Let $P$ be the interval $(0,1), (0,1]$, or $[0,1]$.

Assume that $L\subseteq X$ and that $L=h(P)$, where $h: P \to L$ is a homeomorphism.

Let $\leq_{\U}^{\D}$ be the order on $X$ generated by the ultrafilter $\U$ and the sequence of chains $\D$, restricted to $L$.
Let $\leq_{L}$ be a natural order on $L$, that is, an order such that for $x,y \in P$,
\begin{equation}\label{d1}
x \leq y \iff h(x) \leq_{L} h(y).
\end{equation}
Then the orders $\leq_{L}$ and $\leq_{\U}^{\mc{D}}$ either coincide or are opposite to each other, i.e.:$$ ( \forall_{x,y \in [0,1]}~ h(x) \leq_{L} h(y) \iff h(x) \leq_{\U}^{\mc{D}} h(y)) \vee ( \forall_{x,y \in [0,1]}~h(x) \leq_{L} h(y) \iff h(x) \geq_{\U}^{\mc{D}} h(y)). $$
\end{thm}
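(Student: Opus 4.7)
The plan is to show that the sign $\sigma(p,q)\in\{+1,-1\}$, which records whether $\leq_{\U}^{\mc{D}}$ agrees with $\leq_L$ on the unordered pair $\{p,q\}\subseteq L$ with $p\neq q$, is constant on $L$; when this constant is $+1$ the two orders coincide, and when it is $-1$ they are opposite. I would reduce this to the following triple-constancy lemma: for any $a<_L b<_L c$ in $L$, one has $\sigma(a,b)=\sigma(b,c)=\sigma(a,c)$.

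For the lemma, set $t_x:=h^{-1}(x)$ and consider the compact sub-arcs $A_1:=h([t_a,t_b])$ and $A_2:=h([t_b,t_c])$; both lie in $L$ since $P$ is an interval containing $t_a,t_b,t_c$. I would then exploit two observations. First, if $A\subseteq X$ is connected, then $\{i:d_{i,n}\cap A\neq\emptyset\}$ is an interval of integers, because a gap would split $A$ into two disjoint unions of non-adjacent (hence topologically disjoint) links; call the resulting intervals $[\alpha_n^j,\beta_n^j]$ for $A_j$. Second, $h|_{[t_a,t_c]}$ is a homeomorphism between compacta, so $h^{-1}$ is uniformly continuous on $h([t_a,t_c])$, and one may fix $\eta>0$ such that $d(x,y)<\eta$ in $h([t_a,t_c])$ implies $|h^{-1}(x)-h^{-1}(y)|<\min(t_b-t_a,t_c-t_b)$. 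This guarantees that, for every $n$ with $mesh(D_n)<\eta$, no link of $D_n$ can contain $a$ and simultaneously meet $A_2$ (such a link would produce some $h(s)\in A_2$ with $s\geq t_b$ at distance $<\eta$ from $a=h(t_a)$, violating the uniform-continuity bound), and symmetrically no link of $D_n$ contains $c$ and meets $A_1$.

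Combining both observations, for all but finitely many $n$ (those additionally satisfying $mesh(D_n)<\tfrac{1}{3}\min\{d(a,b),d(b,c),d(a,c)\}$) the link-indices $I_n(a),I_n(b),I_n(c)$ are pairwise disjoint, and moreover $I_n(a)\subseteq[\alpha_n^1,\beta_n^1]\setminus[\alpha_n^2,\beta_n^2]$, $I_n(c)\subseteq[\alpha_n^2,\beta_n^2]\setminus[\alpha_n^1,\beta_n^1]$, while $I_n(b)\subseteq[\alpha_n^1,\beta_n^1]\cap[\alpha_n^2,\beta_n^2]$. The case in which one index interval strictly contains the other is excluded because then one of the symmetric differences would be empty, contradicting the existence of a link through $a$ or $c$. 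Hence, depending on whether $\alpha_n^1<\alpha_n^2$ or $\alpha_n^2<\alpha_n^1$, we obtain $a<_{D_n}b<_{D_n}c$ or the reverse chain; since this dichotomy holds on a cofinite (hence $\U$-large) set, exactly one branch wins under $\U$, forcing the three strict relations $a<_{\U}^{\mc{D}}b$, $b<_{\U}^{\mc{D}}c$, $a<_{\U}^{\mc{D}}c$ to share a common sign, and the lemma follows.

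Finally, to propagate the constancy of $\sigma$ from triples to all pairs, I would take any two pairs of distinct points in $L$, order the (at most four) points involved by $\leq_L$, and apply the triple lemma to the (at most two) consecutive triples; the resulting chain of equalities forces any two pairs to share the same value of $\sigma$. I expect the main obstacle to be the careful link-index bookkeeping in the key lemma, and in particular the reliance on uniform continuity of $h^{-1}$ on the compact sub-arc $h([t_a,t_c])$ — this restriction to a compact sub-arc is essential precisely because $L$ itself need not be compact when $P=(0,1)$ or $P=(0,1]$.
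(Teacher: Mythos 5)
Your proof is correct and takes essentially the same approach as the paper: you reduce the statement to triples $a<_L b<_L c$, use connectedness of the two compact subarcs $h([t_a,t_b])$ and $h([t_b,t_c])$ to force, for all but finitely many $n$, the dichotomy $a<_{D_n}b<_{D_n}c$ or $c<_{D_n}b<_{D_n}a$, let the ultrafilter select one branch, and then propagate from triples to pairs --- exactly the paper's scheme via its Lemma~\ref{arc_help} and Lemma~\ref{order_3}. Two cosmetic remarks: your uniform-continuity detour can be replaced by the simpler observation that $d\bigl(a,h([t_b,t_c])\bigr)>0$ since the subarc is compact (this is what the paper's Lemma~\ref{arc_help} uses), and your final propagation step may require three triples rather than two (comparing the pairs $\{w_1,w_4\}$ and $\{w_2,w_3\}$ among $w_1<_Lw_2<_Lw_3<_Lw_4$ needs a triple containing both $w_1$ and $w_4$), though this is trivially repaired.
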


In the proof we will use the following lemma.
\begin{lem}\label{arc_help}
    Let $X$ be a chainable continuum and let $x,y,z \in X$. Let $\mc{D} = \{D_n\}_{n\in\N}$ be any sequence of chains covering $X$, such that $mesh(D_n) \xrightarrow[]{n \to \infty } 0$. Suppose that there exists a continuum $M \subseteq X$ such that $x,y \in M$ and $z\notin M$. Then $$\exists_k~ \forall_{n>k} \neg [(x\leq_{D_n} z \leq_{D_n} y) \vee (y\leq_{D_n} z \leq_{D_n} x)]. $$
\end{lem}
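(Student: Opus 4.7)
The plan is to use the positive distance $\varepsilon := d(z,M) > 0$, available because $M$ is compact and $z \notin M$. I would choose $k$ large enough that $mesh(D_n) < \varepsilon$ for every $n > k$; it will then suffice to rule out, for each such $n$, the combinatorial configuration described in the lemma.

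The main combinatorial step is to show that
$$I_n := \{i \leq k_n : d_{i,n} \cap M \neq \varnothing\}$$
is an interval of consecutive integers. If not, pick $i_0 \notin I_n$ with indices from $I_n$ on both sides of $i_0$. The sets $M \cap \bigcup_{i < i_0} d_{i,n}$ and $M \cap \bigcup_{i > i_0} d_{i,n}$ are relatively open in $M$, each nonempty, and disjoint, since the chain property forces $d_{i,n} \cap d_{j,n} = \varnothing$ whenever $|i-j| \geq 2$. Combined with $d_{i_0, n} \cap M = \varnothing$, they partition $M$ into two nonempty pieces, contradicting the connectedness of $M$.

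Now suppose towards contradiction that for some $n > k$ one has $x \leq_{D_n} z \leq_{D_n} y$; the case $y \leq_{D_n} z \leq_{D_n} x$ is symmetric. Unfolding the definition yields indices $i_x \leq j_z$ and $j'_z \leq l_y$ with $x \in d_{i_x,n}$, $z \in d_{j_z,n} \cap d_{j'_z,n}$, and $y \in d_{l_y,n}$. Writing $I_n = \{a, a+1, \ldots, b\}$, the points $x, y \in M$ force $i_x, l_y \in I_n$, hence $j_z \geq a$ and $j'_z \leq b$. Since $j_z$ and $j'_z$ both index links containing $z$, the chain property gives $|j_z - j'_z| \leq 1$, and a short case check rules out $j_z > b$ and $j'_z < a$ simultaneously (that would give $j_z - j'_z \geq b-a+2 \geq 2$). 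Thus some $j^* \in \{j_z, j'_z\}$ lies in $I_n$, so $d_{j^*, n} \cap M$ contains some point $w$, and then
$$d(z, w) \leq diam(d_{j^*, n}) \leq mesh(D_n) < \varepsilon \leq d(z, w)$$
is a contradiction.

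The main obstacle is the combinatorial claim that $I_n$ forms a block of consecutive integers; this is where connectedness of $M$ enters the argument in an essential way. Once this is in hand, the remaining work is a short index manipulation together with the choice $mesh(D_n) < d(z, M)$, which turns the statement ``$z$ lies between $x$ and $y$ in the chain'' into the impossible conclusion that a link of small mesh simultaneously contains $z$ and meets $M$.
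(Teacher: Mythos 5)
Your proof is correct and rests on exactly the same ingredients as the paper's: the positive distance $\varepsilon = d(z,M)$, the choice of $k$ so that $mesh(D_n) < \varepsilon$ for $n > k$, the chain property that links with indices differing by at least $2$ are disjoint, and a separation-by-connectedness argument over the chain. The only difference is organizational --- the paper removes the link containing $z$ and concludes from connectedness that $M$ lies entirely in $\bigcup_{j<i} d_{j,n}$ or in $\bigcup_{j>i} d_{j,n}$, whereas you remove a hypothetical gap link to show that the indices of links meeting $M$ form a block of consecutive integers; your version even spells out more explicitly than the paper the index bookkeeping needed when $z$ lies in two adjacent links.
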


\begin{proof}
    Let $\varepsilon = d(z,M)>0.$ We know that since $mesh(D_n) \xrightarrow[]{n \to \infty } 0$, then there exists $k\in \N$ such that for every $n>k~ mesh(D_n)< \frac{\varepsilon}{2}$. Let $n > k$. Take $i$ such that $z\in d_{i,n}$, then $d_{i,n} \cap M \neq \varnothing$. Let $U= \bigcup_{j<i}d_{j,n}$ and let $V= \bigcup_{j>i}d_{j,n}$. Since $D_n$ is a chain, then $U\cap V = \varnothing$. The sets $U$ and $V$ are open and $M \subseteq U \cup V$. From the connectedness of $M$ we know that $M\subseteq U$ or $M \subseteq V$, which implies desired condition.
\end{proof}

We will also use the following lemma, which is easy to prove.
\begin{lem}\label{order_3}
Let $X$ be a set and let $\leq_1$ and $\leq_2$ be linear orders on X. If for every triplet of points $x,y,z \in X$ the following holds:
\begin{equation}
x \leq_1 y \leq_1 z \Longrightarrow (x \leq_2 y \leq_2 z ) \vee (x \geq_2 y \geq_2 z),
\end{equation}
then the following holds:
\begin{equation}
( \forall_{x,y \in X}~ x \leq_{1} y \iff x \leq_{2} y) \vee (\forall_{x,y \in X}~ x \leq_{1} y \iff x \geq_{2} y).
\end{equation}
\end{lem}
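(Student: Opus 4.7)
My plan is as follows. First, assume $|X| \geq 2$ (otherwise both alternatives hold vacuously) and fix any $a, b \in X$ with $a <_1 b$. By trichotomy under $\leq_2$ either $a <_2 b$ or $b <_2 a$. The second case reduces to the first by replacing $\leq_2$ with its reverse: the hypothesis is symmetric under this replacement (its two disjuncts merely swap roles), and a proof that $\leq_1$ coincides with the replaced order yields the \emph{opposite} alternative of the conclusion for the original $\leq_2$. So I may assume $a <_2 b$ and set out to prove $\leq_1 = \leq_2$.

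The first main step is a \emph{localisation lemma}: for every $x \in X \setminus \{a, b\}$ the position of $x$ relative to the pair $\{a, b\}$ is the same under both orders. Trichotomy under $\leq_1$ places $x$ in exactly one of the three slots $x <_1 a$, $a <_1 x <_1 b$, or $b <_1 x$; I apply the hypothesis to the corresponding ordered triple $(x, a, b)$, $(a, x, b)$, or $(a, b, x)$. Each application produces two disjuncts, one of which would force $a \geq_2 b$; since $a <_2 b$, that disjunct is excluded, and the surviving one places $x$ in the analogous $\leq_2$-slot.

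The second step extends this from anchor comparisons to arbitrary pairs. Given $p, q \in X$ with $p <_1 q$, I split into cases according to which slots $p$ and $q$ occupy under $\leq_1$. When $p$ and $q$ lie in different slots (or when one of them is $a$ or $b$), localisation chains directly into $p <_2 q$ via the anchor relation $a <_2 b$. The nontrivial cases are when $p$ and $q$ lie strictly in the same slot; here I apply the hypothesis to a triple formed with one of the anchors: for $p <_1 q <_1 a$, the triple $(p, q, b)$ combined with $q <_2 b$ (from localisation) rules out the reversed disjunct; for $a <_1 p <_1 q$, the triple $(a, p, q)$ combined with $a <_2 p$ does the same; and the case $b <_1 p <_1 q$ is symmetric. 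In every subcase the surviving disjunct delivers $p <_2 q$.

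The main obstacle is the bookkeeping in the last step: one must check several subcases and in each reduce to a hypothesis application whose reversed disjunct can be eliminated using either the anchor relation $a <_2 b$ or an appropriate consequence of localisation. Once all cases are settled we have $\leq_1\, \subseteq\, \leq_2$ as relations on $X$, and since both are linear orders on the same set this forces $\leq_1\, =\, \leq_2$, finishing the proof.
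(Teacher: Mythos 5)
Your proposal is correct. Note that the paper itself offers no proof of this lemma --- it is stated as ``easy to prove'' and left to the reader --- so there is no argument of the paper to compare yours against; what you have done is supply the missing proof. Your structure is sound: the reduction to the case $a <_2 b$ is legitimate because the hypothesis is invariant under replacing $\leq_2$ by its reverse (the two disjuncts merely swap), and proving coincidence with the reversed order yields exactly the second alternative of the conclusion. The localisation step works as claimed: in each of the three slots the reversed disjunct of the hypothesis forces $a \geq_2 b$, which the anchor relation excludes. The pairwise extension is also complete: cross-slot pairs chain through $a$ or $b$ by transitivity, and for same-slot pairs ($p <_1 q <_1 a$ via the triple $(p,q,b)$ and $q <_2 b$; $a <_1 p <_1 q$ via $(a,p,q)$ and $a <_2 p$) the reversed disjunct is again eliminated by a localisation consequence. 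Finally, your closing observation is the right one: a linear order contained (as a relation) in another linear order on the same set must equal it, since $p \leq_2 q$ together with $p \not\leq_1 q$ would give $q <_1 p$, hence $q <_2 p$, hence $p = q$ by antisymmetry, a contradiction. This is a clean and complete way to discharge the claim the paper takes for granted.
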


We now present the proof of Theorem $\ref{arc}.$
\begin{proof}
Let $x,y,z \in P$ and let $h: P \to L$ be a homeomorphism. Assume that $x \leq y \leq z$, where $\leq$ is a standard order on $P$. Let $a = h(x), b = h(y), c = h(z).$
From condition \ref{d1}, we know that $a \leq_{L} b \leq_{L} c $.

Let $A = h([x,y]), B = h([y,z])$. Then $A,B \subseteq L$ and $A,B$ are homeomorphic to a closed interval (as continuous and nondegenerate images of a closed interval, contained in $L$).

We know that $a,b \in A$ and $c \notin A$, so by Lemma \ref{arc_help} there exists $k_1$ such that for every $n>k_1$ we have $$\neg [(a\leq_{D_n} c \leq_{D_n} b) \vee (b\leq_{D_n} c \leq_{D_n} a)].$$ 

We also know that $b,c \in B$ and $a \notin B$, so by Lemma \ref{arc_help} there exists $k_2$ such that for every $n>k_2$ we have: $$\neg [(b\leq_{D_n} a \leq_{D_n} c) \vee (c\leq_{D_n} a \leq_{D_n} b)].$$

Thus, for $n > \max\{k_1,k_2\}$ we have $a \leq_{D_n} b \leq_{D_n} c$ or $c \leq_{D_n} b \leq_{D_n} a$. We also know that $\{n\in \N:n > \max\{k_1,k_2\} \} \in \U$ (since cofinite sets belong to a non-principal ultrafilter).

Therefore, we have proven that
$$ a \leq_L b \leq_L c \Longrightarrow (a \leq_{\U}^{\mc{D}} b \leq_{\U}^{\mc{D}} c ) \vee (a \geq_{\U}^{\mc{D}} b \geq_{\U}^{\mc{D}} c).$$

By Lemma \ref{order_3}, this completes the proof.

\end{proof}

We also obtain the following corollary.

\begin{cor}\label{non-mixing}
Let $T$ be an arc component of a chainable continuum $X$, $x,y \in T$ and $z\in X\setminus T$. 
Then $z$ is not in the $\leq_{\U}^{\D}$-interval between $x$ and $y$.
Thus, all arc components are $\leq_{\U}^{\D}$-convex.
\end{cor}

\begin{proof}
Since $x$ and $y$ are in the same arc component of $X$, we can connect them with an arc that does not contain $z$. Then we use Lemma \ref{arc_help}.
\end{proof}


\subsection{Ultrafilter orders on continua $S_1,S_2$ and $S_3$}


\begin{ex}\label{sinus}
    Let $S_1$ be the Warsaw sine curve, i.e. the chainable continuum defined as follows: $S_1 = \overline{X}$, where $$X = \{(x, \sin(\frac{1}{x})): x \in (0,\frac{2}{3\pi}]\}
.$$
\centerline {\includegraphics[width=7cm, ]{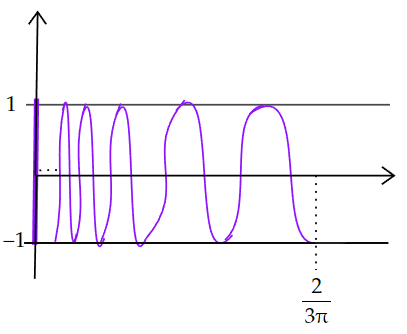} }

\end{ex}

\begin{thm}
    There exist exactly four distinct ultrafilter orders on $S_1$.
\end{thm}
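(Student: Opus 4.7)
The plan is to show that an ultrafilter order on $S_1$ is determined by a choice of direction on each of the two arc components, subject to a correlation between the direction on the sine graph and the relative positioning of the two components. This yields exactly $2\times 2 = 4$ orders.

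The curve $S_1$ has exactly two arc components: the sine graph $G = \{(x, \sin(1/x)) : x \in (0, 2/3\pi]\}$ (homeomorphic to $(0,1]$) and the limit bar $B = \{0\}\times[-1,1]$. By Theorem~\ref{arc}, any ultrafilter order $\leq_{\U}^{\D}$ restricted to each of $G$, $B$ must agree with the natural arc order or its reverse, giving $2\times 2 = 4$ internal configurations. Corollary~\ref{non-mixing}, combined with a short uniformity argument (if two bar points lay on opposite sides of some $g \in G$, then $g$ would be strictly between them, contradicting the corollary), forces the arc components to be totally separated in $\leq_{\U}^{\D}$: either $G <_{\U}^{\D} B$ everywhere or $B <_{\U}^{\D} G$ everywhere. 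This leaves $4 \times 2 = 8$ potential orders.

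The central step is the correlation: $\leq_{\U}^{\D}$ is natural on $G$ if and only if $G <_{\U}^{\D} B$. Suppose for contradiction that the restriction to $G$ is natural but $B <_{\U}^{\D} G$. Let $p_0 = (2/3\pi, -1)$ be the endpoint (and minimum) of $G$, choose $x_0 \in (0, 2/3\pi)$ small, let $q = (x_0, \sin(1/x_0)) \in G$, and pick any $b \in B$. The assumptions force $b <_{\U}^{\D} p_0 <_{\U}^{\D} q$. Now consider the continuum
\[
T = \{(x, \sin(1/x)) : 0 < x \leq x_0\} \cup \bigl(\{0\}\times[-1,1]\bigr),
\]
which is connected, being the closure of the connected tail of the sine graph; it contains $q$ and $b$ but not $p_0$ (since $2/3\pi > x_0$). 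Applying Lemma~\ref{arc_help} with $(x,y,z) = (q,b,p_0)$ and $M = T$ gives a cofinite set of indices $n$ for which $p_0$ is not between $q$ and $b$ in $D_n$; this set lies in $\U$, contradicting $b <_{\U}^{\D} p_0 <_{\U}^{\D} q$. The reverse implication is symmetric, so we reduce to at most $4$ orders.

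To finish, I would exhibit four sequences of chains realizing each configuration. A chain of sufficiently small mesh on $S_1$ must have endpoints among $\{(2/3\pi, -1), (0,1), (0,-1)\}$ (any other choice would force the chain to fail covering the branches sprouting from its initial link), leaving two geometric chain-types and their reverses. Explicitly, each such chain can be built from small open balls along the non-oscillating part of the sine graph, transitioning at a peak (respectively a valley) into horizontal $\varepsilon$-strips that cover the densely-oscillating tail and the bar simultaneously; terminating on the bar at $(0,1)$ or at $(0,-1)$ produces the two geometric types. For any non-principal ultrafilter, these four sequences realize the four orders. The main obstacle is the correlation step: one needs to locate the right continuum $T$, and the fact that the bar is the limit set of the sine-graph tail is precisely what makes such a $T$ available.
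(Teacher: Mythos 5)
Your proposal is correct, and its skeleton matches the paper's: Theorem \ref{arc} pins down the possible restrictions of an ultrafilter order to the two arc components, and four explicit sequences of chains (two geometric types plus their reversals, which are exactly the paper's $\D$, $\D'$, $\E$, $\E'$) realize the four orders. The genuine difference is your correlation step, and it is an improvement. The paper passes directly from ``each component carries two possible restricted orders'' to ``at most four orders on $S_1$,'' which tacitly assumes that an ultrafilter order is determined by its restrictions to the components; a priori the relative position of the graph $G$ and the bar $B$ is an extra bit of data (Corollary \ref{non-mixing} only yields that they sit as two blocks), so the restriction argument alone gives $8$ candidates, not $4$. Your argument --- applying Lemma \ref{arc_help} to the continuum $T = \{(x,\sin(1/x)) : 0 < x \le x_0\} \cup (\{0\}\times[-1,1])$ and the triple $(q,b,p_0)$ --- shows that the orientation of $G$ forces which block comes first, and this is precisely what closes that gap; it is also the same device the paper itself uses for $S_2$, where Lemma \ref{arc_help} excludes two of the four point-configurations. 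Two small remarks: your parenthetical ``uniformity argument'' for block separation treats only bar points on opposite sides of a single graph point, and you also need the symmetric case (a bar point strictly between two graph points, when different graph points would see $B$ on different sides) --- both are instances of Corollary \ref{non-mixing}; and your claim that every small-mesh chain must end at one of the three distinguished points is not load-bearing (your upper bound comes from the correlation, not from classifying chains), so it can be dropped, with distinctness of the four constructed orders checked, as in the paper, on the pairs $\bigl((0,1),(0,-1)\bigr)$ and $\bigl((\tfrac{2}{7\pi},-1),(\tfrac{2}{3\pi},-1)\bigr)$.
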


\begin{proof}
    First, we prove that there are at most four distinct ultrafilter orders on $S_1$.

From Theorem \ref{arc}, we know that since the space $\{(x, \sin(\frac{1}{x})): x \in (0,\frac{2}{3\pi}]\}$
is homeomorphic to $(0,\frac{2}{3\pi}]$,
then there are exactly two distinct ultrafilter orders on it. We also know that on the interval $\{0\} \times [-1,1]$ there are exactly two distinct ultrafilter orders. Since the space $S_1$ has only two arc components and on each of them there are exactly two different ultrafilter orders, then on the continuum $S_1$ there are at most four ultrafilter orders (this follows from the fact that any order on $S_1$, when restricted to any arc component, must be an order on that component).

Now we prove that we can find at least four distinct ultrafilter orders on $S_1$. There exist sequences of chains $$\D = \{D_n\}_{n\in \N},~ \Dp=\{D'_n\}_{n\in \N},~ \E=\{E_n\}_{n\in \N}, ~\Ep = \{E'_n\}_{n\in \N},$$ satisfying:
\begin{enumerate}
\item 
for each $n\in\N$, the chains $D_n \in \D , D'_n \in \Dp, E_n \in \E, E'_n \in \Ep$ cover $S_1$,
\item $mesh(D_n) \xrightarrow[]{n \to \infty } 0$, $mesh(D'_n) \xrightarrow[]{n \to \infty } 0$, $mesh(E_n) \xrightarrow[]{n \to \infty } 0$, $mesh(E'_n) \xrightarrow[]{n \to \infty } 0$,
\item for each $n \in \N$, the point $(\frac{2}{3\pi},-1)$ belongs to the first link of the chain $D_n$ and to the first link of the chain $D'_n$,
\item for each $n \in \N$, the point $(0,1)$ belongs to the $m$-th link $d_{m,n}$ of the chain $D_n=\{d_{i,n}\}_{i=1}^{k_n}$, where $$m=\min\{i \leq k_n: d_{i,n}\cap (\{0\} \times [-1,1]) \neq \varnothing\},$$
\item for each $n \in \N$, the point $(0,-1)$ belongs to the $m'$-th link $d'_{m',n}$ of the chain $D'_n=\{d'_{i,n}\}_{i=1}^{k'_n}$, where $$m'=\min\{i \leq k'_n: d'_{i,n}\cap (\{0\} \times [-1,1]) \neq \varnothing\},$$
\item for each $n \in \N$, the chain $E_n \in \E$ is the reversely enumerated chain $D_n$ 
$$\text{ (i.e., } E_n = \{e_{i,n}\}_{i=1}^{k_n} \text{ and for } i \in\{1,...,k_n\}~ e_{i,n} = d_{k_n-i+1, n} \text{)},$$
\item for each $n \in \N$ the chain $E'_n \in \E'$ is the reversely enumerated chain $D'_n$ 
$$\text{ (i.e., } E'_n = \{e'_{i,n}\}_{i=1}^{k'_n} \text{ and for } i \in\{1,...,k'_n\}~ e'_{i,n} = d'_{k'_n-i+1, n} \text{)}.$$
\end{enumerate}

\begin{figure}[]
\centering \includegraphics[width=0.6\textwidth]{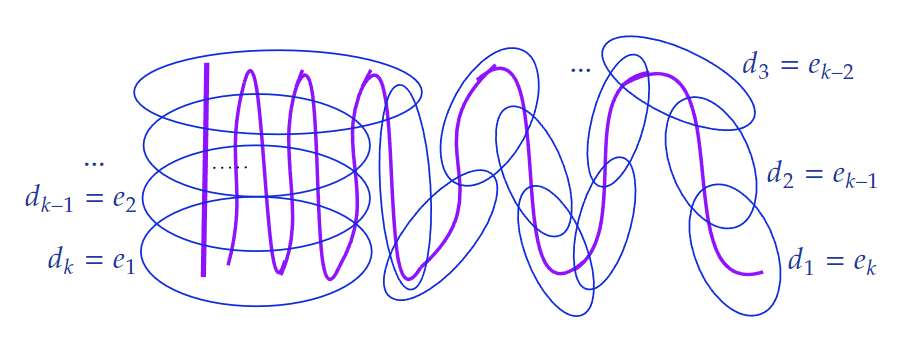}
\caption{Example of chains $D = \{d_i\}_{i=1}^{k} \in \D$ and $E = \{e_i\}_{i=1}^{k}\in \E$.}
\end{figure}

\begin{figure}[]
\centering \includegraphics[width=0.6\textwidth]{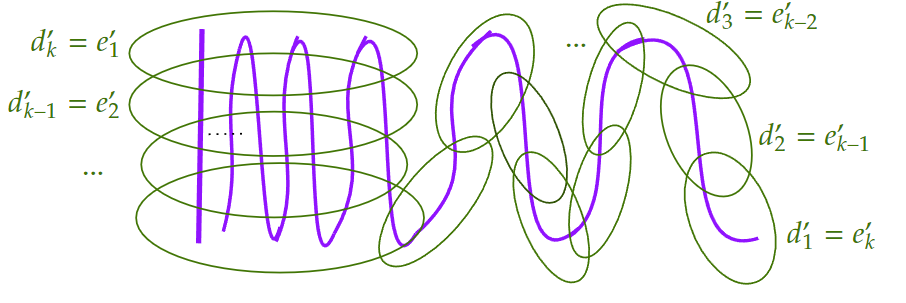}
\caption{Example of chains $D' = \{d'_i\}_{i=1}^{k} \in \D'$ and $E' = \{e'_i\}_{i=1}^{k}\in \E'$.}
\end{figure}

Let us now fix an arbitrary non-principal ultrafilter $\U$ to $\N$. Note that:
$$(0,1)<_{\U}^{\D} (0,-1),~ \Big(\frac{2}{7 \pi},-1\Big)>_{\U}^{\D} \Big(\frac{2}{3 \pi},-1\Big), ~ (0,1)>_{\U}^{\D'} (0,-1),~ \Big(\frac{2}{7 \pi},-1\Big)>_{\U}^{\D'} \Big(\frac{2}{3\pi},-1\Big), $$
$$(0,1)>_{\U}^{\E} (0,-1),~ \Big(\frac{2}{7\pi},-1\Big)<_{\U}^{\E} \Big(\frac{2}{3 \pi},-1\Big), ~ (0,1)<_{\U}^{\E'} (0,-1),~ \Big(\frac{2}{7 \pi},-1\Big)<_{\U}^{\E'} \Big(\frac{2}{3 \pi},-1\Big). $$

This means that for any non-principal ultrafilter $\U$, the orders $\leq_{\U}^{\D}$, $\leq_{\U}^{\D'}$, $\leq_{\U}^{\E}$, $\leq_{\U}^{\Ep}$ are four pairwise distinct ultrafilter orders on $S_1$.
\end{proof}

\begin{ex}
Let $S_2$ be the modified Warsaw sine curve, i.e., the chainable continuum defined as follows: $$S_2 = \overline{X} \cup (\{-1\} \times [-1,1]) \cup ([-1,0] \times \{-1\}),$$ where $$X = \{(x, \sin(\frac{1}{x})): x \in (0,\frac{2}{3\pi}]\}
.$$

 \centerline {\includegraphics[width=8cm, ]{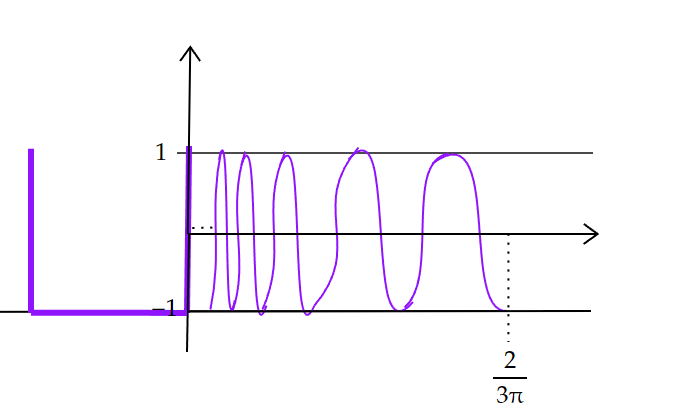} }
\end{ex}

 \begin{thm}
     There exist exactly two distinct ultrafilter orders on $S_2$.
 \end{thm}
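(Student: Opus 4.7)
The plan is to match an upper bound and a lower bound, both equal to $2$.

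For the lower bound, I would exhibit two sequences of chains $\D = \{D_n\}_{n\in\N}$ and $\Dp = \{D'_n\}_{n\in\N}$, with $D'_n$ obtained from $D_n$ by reversing the numbering of its links. Each $D_n$ is to be a chain covering $S_2$ with small mesh, arranged so that its first link contains $(-1,1)$ and its last link contains $(\frac{2}{3\pi},-1)$: concretely, the indexing runs down $\{-1\}\times[-1,1]$, then right along $[-1,0]\times\{-1\}$, then through a ``thick'' block of links simultaneously covering the limit bar $\{0\}\times[-1,1]$ and the sine-curve oscillations at small $x$, and finally along the sine curve out to $(\frac{2}{3\pi},-1)$. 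For any non-principal ultrafilter $\U$, the resulting orders $\leq_{\U}^{\D}$ and $\leq_{\U}^{\Dp}$ are opposites of each other, and in particular distinct.

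For the upper bound, the central claim is: there exists $\varepsilon_0 > 0$ such that for any chain $E = \{e_1, \ldots, e_k\}$ covering $S_2$ with $mesh(E) < \varepsilon_0$, the link containing $p_1 = (-1,1)$ is $e_1$ or $e_k$, and likewise the link containing $p_2 = (\frac{2}{3\pi},-1)$ is $e_1$ or $e_k$. To prove this I would note that for small $\varepsilon_0$ the neighborhood $B(p_j, \varepsilon_0) \cap S_2$ is homeomorphic to a half-open arc for $j = 1, 2$ -- for $p_1$ because everything in $S_2$ other than the top part of $\{-1\}\times[-1,1]$ is at distance at least $1$ from $(-1,1)$, and for $p_2$ because near $(\frac{2}{3\pi},-1)$ the space $S_2$ coincides with a short piece of the sine curve. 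If the link $e_i$ containing $p_j$ had $1 < i < k$, then for sufficiently small mesh the three consecutive links $e_{i-1}, e_i, e_{i+1}$ would all lie inside such a half-arc neighborhood, and a short combinatorial argument shows that two disjoint open subsets of a half-open arc, both intersecting a common open set containing the endpoint, must themselves intersect -- contradicting the non-adjacency of $e_{i-1}$ and $e_{i+1}$.

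With the claim in hand, for any sequence of chains $\D$ defining an ultrafilter order on $S_2$, the set $A = \{n : p_1 \in d_{1,n}\}$ satisfies $A \in \U$ or $\N \setminus A \in \U$. In the first case, for $\U$-many $n$ the point $p_1$ is the $\leq_{D_n}$-minimum and $p_2$ the $\leq_{D_n}$-maximum of $S_2$, so $p_1$ is the $\leq_{\U}^{\D}$-minimum and $p_2$ the $\leq_{\U}^{\D}$-maximum of $S_2$. Corollary \ref{non-mixing} forces the two arc components $T_1, T_2$ of $S_2$ to be intervals in the order, and Theorem \ref{arc} forces the order restricted to each to coincide with one of the two natural orientations; knowing which of $p_1, p_2$ is the global minimum and which is the maximum then determines both restrictions uniquely, and hence the entire order on $S_2$. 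The second case ($\N \setminus A \in \U$) is symmetric and yields the opposite order.

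The main obstacle is ensuring that the ``chain endpoint'' claim isolates exactly the two points $p_1$ and $p_2$ and no others -- in particular, that $(0,1)$ and $(0,-1)$ do \emph{not} serve as chain endpoints in $S_2$, even though they do in $S_1$. This distinction is the crux of why $S_2$ admits only two ultrafilter orders while $S_1$ admits four: in $S_2$ the limit bar is embedded inside the longer arc $T_2$, so any chain link near $(0, \pm 1)$ is forced to continue along $T_2$ rather than terminate. The half-arc argument above captures this automatically, since only $p_1$ and $p_2$ have genuine half-arc neighborhoods -- the sine-curve oscillations accumulating on $(0, \pm 1)$ destroy any such local picture there.
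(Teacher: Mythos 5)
Your lower bound is fine and coincides with the paper's (reverse the numbering of one sequence of chains; the two resulting orders are mutually inverse, hence distinct). The upper bound, however, rests on a false claim. In the definition of a chain used in this paper (following Bing), links are merely open sets: they are \emph{not} required to be connected. Consequently a chain of arbitrarily small mesh can ``fold back'' at an endpoint, and your central claim fails. Concretely, on $[0,1]$ take
$$e_1=(0.04,\,0.06),\quad e_2=[0,\,0.05)\cup(0.055,\,0.08),\quad e_3=(0.07,\,0.09),\quad e_4=(0.085,\,0.12),\ \dots$$
with $e_5,e_6,\dots$ marching off to $1$. This is a chain covering $[0,1]$ of mesh less than $0.1$ in which the endpoint $0$ lies in $e_2$, not in an end link; moreover, writing $\leq_E$ for the induced preorder, we have $0.05<_E 0<_E 1$, so $0$ is neither the $\leq_E$-minimum nor the $\leq_E$-maximum. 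Nesting such folds places the endpoint in the $i$-th link for any prescribed $i$, scaling makes the mesh smaller than any $\varepsilon_0$, and transplanting the construction into the arc $\{-1\}\times(1-\delta,1]$ does the same for $p_1=(-1,1)$ inside $S_2$. The same example refutes the combinatorial statement you invoke: $(0.04,0.06)$ and $(0.07,0.09)$ are disjoint open subsets of a half-open arc, both meeting the open set $[0,0.05)\cup(0.055,0.08)$ that contains the endpoint. The flaw is that the ``common open set'' may be disconnected, so the two neighbouring links can meet it in pieces far from the endpoint; nothing about containing the endpoint gives leverage. Hence no $\varepsilon_0$ as in your claim exists, and the upper-bound argument collapses at this step.

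The intermediate statement you were aiming at (that $p_1$ and $p_2$ are the two extrema of $\leq_{\U}^{\D}$) is in fact true, but it cannot be obtained chain by chain; it has to be derived at the level of the ultrafilter order itself, which is what the paper does. By Theorem \ref{arc} the restriction of $\leq_{\U}^{\D}$ to each of the two arc components $T_1,T_2$ is one of its two natural orientations, and by Corollary \ref{non-mixing} the components are order-convex; this yields at most four orders, determined by the pair of orientations. The two ``bad'' pairs are then excluded by Lemma \ref{arc_help} applied to a subcontinuum meeting \emph{both} components, namely $\overline{X}$ (the limit bar together with the sine curve) or a smaller copy of the Warsaw sine curve inside it. For instance, if $(-1,-1)<_{\U}^{\D}(-1,1)$ and $(\tfrac{2}{7\pi},-1)<_{\U}^{\D}(\tfrac{2}{3\pi},-1)$ and $T_1\leq_{\U}^{\D}T_2$, then $(-1,1)\notin\overline{X}$ lies $\leq_{\U}^{\D}$-between $(0,1)$ and $(\tfrac{2}{3\pi},-1)$, both in $\overline{X}$, contradicting Lemma \ref{arc_help}; if instead $T_2\leq_{\U}^{\D}T_1$, take a point of the sine curve very close to the limit bar together with $(0,1)$ inside a small Warsaw-sine subcontinuum, with $(\tfrac{2}{3\pi},-1)$ as the outside point lying between them. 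Your final step (recovering the whole order once one knows which of $p_1,p_2$ is the minimum) is correct, but it must be fed by this betweenness argument rather than by any control over where individual chains place their end links.
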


 \begin{proof}
     Similarly as in the previous proof, we note that $S_2$ consists of two arc components, and on each of them there are two distinct ultrafilter orders. This means that on the space $S_2$ there are at most four distinct ultrafilter orders. Therefore, for any sequence of chains $\D$ covering $S_2$, such that $mesh(D_n) \xrightarrow[]{n \to \infty } 0$ and for any non-principal ultrafilter $\U$, the ultrafilter order $\leq_{\U}^{\D}$ on $S_2$ must satisfy one of the following conditions:
\begin{enumerate}
\item $(-1,-1)<_{\U}^{\D} (-1,1),~ \big(\frac{2}{7 \pi},-1\big)>_{\U}^{\D} \big(\frac{2}{3 \pi},-1\big)$,
\item $(-1,-1)<_{\U}^{\D} (-1,1),~ \big(\frac{2}{7 \pi},-1\big)<_{\U}^{\D} \big(\frac{2}{3 \pi},-1\big)$, 
\item $(-1,-1)>_{\U}^{\D} (-1,1),~ \big(\frac{2}{7 \pi},-1\big)>_{\U}^{\D} \big(\frac{2}{3 \pi},-1\big)$, 
\item $(-1,-1)>_{\U}^{\D} (-1,1),~ \big(\frac{2}{7 \pi},-1\big)<_{\U}^{\D} \big(\frac{2}{3 \pi},-1\big)$. 

\end{enumerate}
Note that each of the conditions 1.--~4. uniquely determines an ultrafilter order. Since we can reverse enumeration of our chains, we know that on $S_2$ there are at least two distinct ultrafilter orders. Using Theorem \ref{arc_help} one can easily prove that conditions $2.$ and $3.$ cannot hold, which proves the theorem.

 \end{proof}

 \begin{ex}\label{forest_of_sines}
     Consider the chainable continuum $S_3$, which is defined as follows:
$$S_3 = \bigcup_{n\in\N} I_n \cup \bigcup_{n\in\N } A_n \cup \{(0,0)\},$$
where:
\begin{itemize}
\item \normalsize{$I_n $ is a closed interval of the form: $\{\frac{1}{n}\} \times [0, \frac{1}{n}]$,}
\item \normalsize{$A_n = \{\big|x\cdot \sin\big(\frac{1}{(x-\frac{1}{n+1})(\frac{1}{n}-x)}\big)\big| : x\in (\frac{1}{n+1}, \frac{1}{n}) \}.$} 
\item $a_n=\big(\frac{1}{n},0\big) $, $b_n=\big(\frac{1}{n}, \frac{1}{n}\big)$
\end{itemize}
 \centerline {\includegraphics[width=6cm, ]{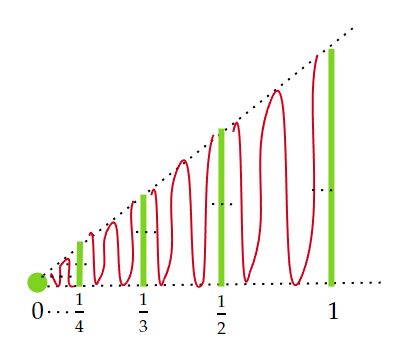} }

 \end{ex}
 

Let $D = \{d_{1},...,d_{k}\}$ be a chain covering the continuum $S_3$ and let $n \in \N$. We say that $D$ \textbf{preserves the orientation of $I_n$} if $a_n \in d_{i}$ and
$b_n \in d_{j} $ for some $i<j\leq k$. Otherwise, we say that $D$ \textbf{reverses the orientation of $I_n$}.


 \begin{thm}\label{thm_S_3}
     There exist exactly $\mathfrak{c}$ distinct ultrafilter orders on $S_3$.
 \end{thm}

 \begin{proof}
     First, we prove that there are at most $\mathfrak{c}$ distinct ultrafilter orders on $S_3$.
The space $S_3$ consists of countably many arc components. Each of them is homeomorphic to $[0,1], (0,1)$ or is a single-point space. This means that on each of the arc components of $S_3$, there are at most two different ultrafilter orders. Therefore, assuming that the arc components of $S_3$ are arranged in a certain way with respect to each other in the ultrafilter order, there are at most $2^{\aleph_0} = \mathfrak{c}$ ultrafilter orders on $S_3$. Since the set of arc components of $S_3$ is countable, we know that there are at most $\mathfrak{c}$ possible ways to arrange these components with respect to each other in an ultrafilter order. Therefore, there are  at most $\mathfrak{c} \cdot \mathfrak{c} = \mathfrak{c} $ possible ultrafilter orders on $S_3$.

Now we prove that there are at least $\mathfrak{c}$ ultrafilter orders on $S_3$. 
First, we claim that for every $s=(s_1,\dots, s_n)\in \{0,1\}^n$, there exists a chain $D_s$  of mesh less than $1/n$ that preserves the orientation of $I_i$ iff $s_i=0$, for every $i\leq n$.
Figures \ref{chain_1}, \ref{chain_2}, and \ref{chain_3} present the choice of $D_s$ for 
$s=0, 01, 011$. We leave the precise definition of $D_s$ to the reader.


\begin{figure}[!]
\centering \includegraphics[width=1\textwidth]{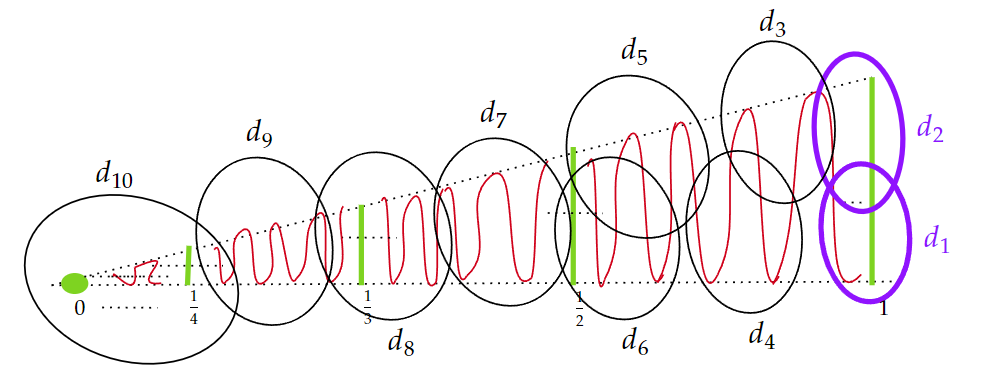}
\caption{Chain $D_0$ is preserving the orientation of $I_1$.}\label{chain_1}
\end{figure}

\begin{figure}[!]
\centering \includegraphics[width=1\textwidth]{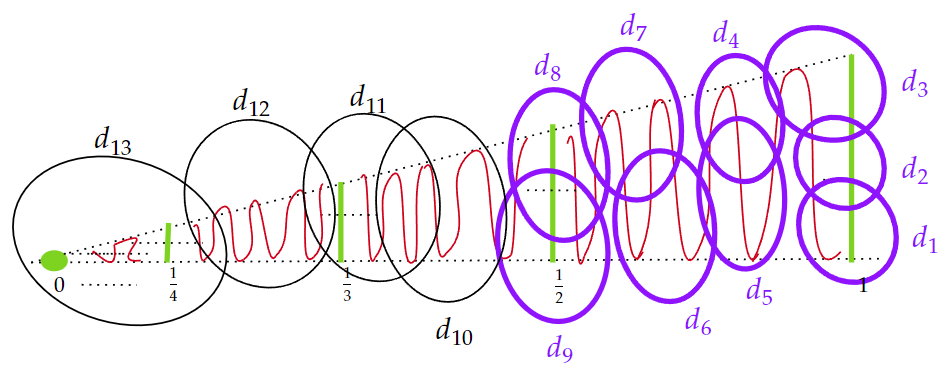}
\caption{Chain $D_{01}$ is preserving the orientation of $I_1$ and reversing the orientation of $I_2$.}\label{chain_2}
\end{figure}

\begin{figure}[!]
\centering \includegraphics[width=1\textwidth]{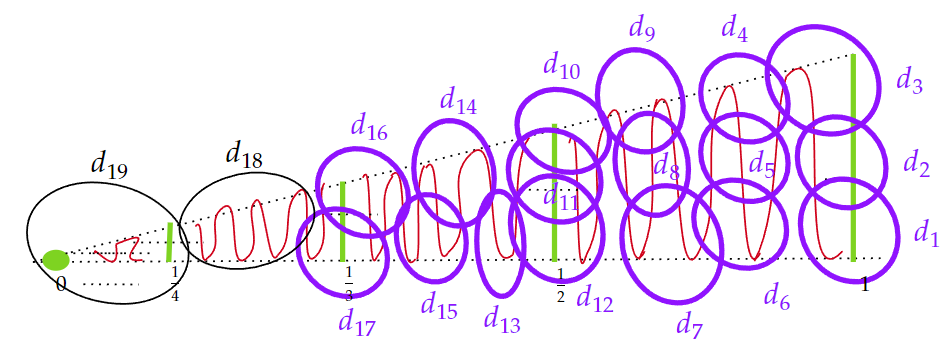}
\caption{Chain $D_{011}$ is preserving the orientation of $I_1$ and reversing the orientation of $I_2, I_3$.}\label{chain_3}
\end{figure}


Let $\U$ be any non-principal ultrafilter on $\N$.
For any $s = (s_n)_{n=1}^{\infty} \in \{0,1\}^{\N}$ we let $\D_s=\{D_{s|n}:n\in\N\}$.
Let $s,t \in \{0,1\}^{\N}$ be a distinct. Then there exists $n\in\N$ such that $s_n\neq t_n$.
Without loss of generality, assume that $s_n=0$ and $t_n=1$.
Then $D_{s|m}$ preserves the orientation of $I_n$ whereas $D_{t|m}$ reverses the orientation of $I_n$, for every $m\geq n$.

Then, for $m\geq n$, we have $a_n \leq_{D_{s|m}} b_n$.
Moreover the set $\{m\in \N: m\geq n\}$ is cofinite, and so it belongs to $\U$.
Thus $a_n \leq_{\U}^{\D_s} b_n$.
Similarly we get that $a_n \geq_{\U}^{\D_t} b_n$.
Thus the ultrafilter orders $\leq_{\U}^{\D_s}$ and $\leq_{\U}^{\D_t}$ are different, since they order differently the pair $(a_n, b_n)$.
Thus, we have proved that there exists $\mathfrak{c}$ pairwise distinct ultrafilter orders on $S_3$.
 \end{proof}

Now we will describe an example which illustrates that the arc components of a given chainable continuum $X$ might be ordered in  many different ways when we consider distinct ultrafilter orders on $X$.

  \begin{ex}\label{t}
There is a chainable continuum $T$ with exactly three arc components: $T_1, T_2, T_3$, described as follows:
 
Recall that $S_1$ is a Warsaw sine curve (described in Example \ref{sinus}), let $T_1$ be the compact arc component in $S_1$, and $T_2 = S_1 \setminus T_1$. 
Let $T_3$ be a ray $(0,1]$ and let $T$ be any metrizable compactification of $T_3$ with remainder $S_1$.
The space $T$ was also described in \cite[Chapter 2, p.34]{thomas}.



Notice that by Corollary \ref{non-mixing} we know that for all $i\neq j$ we have (see Definition \ref{arc_comp_order})
$$T_i \leq_{\U}^{\D} T_j \text{ or } T_j \leq_{\U}^{\D} T_i. $$

One can easily check that there exist sequences of chains $\D = \{D_n\}_{n\in \N} $ and $ \E=\{E_n\}_{n\in \N}$, covering $T$, such that for any non-principal ultrafilter $\U$ on $\N$:
$$ T_3 \leq_{\U}^{\D} T_1 \leq_{\U}^{\D} T_2 \text{ and } T_1 \leq_{\U}^{\E} T_2 \leq_{\U}^{\E} T_3.  $$
Note that the arc component $T_1$ is in the middle in the first ordering, and it is not in the middle in the second ordering of arc components of $T$.
\end{ex}

\section{Order type of ultrafilter orders on Suslinean chainable continua}\label{Suslinean}

\begin{defi}
A continuum is \textbf{Suslinean}
if any collection of its pairwise disjoint nondegenerate subcontinua is countable. 
\end{defi}

Recall that a Borel isomorphism $h$ between metrizable spaces is called a \textbf{Borel isomorphism of the class (1,1)} if both $h$ and $h^{-1}$ are of the first Baire class, i.e., inverse images of open sets under $h$ and $h^{-1}$ are $F_\sigma$-sets.\medskip

In this section we will prove the following result.

\begin{thm}\label{suslin}
    Let $(X,\tau)$ be a non-degenerate Suslinean chainable continuum. Then, for any ultrafilter order $\leq_{\U}^{\D}$ on $X$, the space $X$ with this order  has the order type of an interval, i.e. $$(X, \leq_{\U}^{\D}) \stackrel{\text{izo}}{\simeq} ([0,1],\leq).$$
Moreover, there exists an order isomorphism $h: (X, \leq_{\U}^{\D}) \to ([0,1],\leq)$ which is a Borel isomorphism of the class (1,1) between  $(X,\tau)$ and  $([0,1],\tau_e)$.
\end{thm}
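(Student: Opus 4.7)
My plan is to construct an explicit order isomorphism $h\colon (X,\leq_\U^\D) \to ([0,1],\leq)$ via an ultrafilter-limit construction on the chains, and to verify Baire class $1$ for both $h$ and $h^{-1}$. For each $n$ I would fix a continuous partition of unity $\{\eta_{i,n}\}_{i=1}^{k_n}$ subordinate to $D_n$, set $\psi_n(x) = k_n^{-1}\sum_{i=1}^{k_n} i\,\eta_{i,n}(x)$, and define $h(x) = \lim_{\U}\psi_n(x)$. A direct check gives $\psi_n(x)\in[\alpha_n(x)/k_n,\,\beta_n(x)/k_n]$, where $\alpha_n(x),\beta_n(x)$ are the minimal and maximal indices of links of $D_n$ containing $x$. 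Since $\mathrm{mesh}(D_n)\to 0$ forces $k_n\to\infty$ on the non-degenerate $X$, and $x\leq_\U^\D y$ means $\{n:\alpha_n(x)\leq\beta_n(y)\}\in\U$, one obtains $\psi_n(x)\leq\psi_n(y)+2/k_n$ on this $\U$-large set, so $h$ is monotone. Surjectivity follows from compactness of $X$ and the $\varepsilon_n$-map property of $\psi_n$ (Lemma \ref{macias}): for each $t\in[0,1]$, a $\U$-cluster point $x$ of preimages $x_n\in\psi_n^{-1}(t)$ satisfies $h(x)=t$ because the small-fiber property of $\psi_n$ controls $\psi_n(x)$ near $\psi_n(x_n)=t$.

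The hardest step is injectivity via the Suslinean hypothesis. Suppose $h(x_0)=h(y_0)=t$ for distinct $x_0<_\U^\D y_0$. Then $(\beta_n(y_0)-\alpha_n(x_0))/k_n\to 0$ along $\U$, yet the subcontinua $L_n:=\overline{\bigcup_{i=\alpha_n(x_0)}^{\beta_n(y_0)} d_{i,n}}$ each contain both $x_0$ and $y_0$; their Hausdorff $\U$-limit is a nondegenerate subcontinuum $K_t\subseteq h^{-1}(t)$ (the inclusion uses the $\varepsilon_n$-map property to control the values of $\psi_n$ on points close to the $L_n$). For distinct $t\neq t'$ the continua $K_t$ and $K_{t'}$ are disjoint because they lie in disjoint fibers of $h$, so by Suslinean the set $T=\{t:|h^{-1}(t)|\geq 2\}$ is at most countable. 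To promote this to $T=\emptyset$, I would iterate inside each nondegenerate fiber (which is itself a chainable Suslinean continuum carrying a restricted ultrafilter order coming from refined chains), using a recursion or quotient argument to collapse fibers to singletons. Making this iteration rigorous is the principal technical difficulty, since one must match the ultrafilter-combinatorial data $h(x_0)=h(y_0)$ with a continuum-theoretic obstruction.

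Once $h$ is an order isomorphism $(X,\leq_\U^\D)\to([0,1],\leq)$, the Borel class $(1,1)$ property follows by describing preimages of order intervals. The set $h^{-1}([0,t])=\{x:h(x)\leq t\}$ can be written as $\bigcap_k\{x:\{n:\psi_n(x)\leq t+1/k\}\in\U\}$, and using monotonicity together with injectivity of $h$ to collapse the ultrafilter quantifier, one shows it is $F_\sigma$ in the original topology of $X$; hence $h$ is Baire class $1$. Dually, monotonicity of $h^{-1}$ and second-countability of $[0,1]$ reduce the preimage of a basic open subset of $X$ under $h^{-1}$ to a union of order intervals of $[0,1]$ whose endpoints admit $F_\sigma$ descriptions, so $h^{-1}$ is Baire class $1$ as well.
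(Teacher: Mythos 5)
There is a genuine gap, and it sits exactly where your proposal defers the work. Your injectivity step is where the theorem actually lives: after one round of refinement, the maps you build inside the countably many nondegenerate fibers will in general again have nondegenerate fibers, so "iterating inside each fiber" is not a single recursion but a transfinite process, and there is no a priori reason it ever terminates. You acknowledge this ("making this iteration rigorous is the principal technical difficulty") but do not supply the missing ingredient. The paper's proof is organized precisely around this point: it constructs a transfinite family of ordered sets $(I_\alpha,\leq_\alpha)$, $\alpha<\omega_1$, together with non-decreasing surjections $f_\alpha\colon (X,\leq_{\U}^{\D})\to I_\alpha$ whose fibers are subcontinua; at successor steps it inserts a copy of $[0,1]$ lexicographically over each of the countably many bad fibers and refines using the finest monotone map of Lemma \ref{NG}, and at limit steps it takes inverse limits, verifying that the limit order is still isomorphic to $([0,1],\leq)$ and that fibers remain continua. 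Termination is then \emph{not} automatic: it follows from Mohler's theorem (Theorem \ref{Mohler_thm}), which gives a countable ordinal bound on the length of decreasing sequences of tranches in a hereditarily decomposable chainable continuum, so at some $\alpha_0<\omega_1$ all fibers of $f_{\alpha_0}$ are singletons. Without this (or some equivalent continuum-theoretic bound), your scheme does not yield an order isomorphism, so the proof is incomplete at its core.

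Two secondary problems. First, your surjectivity argument for $h=\lim_{\U}\psi_n$ is flawed: from $x=\lim_{\U}x_n$ you cannot conclude that $\psi_n(x)$ is close to $\psi_n(x_n)$ for $\U$-many $n$, because $\psi_n$ oscillates at the scale of $mesh(D_n)$ and spatially close points may lie in links with wildly different indices (this is exactly why $h$ fails to be continuous on, say, indecomposable continua); the paper sidesteps this entirely by starting from the continuous monotone surjection of Lemma \ref{NG} rather than an ultrafilter limit of partition-of-unity maps. Second, your argument that $h^{-1}$ is of Baire class $1$ is not an argument yet: to show that $h(U)$ is $F_\sigma$ for $U$ open in $(X,\tau)$ you need that $U$ is a \emph{countable} union of subcontinua — this uses the Suslinean hypothesis again, via the decomposition of $U$ into constituants (Kuratowski, \S 47) — after which each subcontinuum maps onto an order interval by Lemma \ref{arc_help}. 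Your sketch supplies neither the countability nor the reason the pieces are $F_\sigma$.
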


    Let $X$ be a chainable Suslinean continuum. Then $X$ is hereditarily decomposable \cite{minc}, so $X$  satisfies assumptions of the following lemma (which may be found e.g. in \cite[Theorem 1.1]{mohler}).

\begin{lem}\label{NG}
    Let $X$ be a hereditarily decomposable chainable continuum. Then there exists a continuous and monotone surjection $f:X\to [0,1]$ such that if $g:X\to [0,1]$ is any other monotone and continuous surjection, then there is a monotone continuous surjection $m:[0,1] \to [0,1]$ such that $g=m\circ f$. Moreover, for every $t\in[0,1]$ we have $int (f^{-1}(t))=\emptyset$. 
\end{lem}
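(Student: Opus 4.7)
The plan is to use Lemma \ref{NG} as the backbone: it produces a monotone continuous surjection $f:X\to[0,1]$ with empty-interior fibres $K_t:=f^{-1}(t)$. I will show that $(X,\leq_{\U}^{\D})$ is obtained from $[0,1]$ by inserting at most countably many closed-interval-shaped blocks, one per non-degenerate fibre, and then bundle this into a single order isomorphism $h$. As a first step, I would prove that each fibre $K_t$ is \emph{order-convex} with respect to $\leq_{\U}^{\D}$: $K_t$ is a subcontinuum (by monotonicity of $f$), so if $x,y\in K_t$ and $z\notin K_t$, then Lemma \ref{arc_help} with $M=K_t$ forces $z$ not to lie between $x$ and $y$ in $D_n$ for all large $n$, hence not between them in $\leq_{\U}^{\D}$. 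A three-point argument applied to the subcontinua $f^{-1}([t_1,t_2])$ and $f^{-1}([t_2,t_3])$, combined with Lemma \ref{order_3}, then shows that the order induced on the quotient $[0,1]\simeq X/\!\!\sim_f$ is either standard or reverse; replacing $f$ by $1-f$ if needed, I assume it is standard.

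The main difficulty is understanding the restriction of $\leq_{\U}^{\D}$ to the non-degenerate fibres. The Suslinean hypothesis gives that $N:=\{t\in[0,1]:K_t\text{ is non-degenerate}\}$ is countable (the fibres being pairwise disjoint subcontinua). For each $t\in N$, connectedness of $K_t$ implies that $\{i:d_{i,n}\cap K_t\neq\varnothing\}$ is a consecutive block $[a_n,b_n]$, so $\widetilde D_n:=\{d_{i,n}\cap K_t:a_n\le i\le b_n\}$ is a chain of open subsets of $K_t$ of mesh tending to $0$, and the ultrafilter order it generates on $K_t$ is exactly $\leq_{\U}^{\D}$ restricted to $K_t$. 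Thus the restricted order is itself an ultrafilter order (in the sense of Definition \ref{seq_of_chains}) on a chainable Suslinean continuum. The plan is to apply the theorem inductively along a well-founded Cantor--Bendixson-style rank on Suslinean chainable continua; the base cases (arcs and singletons) are handled by Theorem \ref{arc}. Verifying that this rank is well-founded, i.e.\ that iteratively collapsing non-degenerate Nall--Gordh fibres terminates within the class of Suslinean chainable continua, is the technical heart of the argument and is where their structure theory enters.

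With the fibres under control, I construct $h$ by a classical insertion procedure. Enumerate $N=\{t_n:n\in\N\}$, pick weights $\varepsilon_n>0$ with $\sigma:=\sum_n\varepsilon_n<\infty$, and define $H:[0,1]\to[0,1+\sigma]$ by $H(t)=t+\sum_{t_n<t}\varepsilon_n$, a monotone function with a jump of size $\varepsilon_n$ at each $t_n$ and continuous elsewhere. For each $n$ pick a continuous order isomorphism $\phi_n:(K_{t_n},\leq_{\U}^{\D}|_{K_{t_n}})\to[H(t_n),H(t_n)+\varepsilon_n]$ (continuity coming for free, as a monotone bijection between a compact connected ordered space and a closed interval is continuous in the order topology). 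Then set $h(x)=(1+\sigma)^{-1}H(f(x))$ for $x\notin\bigcup_nK_{t_n}$ and $h(x)=(1+\sigma)^{-1}\phi_n(x)$ for $x\in K_{t_n}$. Order-convexity of the fibres and weak monotonicity of $f$ with respect to $\leq_{\U}^{\D}$ guarantee that $h$ is a well-defined order isomorphism onto $[0,1]$.

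Finally, for the Baire class $(1,1)$ conclusion: $h$ agrees with the continuous function $(1+\sigma)^{-1}H\circ f$ on the $G_{\delta}$-set $X\setminus\bigcup_nK_{t_n}$ (noting that $H$ is continuous off the countable set $N$), and on each of the countably many closed fibres $K_{t_n}$ it equals the continuous map $(1+\sigma)^{-1}\phi_n$. Since $X$ is partitioned into one $G_{\delta}$-set and countably many closed sets on each of which $h$ is continuous, preimages of open sets under $h$ are $F_{\sigma}$, so $h$ is of Baire class $1$. The symmetric analysis on $[0,1]$, decomposed into the $G_{\delta}$-set $[0,1]\setminus\bigcup_n[H(t_n),H(t_n)+\varepsilon_n]$ and the countably many closed reserved intervals, shows that $h^{-1}$ is also of Baire class $1$, completing the class $(1,1)$ claim.
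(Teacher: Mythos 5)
You have not proved the statement at issue. The statement to be proved is Lemma \ref{NG}: for a hereditarily decomposable chainable continuum $X$ there exists a \emph{finest} monotone continuous surjection $f:X\to[0,1]$ (one through which every other monotone continuous surjection onto $[0,1]$ factors via a monotone map), with all fibres having empty interior. Your proposal instead takes Lemma \ref{NG} as given (``the plan is to use Lemma \ref{NG} as the backbone: it produces a monotone continuous surjection $f$\,'') and then builds an order isomorphism $h:(X,\leq_{\U}^{\D})\to([0,1],\leq)$ of Baire class $(1,1)$ under a Suslinean hypothesis. That is a proof sketch for Theorem \ref{suslin} of the paper, not for Lemma \ref{NG}; as an argument for Lemma \ref{NG} it is circular, since its very first step invokes the lemma it is supposed to establish. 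Note also that Lemma \ref{NG} makes no mention of ultrafilters, chains $\D$, Suslinean continua, or Baire classes --- every ingredient your proposal is organized around is extraneous to the target statement, while the two things the target statement actually asserts (the universal factorization property of $f$, and $\mathrm{int}\,f^{-1}(t)=\emptyset$) are nowhere addressed except by assumption.

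For the record, the paper does not prove Lemma \ref{NG} either: it cites it as Theorem 1.1 of Mohler's paper on tranches in $\lambda$-dendroids, and this is the classical theory of monotone decompositions of irreducible continua. A genuine blind proof would have to proceed along those lines: a chainable continuum is irreducible between two of its points; for a hereditarily decomposable irreducible continuum one constructs the finest monotone upper semicontinuous decomposition (the decomposition into tranches, as in Thomas's memoir or Kuratowski \S 48), shows the quotient space is an arc, checks that the quotient map is the desired $f$, that any monotone continuous surjection $g$ onto $[0,1]$ induces a monotone map on the quotient (giving $g=m\circ f$), and that hereditary decomposability forces each tranche to have empty interior. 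None of this structure theory appears in your proposal, so there is a genuine and complete gap between what was asked and what was argued. (Separately, even read as an attempt at Theorem \ref{suslin}, your reduction of the fibres $K_t$ to an induction over a ``Cantor--Bendixson-style rank'' is left unverified --- you yourself flag its well-foundedness as the technical heart --- whereas the paper handles this by a transfinite construction of length $\omega_1$ terminated by Mohler's bound on the depth of nested tranches, Theorem \ref{Mohler_thm}; but that issue is moot here, since the statement under review is Lemma \ref{NG}.)
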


Following \cite{mohler}, we say that the continua $f^{-1}(t)$ from the above theorem are called\textbf{ tranches} of the continuum $X$.


Note that, by Lemma \ref{arc_help}, monotone maps from a chainable continuum $X$ to the unit interval are also monotone in the sense of the ultrafilter orders (see also Lemma \ref{order_3}):
\begin{lem}\label{porzadek}
            Function $f$ from Lemma \ref{NG} is also monotone with respect to the ultrafilter order $\leq_{\U}^{\D}$, i.e.
for $x,y,z\in X$, for which we have $$x \leq_{\U}^{\D} y \leq_{\U}^{\D} z$$ we must have $$f(x)\leq f(y)\leq f(z) \text{ or }f(x)\geq f(y)\geq f(z).$$
\end{lem}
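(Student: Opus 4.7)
The plan is to argue by contradiction. Suppose $x\leq_{\U}^{\D}y\leq_{\U}^{\D}z$ but the conclusion fails; a short case analysis shows this is equivalent to $f(y)$ lying strictly outside the closed interval $[\min(f(x),f(z)),\max(f(x),f(z))]\subseteq[0,1]$. Up to swapping the roles of $x$ and $z$, we are in one of two symmetric cases. If $f(y)<\min(f(x),f(z))$, pick any $t$ with $f(y)<t<\min(f(x),f(z))$ and set $M:=f^{-1}([t,1])$; if $f(y)>\max(f(x),f(z))$, pick $t$ with $\max(f(x),f(z))<t<f(y)$ and set $M:=f^{-1}([0,t])$.

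The key point is that $M$ is a subcontinuum of $X$ containing $x,z$ but not $y$. Closedness is immediate from continuity of $f$, and connectedness follows from the standard continuum-theoretic fact that a monotone continuous surjection between continua pulls back subcontinua to subcontinua (because each fiber $f^{-1}(s)$ is connected, any open separation of $M$ would, together with the complementary preimage, separate $X$). This is the only place where the monotonicity of $f$ from Lemma \ref{NG} is genuinely used.

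Having secured $M$, I would apply Lemma \ref{arc_help} with the two ``inside'' points $x,z\in M$ and the ``outside'' point $y\notin M$ (i.e.\ with the middle and end roles of that lemma permuted relative to its literal variables). It yields an integer $k$ such that for every $n>k$ one has $\neg[(x\leq_{D_n}y\leq_{D_n}z)\vee(z\leq_{D_n}y\leq_{D_n}x)]$, so in particular $x\leq_{D_n}y\leq_{D_n}z$ fails for every $n>k$. On the other hand, by definition of $\leq_{\U}^{\D}$ the sets $A:=\{n:x\leq_{D_n}y\}$ and $B:=\{n:y\leq_{D_n}z\}$ both belong to $\U$, and the cofinite set $\{n:n>k\}$ lies in every non-principal ultrafilter; hence $A\cap B\cap\{n:n>k\}\in\U$ is nonempty. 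Any $n$ in this set witnesses $x\leq_{D_n}y\leq_{D_n}z$ with $n>k$, contradicting what Lemma \ref{arc_help} gave us.

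I do not expect any serious obstacle: once one observes that the negation of the desired conclusion forces $f(y)$ strictly outside the interval spanned by $f(x)$ and $f(z)$, the proof reduces to the geometric content of Lemma \ref{arc_help} plus the algebra of ultrafilters. The only step that requires invoking outside machinery is the claim that preimages of subintervals under $f$ are subcontinua, which is a textbook consequence of monotonicity.
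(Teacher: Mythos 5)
Your proof is correct and takes essentially the same route as the paper's: both use monotonicity of $f$ to realize the $f$-preimage of an interval as a subcontinuum containing $x$ and $z$ but not $y$ (the paper takes $J$ with endpoints $f(x),f(z)$ directly, while you take a one-sided interval $[t,1]$ or $[0,t]$ with a buffer $t$ --- a cosmetic difference), and then combine Lemma \ref{arc_help}, with its variable roles permuted, and the ultrafilter algebra to contradict $x\leq_{\U}^{\D} y\leq_{\U}^{\D} z$. The paper compresses the contradiction and the ultrafilter step into a single sentence; your version merely spells them out.
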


\begin{proof}
Consider $f:(X,\leq_{\U}^{\D}) \to ([0,1],\leq)$ as in Lemma \ref{NG}. 
   Fix $x,y,z \in X$ with $x \leq_{\U}^{\D} y \leq_{\U}^{\D} z$. Let $J$ be a closed subinterval (possibly degenerate) of $[0,1]$ with endpoints $f(x), f(z)$. Then $f^{-1}(J)$ is a subcontinuum of $X$ containing $x$ and $z$. By our assumption on ordering of the points $x,y,z$ and Lemma \ref{arc_help} we must have that $y\in f^{-1}(J)$. Hence, $f(y)$ belongs to $J$, so it lies between $f(x)$ and $f(z)$.
\end{proof}

For the proof of Theorem \ref{suslin} we will also need the following result of Mohler \cite[Corollary 2.9]{mohler}.

\begin{thm}[Mohler]\label{Mohler_thm}
If $X$ is a hereditarily decomposable chainable continuum, then there is a countable ordinal upper bound on the length of sequences $\{T_{\alpha}\}$ of
nondegenerate subcontinua of $X$ such that 
\begin{itemize}
    \item $T_0$ is a tranche of $X$,
    \item for each $\alpha = \beta+1,~ T_{\alpha}$ is a tranche of $T_{\beta}$,
    \item for limit ordinals $\alpha,~ T_{\alpha} = \bigcap_{\beta<\alpha} T_{\beta}$.
   
\end{itemize}
\end{thm}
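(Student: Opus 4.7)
The plan is to combine Lemma \ref{NG} with a separability argument and a descriptive-set-theoretic tree argument to obtain first countability of each individual sequence and then a uniform countable bound.

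First I would show that the sequence strictly decreases at every successor step. Both hereditary decomposability and chainability pass to nondegenerate subcontinua, so while $T_\alpha$ is nondegenerate, Lemma \ref{NG} applies to $T_\alpha$ and produces a canonical monotone surjection $f_\alpha\colon T_\alpha\to[0,1]$ whose fibers all have empty interior in $T_\alpha$. Since $T_{\alpha+1}$ is one such fiber, $T_{\alpha+1}\subsetneq T_\alpha$ at every successor step. At a limit ordinal $\alpha$ the intersection $\bigcap_{\beta<\alpha}T_\beta$ of a nested family of continua in a compact metric space is again a continuum, so the sequence stays well-defined throughout.

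Next I would show that any single such sequence has countable length. Fix a countable base $\{U_n\}_{n\in\N}$ for $X$. For each successor step $\alpha\mapsto\alpha+1$, pick $x_\alpha\in T_\alpha\setminus T_{\alpha+1}$ (nonempty by the preceding step) and choose $n(\alpha)\in\N$ with $x_\alpha\in U_{n(\alpha)}$ and $U_{n(\alpha)}\cap T_{\alpha+1}=\emptyset$ (possible since $T_{\alpha+1}$ is closed). If $\alpha<\beta$ satisfied $n(\alpha)=n(\beta)$, then $T_\beta\subseteq T_{\alpha+1}$ would force $U_{n(\alpha)}\cap T_\beta=\emptyset$, contradicting $x_\beta\in U_{n(\beta)}\cap T_\beta$. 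Hence $\alpha\mapsto n(\alpha)$ is injective into $\N$ and each sequence has countable order type.

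The main obstacle is promoting this to a uniform bound depending only on $X$. My plan is to work inside the compact Polish hyperspace $K(X)$ of nonempty compact subsets of $X$ with the Hausdorff metric; the subfamily $\mathcal{C}(X)$ of subcontinua is closed. The uniqueness clause in Lemma \ref{NG} makes the assignment $T\mapsto f_T$ from nondegenerate subcontinua to their canonical monotone maps Borel measurable, so the relation ``$T'$ is a tranche of $T$'' is a Borel subset of $\mathcal{C}(X)^2$. Consider the tree of finite tranche sequences starting at a tranche of $X$; the previous step shows every branch has countable length after closing off at limit ordinals via intersections. A $\Sigma^1_1$-boundedness argument then forces the ordinal rank of this tree to lie below some countable $\gamma_X<\omega_1$, which yields the required uniform bound, because any transfinite tranche sequence of length $\lambda$ embeds into the tree with rank at least $\lambda$, so $\lambda\leq\gamma_X$. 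The delicate points in executing this plan are verifying the Borel dependence of $f_T$ on $T$ and checking that limit-stage intersections interact correctly with the tree rank, both of which rely on the canonicity supplied by Lemma \ref{NG}.
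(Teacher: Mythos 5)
First, note that the paper offers no internal proof of this statement to compare against: it is quoted as Mohler's result \cite[Corollary 2.9]{mohler}, so your attempt stands or falls on its own. Your first two steps are correct, but they are the easy part. Properness of tranches (each tranche is a fiber of a \emph{surjection} onto $[0,1]$, hence a proper subset) gives strict decrease at successor stages, and your base-counting argument correctly shows that every \emph{single} sequence $\{T_\alpha\}$ has countable length; this is the classical fact that a strictly decreasing transfinite sequence of closed subsets of a second countable space must be countable. Neither step touches the actual content of the theorem, which is the \emph{uniform} countable bound over all such sequences.

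The genuine gap is in your last paragraph. The tree of finite tranche sequences is, in general, \emph{ill-founded}: there exist hereditarily decomposable chainable continua containing tranche sequences of length $\geq \omega$ (indeed of any prescribed countable length --- this is exactly why Mohler's theorem has content), and any such sequence yields an infinite branch of your tree. An ill-founded tree has no ordinal rank in the relevant sense, and both $\Sigma^1_1$-boundedness and the Kunen--Martin theorem apply only to well-founded trees or relations, so the quoted ``boundedness argument'' has nothing to act on. Relatedly, your closing claim that ``any transfinite tranche sequence of length $\lambda$ embeds into the tree with rank at least $\lambda$'' cannot be right: a tree of \emph{finite} sequences records only the first $\omega$ stages of a transfinite sequence, and the limit-stage intersections --- which are the entire source of difficulty --- are invisible to it. This is not a repairable technicality within your framework; one needs machinery designed for transfinite derivation processes, for example the boundedness theorem for Borel derivatives on hyperspaces (see \cite[Section 34]{kechris}). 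Its monotonicity hypothesis is in fact available here, since the universal property in Lemma \ref{NG} implies that for $x \in K \subseteq L$ the tranche of $K$ containing $x$ is contained in the tranche of $L$ containing $x$; making that work (or simply citing Mohler, as the paper does) is what a complete proof requires. Your assertion that $T \mapsto f_T$ is Borel is also left unproved, but that is a secondary issue compared with the well-foundedness failure.
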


\begin{proof}[Proof of Theorem \ref{suslin}]\label{konstrukcja}
    
We will inductively define a family of ordered sets $\{(I_{\alpha},\leq_{\alpha}~): \alpha<\omega_1\}$, a family of maps $\{p_\alpha^\beta: I_{\beta} \to I_\alpha: \alpha<\beta < \omega_1\}$, and   a family of functions $\{f_{\alpha}: X\to I_\alpha: \alpha < \omega_1\}$ satisfying, for each $\alpha<\beta < \omega_1$, the following conditions:
\begin{enumerate}[(a)] 
\item $(I_{\alpha},\leq_{\alpha})$ is order isomorphic to the interval $[0,1]$ with the standard order $\le$;
\item $p_\alpha^\beta: (I_{\beta},\leq_{\beta}) \to (I_{\alpha},\leq_{\alpha})$ is a non-decreasing surjection;
\item $f_{\alpha}$ is a non-decreasing surjection from $(X, \leq_{\U}^{\D})$ to $(I_{\alpha},\leq_{\alpha})$;
\item for each $s\in I_{\alpha}$ the set $f_{\alpha}^{-1}(s)$ is a (possibly degenerate) subcontinuum of $X$;
\item  $f_\alpha: (X,\tau) \to (I_\alpha,\tau_\alpha)$ is of the first Baire class, where $\tau_\alpha$ is the order topology generated by $\leq_{\alpha}$;
\item $f_\alpha = p_\alpha^\beta\circ f_\beta$.
\end{enumerate}

\noindent
\textbf{Initial step:} Let $I_0$ be the closed interval $[0,1]$ with the standard order and let $f_0:X\to I_0$ be a continuous, monotone surjection, obtained from Lemma \ref{NG}. By Lemma $\ref{porzadek}$ function $f_0$ is also order-monotone. Replacing, if necessary, $f_0$ by $1-f_0$, we can assume that $f_0$ is non-decreasing.\medskip

\noindent
\textbf{Successor step:} Suppose that $0 < \alpha<\omega_1$ and $I_\beta, f_\beta, p_\gamma^\beta$ satisfying conditions (a--f) have been already constructed for $\gamma < \beta \le \alpha$.  
Let $$S_{\alpha} = \{s\in I_{\alpha}: |f_{\alpha}^{-1}(s)|>1\}.$$
Since $X$ is Suslinean, the set $S_{\alpha}$ is countable. We define $I_{\alpha+1}$ as the following subset of $I_\alpha \times [0,1]$:
$$I_{\alpha+1} = (I_\alpha \times \{0\}) \cup (S_\alpha \times [0,1])$$
and we declare that $\le_{\alpha+1}$ is a lexicographic order on  $I_{\alpha+1}$. Using  countability of $S_{\alpha}$ one can verify that $(I_{\alpha+1},\leq_{\alpha+1})$ is order isomorphic to the interval $([0,1],\le)$.

The map $p_\alpha^{\alpha+1}$ is the restriction of the projection of $I_\alpha \times [0,1]$ onto the first axis, obviously, it satisfies condition (b). For $\beta < \alpha$,  we put $p^{\alpha+1}_\beta = p_\beta^\alpha \circ p_\alpha^{\alpha+1}$.

For $s \in S_{\alpha}$ let $X_s = f_{\alpha}^{-1}(s)$.
$X_s$ is a Suslinean chainable continuum, so we can apply Lemma \ref{NG}, and  let $f^s_{\alpha+1}: X_s\to [0,1]$ be a monotone continuous surjection given by this lemma. As in the previous step, we may require that  $f^s_{\alpha+1}$ is non-decreasing.  

We define $f_{\alpha+1}: X\to I_{\alpha+1}$ by the formula
$$f_{\alpha+1}(x) = \begin{cases} (f_{\alpha}(x),0)& \mbox{for } x\in X\setminus f_{\alpha}^{-1}(S_\alpha)\,,\\
 (f_{\alpha}(x), f^{f_{\alpha}(x)}_{\alpha+1}(x))& \mbox{for } x\in f_{\alpha}^{-1}(S_\alpha)\,. 
\end{cases}$$
A routine verification shows that conditions (c), (d), and (f) are satisfied. It remains to prove that $f_{\alpha+1}$ satisfies condition (e). Let $a_\gamma = \min (I_{\gamma},\leq_{\gamma})$, and $b_\gamma = \max (I_{\gamma},\leq_{\gamma})$. It is enough to check that for each $u\in I_{\alpha+1}$ with $a_{\alpha+1} <_{\alpha+1} u <_{\alpha+1} b_{\alpha+1}$, the inverse images $f_{\alpha+1}^{-1}((u,b_{\alpha+1}])$ and $f_{\alpha+1}^{-1}([a_{\alpha+1},u))$ are $F_\sigma$-sets in $X$. We will verify this for the first inverse image, the argument for the other one is the same. We have two cases:\\
\textbf{Case 1.} $u = (s,0)$, where $s\notin S_\alpha$. Then 
$$f_{\alpha+1}^{-1}((u,b_{\alpha+1}]) = f_{\alpha}^{-1}((s,b_{\alpha}])$$
which is an $F_\sigma$-set in $X$ by the inductive assumption.\\
\textbf{Case 2.} $u = (s,t)$, where $s\in S_\alpha, t\in [0,1]$. Then 
$$f_{\alpha+1}^{-1}((u,b_{\alpha+1}]) = f_{\alpha}^{-1}((s,b_{\alpha}]) \cup (f^s_{\alpha+1})^{-1}((t,1])$$
which is again an $F_\sigma$-set in $X$ being a union of an $F_\sigma$-set in $X$ and a relatively open subset of a subcontinuum $X_s$.\medskip
    
\noindent
\textbf{Limit step:} Suppose that $\alpha <\omega_1$ is a limit ordinal and $I_\beta, f_\beta, p_\gamma^\beta$ satisfying conditions (a--f) have been already defined for $\gamma < \beta < \alpha$. Since each $(I_{\beta},\leq_{\beta})$ is order isomorphic to the unit interval, $I_{\beta}$ equipped with the order topology is an arc.
We define $I_{\alpha}$ to be the inverse limit of the system of arcs $\{I_{\beta}:\beta < \alpha\}$, together with bonding maps $p^{\beta}_{\gamma} : I_{\beta}\to I_{\gamma}$, $\gamma < \beta < \alpha$. It is known  that inverse limit of a countable sequence of arcs with monotone bonding maps is homeomorphic to the unit interval $[0,1]$ (see e.g.\ \cite[Corollary 2.1.14]{macias} and \cite[Corollary 12.6]{nadler}, and note that $I_{\alpha}$ can be identified with the inverse limit of the sequence (of order type $\omega$)  of arcs $(I_{\beta_n})_{n< \omega}$, where $(\beta_n)_{n< \omega}$ is an increasing sequence of ordinals with supremum equal to $\alpha$), so  the limit space $I_{\alpha}$ is an arc.

For $\beta < \alpha$, we let $p^\alpha_{\beta}: I_{\alpha} \to I_{\beta}$ to be the projection from the inverse limit 
$I_{\alpha}$ onto $\beta-th$ coordinate $I_{\beta}$.

Now, we define an order $\leq_{\alpha}$ on $I_{\alpha}$ in the following way:   
$$ (x_\beta)_{\beta < \alpha} \leq_{\alpha} (y_\beta)_{\beta < \alpha} \iff \forall\, {\beta < \alpha}~ x_{\beta} \leq_{\beta} y_{\beta}$$
for  $(x_\beta)_{\beta < \alpha}, (y_\beta)_{\beta < \alpha} \in I_{\alpha}$.

Since all bonding maps $p^{\beta}_{\gamma}$ are non-decreasing, one can easily verify that 
\begin{equation}\label{limit_order_formula}
(x_\beta)_{\beta < \alpha} <_{\alpha} (y_\beta)_{\beta < \alpha} \iff \exists\, {\beta < \alpha}~ x_{\beta} <_{\beta} y_{\beta}\,.
\end{equation}
From the above it easily follows that the order topology (corresponding to this order) on $I_{\alpha}$ coincides with the topology of the inverse limit, hence $(I_{\alpha},\leq_{\alpha})$ is order isomorphic to the interval $([0,1], \le)$.

We define a function
$$f_{\alpha}:(X,\leq_{\U}^{\D}) \to (I_{\alpha},\leq_{\alpha})$$
by the formula
    $$f_{\alpha}(x)=(f_{\beta}(x))_{\beta < \alpha}\quad \mbox{for } x \in X.$$
    Note that since all functions $\{f_{\beta}:\beta < \alpha\}$ are non-decreasing, the function $f_{\alpha}$ is also non-decreasing.
    
    By the definition of $f_\alpha$, for all $t=(t_{\beta})_{\beta < \alpha}\in I_{\alpha}$, we have
\begin{equation}\label{limit_inverse}
f_{\alpha}^{-1}(t) = \bigcap_{\beta < \alpha}f_{\beta}^{-1}(t_\beta).
\end{equation}    
For all $\gamma < \beta <\alpha$, condition (f) implies that
$$f_\gamma^{-1}(t_{\gamma}) =(p_\gamma^\beta\circ f_\beta)^{-1}(t_{\gamma}) =  f_\beta^{-1}((p_\gamma^\beta)^{-1}(t_{\gamma})).$$
Since $p_\gamma^\beta(t_\beta) = t_\gamma$, we conclude that $f_{\beta}^{-1}(t_\beta) \subseteq f_{\gamma}^{-1}(t_\gamma)$. Hence, by condition (d), the family $\{ f_{\beta}^{-1}(t_\beta):  \beta <\alpha\}$ is a descending (in the sense of inclusion) family of subcontinua of $X$. It follows that $f_{\alpha}^{-1}(t)$ - the intersection of this family is also a subcontinuum of $X$; in particular, this shows that the function $f_\alpha$ is surjective.

Finally, it remains to check that condition (e) also hold for $f_\alpha$. As in the sucessor step, it is enough to verify that, for each $t\in I_{\alpha}$ with $a_{\alpha} <_{\alpha} t <_{\alpha} b_{\alpha}$, the inverse images $f_{\alpha}^{-1}((t,b_{\alpha}])$ and $f_{\alpha}^{-1}([a_{\alpha},t))$ are $F_\sigma$-sets in $X$ (recall that $a_\alpha = \min (I_{\alpha},\leq_{\alpha})$, and $b_\alpha = \max (I_{\alpha},\leq_{\alpha})$). This follows immediately from the inductive assumption,   property \ref{limit_order_formula} of the order $\le_\alpha$, and the definition of $f_\alpha$.
\medskip

By the construction of the functions $f_\alpha$, all their fibers are tranches in $X$. Condition (f) and  property \ref{limit_inverse} of fibers of $f_\alpha$, for limit $\alpha$, implies that, for all $x\in X$, the sequence of tranches  $\{f_\alpha^{-1}(f_\alpha(x))\}$ satisfies the conditions from Theorem \ref{Mohler_thm} (recall that Suslinean continua are hereditarily decomposable). Therefore there exists $\alpha_0<\omega_1$ such that all fibers of $f_{\alpha_0}$ are trivial. This means that the function $f_{\alpha_0}$ is a non-decreasing bijection between $(X, \leq_{\U}^{\D})$ and $(I_{\alpha_0},\leq_{\alpha_0})$, hence it is an order isomorphism. Let $g: (I_{\alpha_0},\leq_{\alpha_0}) \to ([0,1],\leq)$ be an order isomorphism guaranteed by condition (a). We put $h = g\circ f_{\alpha_0}$. Trivially, $h$ is an order isomorphism, and is of the first Baire class, by condition (e) and the obvious fact  that $g$ is a homeomorphism with respect to order topologies.

It remains to verify that $h^{-1}: ([0,1],\tau_e) \to (X,\tau)$ is of the first Baire class, i.e., $h$ maps open sets in $X$ onto $F_\sigma$-sets in $[0,1]$.
   
First, observe that Lemma \ref{arc_help} implies that the image $h(K)$ of any subcontinuum $K$ of $X$ is a subinterval of $[0,1]$, hence an $F_\sigma$-set.

To finish the proof, it is enough to note that each open subset $U$ of $X$ is a countable union of subcontinua of $X$.
Indeed, consider a decomposition of $U$ into constituants (recall that a \textbf{constituant} of a point $x$ in $U$ is a union of all continua containing $x$ and contained in $U$). Each constituant of $U$ contains a nontrivial continuum (cf.\ \cite[\S47.III Thm.4]{kuratowski}), therefore $U$ has countably many constituants, since $X$ is Suslinean. In turn, each constituant of $U$ is a countable union of continua (cf.\ \cite[\S47.VIII Thm.2]{kuratowski}) which gives the desired conclusion.
\end{proof}

Theorem \ref{suslin} is the main tool we need to prove the following characterization, in terms of ultrafilter orders, of chainable continua which are Suslinean.

\begin{thm}
Let $(X,\tau)$ be a chainable continuum equipped with an ultrafilter order $\leq_{\U}^{\D}$. Let $\tau_{\U}^{\D}$ be the order topology generated by order $\leq_{\U}^{\D}$. Then
the following conditions are equivalent:
\begin{enumerate}[(i)]
\item $(X,\tau)$ is Suslinean;
\item $(X, \leq_{\U}^{\D})$ is order isomorphic to $([0,1],\leq)$;
\item $(X,\tau_{\U}^{\D})$ is ccc;
\item the identity map $id: (X, \tau) \to (X,\tau_{\U}^{\D})$ is Borel measurable.
\end{enumerate}
\end{thm}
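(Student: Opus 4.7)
The plan is to establish the cyclic chain of implications $(\mathrm{i})\Rightarrow(\mathrm{ii})\Rightarrow(\mathrm{iv})\Rightarrow(\mathrm{iii})\Rightarrow(\mathrm{i})$. The first implication $(\mathrm{i})\Rightarrow(\mathrm{ii})$ is precisely Theorem \ref{suslin}. For $(\mathrm{ii})\Rightarrow(\mathrm{iv})$ one unpacks Theorem \ref{suslin}: the order isomorphism $h:(X,\leq_{\U}^{\D})\to([0,1],\leq)$ supplied there is simultaneously a Borel isomorphism of class $(1,1)$ between $(X,\tau)$ and $([0,1],\tau_e)$ and a homeomorphism between $(X,\tau_{\U}^{\D})$ and $([0,1],\tau_e)$, because both spaces carry the order topology and $h$ is an order isomorphism. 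Writing $id = h^{-1}\circ h$ exhibits the identity as a Borel map followed by a homeomorphism, and so $id$ is Borel.

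For $(\mathrm{iii})\Rightarrow(\mathrm{i})$ I argue by contrapositive. If $X$ is not Suslinean, pick an uncountable family $\{K_\alpha\}_{\alpha<\omega_1}$ of pairwise disjoint non-degenerate subcontinua. Lemma \ref{arc_help} makes each $K_\alpha$ order-convex for $\leq_{\U}^{\D}$, and since a non-degenerate continuum is uncountable I can choose three points $a_\alpha<_{\U}^{\D}b_\alpha<_{\U}^{\D}c_\alpha$ in $K_\alpha$. Then the $\tau_{\U}^{\D}$-open interval $U_\alpha=\{w\in X:a_\alpha<_{\U}^{\D}w<_{\U}^{\D}c_\alpha\}$ contains $b_\alpha$ and, by convexity, lies inside $K_\alpha$. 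The family $\{U_\alpha\}_{\alpha<\omega_1}$ is then an uncountable family of pairwise disjoint non-empty $\tau_{\U}^{\D}$-open sets, contradicting ccc.

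The main obstacle is $(\mathrm{iv})\Rightarrow(\mathrm{iii})$. Suppose $id$ is Borel and, for contradiction, that $(X,\tau_{\U}^{\D})$ is not ccc; extract an uncountable family $\{U_\alpha\}_{\alpha<\omega_1}$ of pairwise disjoint non-empty $\tau_{\U}^{\D}$-open intervals, each Borel in $\tau$. Using the Baire property I write $U_\alpha=G_\alpha\triangle M_\alpha$ with $G_\alpha$ open in $\tau$ and $M_\alpha$ meager, and the disjointness $U_\alpha\cap U_\beta=\emptyset$ forces $G_\alpha\cap G_\beta\subseteq M_\alpha\cup M_\beta$, which is meager. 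If uncountably many $G_\alpha$ are non-empty, a pigeonhole argument against a fixed countable base of $\tau$ produces $\alpha\neq\beta$ and a basic open $B\subseteq G_\alpha\cap G_\beta$; since $B$ would then be meager in the Baire space $(X,\tau)$, this is a contradiction. The delicate sub-case is when only countably many $G_\alpha$ are non-empty, i.e.\ uncountably many $U_\alpha$ are themselves $\tau$-meager; to handle it I plan to exploit that, by Lemma \ref{arc_help} together with the hereditary unicoherence of chainable continua, each $U_\alpha$ lies inside the unique irreducible subcontinuum of $X$ joining the endpoints $a_\alpha, c_\alpha$, and to combine this order-convexity with Borel measurability of $id$ to reduce the meager sub-case to the previous one (e.g.\ by refining to an uncountable subfamily of pairwise disjoint subcontinua along which the Baire property can again be invoked non-trivially).
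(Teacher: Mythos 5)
Your implications (i)$\Rightarrow$(ii) and (iii)$\Rightarrow$(i) are correct and essentially the paper's (your use of three points $a_\alpha <_{\U}^{\D} b_\alpha <_{\U}^{\D} c_\alpha$ to guarantee the intervals are nonempty is a nice touch), and Case 1 of your (iv)$\Rightarrow$(iii) argument is sound. But there are two genuine gaps.

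The main gap is the ``delicate sub-case'' of (iv)$\Rightarrow$(iii): what you offer there is a plan, not an argument, and this sub-case is the entire content of the implication, not a technicality. (It is also the case that actually occurs: e.g.\ on the Knaster continuum, an order-interval with endpoints in one composant lies inside that composant by Lemma \ref{arc_help}, and composants are meager, so the natural witnesses to non-ccc are all $\tau$-meager and your Case 1 never engages.) The tools you propose cannot close it. Pairwise disjoint meager Borel sets carry no Baire-category contradiction (single points already form such a family); the continua $K_\alpha$ you build need not be pairwise disjoint; and even if you refined them to an uncountable disjoint family of nondegenerate subcontinua, that only says $X$ is not Suslinean, which contradicts nothing you have assumed \emph{within this implication}. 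The remaining natural route is counting: from $\neg$(iii) you get $\omega_1$ disjoint nonempty open intervals, hence $2^{\aleph_1}$ distinct $\tau_{\U}^{\D}$-open sets, but $2^{\aleph_1} > \mathfrak{c}$ is not a theorem of ZFC (it fails under MA$+\neg$CH), so you cannot conclude that some open set is not $\tau$-Borel. The paper sidesteps this by proving (iv)$\Rightarrow$(i) instead of (iv)$\Rightarrow$(iii): from $\neg$(i), a folklore fact (a Cantor set in the hyperspace $C(X)$, via the Kuratowski--Mycielski theorem) yields $\mathfrak{c}$ --- not merely $\omega_1$ --- pairwise disjoint nondegenerate subcontinua, hence $\mathfrak{c}$ disjoint nonempty order-intervals, hence $2^{\mathfrak{c}}$ distinct $\tau_{\U}^{\D}$-open sets, while $(X,\tau)$ has only $\mathfrak{c}$ Borel sets. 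That count needs $\mathfrak{c}$-many disjoint continua, which $\neg$(i) provides and $\neg$(iii) does not; this is exactly why your choice of which implication to attack leaves you stuck, and I do not see how to complete your sub-case without switching to the paper's route.

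The second gap is structural: your proof of (ii)$\Rightarrow$(iv) invokes Theorem \ref{suslin}, whose hypothesis is (i), not (ii). Under (ii) alone you have \emph{some} order isomorphism $h$, but nothing says it is a Borel isomorphism of class $(1,1)$; that extra property is part of the \emph{conclusion} of Theorem \ref{suslin} and is available only when $X$ is Suslinean. So what you actually prove is (i)$\Rightarrow$(iv). Even granting the first gap were fixed, your implications then give (i)$\Leftrightarrow$(iii)$\Leftrightarrow$(iv) and (i)$\Rightarrow$(ii), but nothing flows out of (ii), so the four-way equivalence is not established. The repair is cheap and is what the paper does: (ii)$\Rightarrow$(iii) is immediate, since a space order isomorphic to $([0,1],\leq)$ has separable, hence ccc, order topology; the paper's scheme is (i)$\Rightarrow$(ii)$\Rightarrow$(iii)$\Rightarrow$(i) together with (i)$\Rightarrow$(iv) and (iv)$\Rightarrow$(i).
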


\begin{proof}
The implication $(i) \Rightarrow (ii)$ is given by Theorem \ref{suslin}. 
The implication $(ii) \Rightarrow (iii)$ is obvious. 
    
Under the assumption of $(i)$, Theorem \ref{suslin} provides us with an order isomorphism $h: (X, \leq_{\U}^{\D}) \to ([0,1],\leq)$ which is a Borel isomorphism of the class (1,1) between  $(X,\tau)$ and  $([0,1],\tau_e)$. Clearly, $h^{-1}: ([0,1],\tau_e) \to  (X,\tau_{\U}^{\D})$ is a homeomorphism. Therefore $id = h^{-1}\circ h: (X, \tau) \to (X,\tau_{\U}^{\D})$ is of the first Baire class, which shows the implication $(i) \Rightarrow (iv)$.

Finally, suppose that $(X,\tau)$ is not Suslinean. Then, there is a well known folklore fact that there exists a collection of size continuum $\mathcal{C} = \{X_\alpha: \alpha < \mathfrak{c}\}$  of pairwise disjoint non-degenerate subcontinua of $X$ (such $\mathcal{C}$ can be constructed  as a copy of the Cantor set in the hyperspace $C(X)$ of subcontinua of $X$ with the help of Kuratowski-Mycielski theorem, see \cite[Theorem 19.1]{kechris}). From each $X_\alpha$ we pick a pair $a_\alpha,b_\alpha$ of distinct points. Without loss of generality we can assume that $a_\alpha <_{\U}^{\D} b_\alpha$ for all $\alpha$. From Lemma \ref{arc_help} we infer that the family $\mathcal{I} = \{(a_\alpha,b_\alpha): \alpha < \mathfrak{c}\}$ has size $\mathfrak{c}$ and consists of  pairwise disjoint nonempty open intervals in the order topology $\tau_{\U}^{\D}$ on $X$. This gives us the implication $(iii) \Rightarrow (i)$. Moreover, for each subfamily $\mathcal{J} \subseteq \mathcal{I}$, its union is an open set in $(X,\tau_{\U}^{\D})$, hence this space has $2^\mathfrak{c} > \mathfrak{c}$ many open sets. Since $(X,\tau)$ has only $\mathfrak{c}$ many Borel sets, the identity map $id: (X, \tau) \to (X,\tau_{\U}^{\D})$ cannot be Borel measurable. This shows the implication $(iv) \Rightarrow (i)$, which completes our proof.
\end{proof}

\section{Ultrafilter orders on the Knaster continuum}\label{knaster}

Let $\mathcal{C} \subseteq [0,1] \times \{0\} \subseteq \R$  be the standard Cantor set.

The following definition is taken from \cite[\S48.V Ex.1]{kuratowski}.
\begin{defi}\label{knasterowskie}
\textbf{The Knaster continuum} is defined as a subspace of $\R^2$, consisting of:
\begin{itemize}
    \item all semi-circles with ordinates $\geq 0$, with center $(\frac{1}{2},0)$ and passing through every point of the Cantor set $\mathcal{C}$,
    \item all semi-circles with ordinates $\leq 0$, which have for $n\geq1$ the center at $(\frac{5}{2\cdot 3^n},0) $ and pass through each point of the Cantor set $\mathcal{C}$, lying in the interval $[\frac{2}{3^n}, \frac{1}{3^{n-1}}]$.
\end{itemize}   
\end{defi}

 In the book \cite{nadler} the Knaster continuum was defined in an alternative way -- it is a space homeomorphic to the inverse limit of the sequence of arcs $\varprojlim (I_i,f_i)_{i=1}^{\infty}$, where, for each $i$, $I_i = [0,1]$ and $f_i = f$, and the map $f: [0,1] \to [0,1] $ is given by:
        \begin{equation}\label{knaster_function}
f(t)=
    \begin{cases}
        2t & \text{for } t\in [0,\frac{1}{2}],\\
        -2t+2 & \text{for } t\in [\frac{1}{2},1].
    \end{cases}
\end{equation}

\begin{thm}
    There exist exactly $2^{\mathfrak{c}}$ distinct ultrafilter orders on the Knaster continuum.
\end{thm}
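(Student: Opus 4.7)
The argument splits into an easy upper bound and a substantive lower bound. For the upper bound, since the Knaster continuum $K$ has cardinality $\mathfrak{c}$, every linear order on $K$ is a subset of $K\times K$, so there are at most $2^{|K\times K|}=2^{\mathfrak{c}}$ linear orders on $K$, and in particular at most $2^{\mathfrak{c}}$ ultrafilter orders.

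For the lower bound, I would work through the inverse limit representation $Y=\varprojlim([0,1],f)$, where $f$ is the tent map, and apply Theorem~\ref{transfer}. Fix a homeomorphism $h\colon Y\to K$; then every order $\leq_{\U}^{(I_i,f_i)_{i=1}^{\infty}}$ from Definition~\ref{inv_lim} transfers to an ultrafilter order on $K$, and since $h$ is a bijection of the underlying sets, distinct orders on $Y$ give rise to distinct orders on $K$. There are $2^{\mathfrak{c}}$ non-principal ultrafilters on $\N$, so it suffices to show that the assignment $\U\mapsto\leq_{\U}^{(I_i,f_i)_{i=1}^{\infty}}$ is injective.

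The key lemma I would establish is that every subset $A\subseteq\N$ is realized as the ``order set'' of some pair of points in $Y$, i.e.\ there exist $x^A=(x_n^A),\ y^A=(y_n^A)\in Y$ with
\[ \{n\in\N:x_n^A\leq y_n^A\}=A. \]
The construction is inductive, exploiting the two-to-one nature of the tent map: for every $t\in(0,1)$, $f^{-1}(t)=\{t/2,\,1-t/2\}$ with $t/2\in(0,1/2)$ and $1-t/2\in(1/2,1)$. Pick $x_1^A,y_1^A\in(0,1)\setminus\{1/2\}$ with the order dictated by whether $1\in A$. Given distinct $x_n^A,y_n^A\in(0,1)$ with the correct inequality, if $n+1\in A$ set $x_{n+1}^A=x_n^A/2$ and $y_{n+1}^A=1-y_n^A/2$, which forces $x_{n+1}^A<1/2<y_{n+1}^A$; if $n+1\notin A$, swap the two preimage choices. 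All coordinates remain in $(0,1)\setminus\{1/2\}$, so the sequences are legitimate points of $Y$ with the required property.

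Granted the lemma, injectivity follows immediately: if $\U_1\neq\U_2$ are distinct non-principal ultrafilters on $\N$, pick $A$ with $A\in\U_1$ and $\N\setminus A\in\U_2$; the pair $(x^A,y^A)$ then satisfies $x^A<_{\U_1}^{(I_i,f_i)_{i=1}^{\infty}}y^A$ and $y^A<_{\U_2}^{(I_i,f_i)_{i=1}^{\infty}}x^A$, so the orders differ. I expect the only subtle point to be verifying that the inductive construction never lands at $0$, $1$, or $1/2$ (where $f$'s preimage structure collapses or fails to split cleanly across $1/2$); this is automatic from the tent map preserving $(0,1)\setminus\{1/2\}$, so it is more a matter of careful bookkeeping than a genuine obstacle. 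The real conceptual move is recognizing that $Y$, rather than $K$ itself, is the right place to work: in $Y$ the freedom of the tent map makes realizability of every $A\subseteq\N$ transparent.
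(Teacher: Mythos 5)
Your proposal is correct and follows essentially the same route as the paper: a trivial counting upper bound, plus a lower bound obtained by realizing every $A\subseteq\N$ as the coordinatewise order set of a pair of points in $\varprojlim([0,1],f)$ and then transferring via Theorem~\ref{transfer}, using that there are $2^{\mathfrak{c}}$ non-principal ultrafilters on $\N$. The only difference is cosmetic: the paper threads its pair of points through the preimage sets $B_i=f^{-i}(\{1/2\})$ and tracks the nesting of preimage pairs, whereas you use the explicit formula $f^{-1}(t)=\{t/2,\,1-t/2\}$ and the fact that the two preimages straddle $1/2$, which makes the inductive step slightly cleaner bookkeeping for the same construction.
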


\begin{proof}
Let us assume that $\U_1$ and $\U_2$ are distinct ultrafilters on $\N$.
Let $\varprojlim (I_i,f_i)_{i=1}^{\infty}$ be the inverse limit representation of the Knaster continuum described above.
We will show that $\leq_{\U_1}^{(I_i,f_i)_{i=1}^{\infty}} \neq \leq_{\U_2}^{(I_i,f_i)_{i=1}^{\infty}}$.

Since $\U_1 \neq \U_2$, there exists an infinite set $A \subseteq \N$ such that $A \in \U_1$ and $\N \setminus A \in \U_2$.
We inductively define sequences $x = (x_0,x_1,...)$ and $y= (y_0,y_1,...)$ such that for every $i$, 
$f(x_i)=x_{i-1}$ and $f(y_i)=y_{i-1}$, ensuring that $x_i > y_i$ if and only if $i\in A$, for $i\geq1$.
\begin{itemize}
\item $i=0:$ Let $x_0=y_0=\frac{1}{2}$.
\item $i=1:$ If $1\in A$, then let $x_1=\frac{3}{4}, y_1=\frac{1}{4}$. If $1\notin A$, then let $x_1=\frac{1}{4}, y_1=\frac{3}{4}$.
\item $i\geq 2:$ 
Let us assume first, that $x_{i-1}<y_{i-1}$.
We know that 
for every $z \in [0,1)$, $f^{-1}(z)=\{z/2, 1-z/2\}$ has two elements.

For every $i$, let $f^{-1}(x_{i-1})=\{x_i^{1},x_i^{2}\}$ and let $f^{-1}(y_{i-1})=\{y_i^{1},y_i^{2}\}$.
We can suppose that $x_i^{1}<x_i^{2}$ and $y_{i}^{1}<y_i^2$. 
Then we have $x_i^1<y_i^1<y_i^2<x_i^2$. Let $y_i$ be any element from the set $f^{-1}(y_{i-1})$.
If $i\in A$ let $x_i=x_i^2$ and if $i\notin A$ let $x_i=x_i^1$.

The case $x_{i-1}>y_{i-1}$, we choose $x_i$ and $y_i$ in a symmetric way. 
\end{itemize}

Therefore, 
we found $x,y\in \varprojlim (I_i,f)_{i=1}^{\infty}$
such that $x_n > y_n \iff n\in A$.
Thus for any ultrafilter $\U$
$$x >_{\U}^{(I_i,f_i)_{i=1}^{\infty}} y 
\iff \{n\in \N: x_n> y_n\} \in \U 
\iff A\in \U.$$

This means that
$x >_{\U_1}^{(I_i,f_i)_{i=1}^{\infty}} y$ and $x <_{\U_2}^{(I_i,f_i)_{i=1}^{\infty}} y$. Therefore, the orders $\leq_{\U_1}^{(I_i,f_i)_{i=1}^{\infty}}$ and $\leq_{\U_2}^{(I_i,f_i)_{i=1}^{\infty}}$ are distinct.


We have thus shown that there are at least as many distinct limit-ultrafilter orders on $\varprojlim (I_i,f_i)_{i=1}^{\infty}$ as there are non-principal ultrafilters on $\N$. By \cite[Theorem 7.6]{jech} we know that there are $2^{\mathfrak{c}}$ such ultrafilters, so this proves that there are $2^{\mathfrak{c}}$ distinct limit-ultrafilter orders on $\varprojlim (I_i,f_i)_{i=1}^{\infty}$. From Theorem \ref{transfer}, we conclude that there exist $2^{\mathfrak{c}}$ different ultrafilter orders on the Knaster continuum. Finally, let us note that there are no more than $2^{\mathfrak{c}}$ different orders, because every ultrafilter order is a relation on the Knaster continuum, and on a set of the cardinality $\mathfrak{c}$ there are exactly $2^{\mathfrak{c}}$ relations.

\end{proof}

\subsection{Order topology generated by a certain ultrafilter order on a Knaster continuum}\label{example}

Let $\U$ be a non-principal ultrafilter on $\N$. Let $\D = \{D_n\}_{n\in \N}$ (where $D_n = \{d_{i,n}\}_{i=1}^{k_n}$) be a sequence of chains covering the Knaster continuum such that for every $n$, $(0,0)\in d_{1,n}$ and $mesh(D_n) \xrightarrow[]{n \to \infty } 0$. It is easy to see that selection of such a sequence of chains is possible.

Consider the Knaster continuum with ultrafilter order $\leq_{\U}^{\D}$. In this part of the paper we will prove that the topological space $(K, \tau_{\U}^{\D})$, i.e. the Knaster continuum equipped with an order topology generated by the order $\leq_{\U}^{\D}$, is a metrizable, non-connected, non-compact and non-separable space.

\begin{defi}
Let $X$ be a continuum and $x\in X$. A \textbf{composant} of a point $x$ is the union of all proper subcontinua of $X$ that contain $x$.
\end{defi}

Let us note that in the Knaster continuum the composants coincide with the arc components \cite[Introduction]{sonja}.

It is also known that the composant of the point $(0,0)$ in the Knaster continuum is a continuous and one-to-one image of a half line, and all the remaining composants of the Knaster continuum (of which there are uncountably many) are continuous and one-to-one images of the open interval \cite[\S48, VI, Examples and remarks]{kuratowski}, \cite[Introduction]{sonja}.

Note that the composant of the Knaster continuum containing the point $(0,0)$ in the space $(K, \tau_{\U}^{\D})$ has the order type of the interval $[0,1)$ (and the point $(0,0)$ is the smallest point in the sense of the order $\leq_{\U}^{\D}$), and all the remaining composants in $(K, \tau_{\U}^{\D})$ have the order type of the open interval. This follows from Theorem \ref{arc} and from the fact that each composant of a Knaster continuum is the sum of an ascending sequence of arcs.

Let us also observe that all composants of a Knaster continuum are open in $(K, \tau_{\U}^{\D})$, which follows from the minimality of the point $(0,0)$, Lemma \ref{arc_help}, and order types of the composants.

 We obtain the following theorem as a corollary from above considerations.

\begin{thm}\label{topo1}
       Let $\U$ be a non-principal ultrafilter on $\N$. Let $\D = \{D_n\}_{n\in \N}$ (where $D_n = \{d_{i,n}\}_{i=1}^{k_n}$) be such a sequence of chains covering the Knaster continuum, that for every $n$, $(0,0)\in d_{1,n}$ and $mesh(D_n)  \xrightarrow[]{n \to \infty } 0$. 

          Then the Knaster continuum with the order topology $\tau_{\U}^{\D}$, generated by an ultrafilter order $\leq_{\U}^{\D}$, is homeomorphic to the disjoint sum of topological spaces $X_i$:
    $$(K, \tau_{\U}^{\D}) \stackrel{\text{homeo}}{\simeq} \bigoplus_{i\in I} X_i, $$
    \small{where $X_0$ is a space homeomorphic to the interval $[0,1)$, corresponding to the arc component of the Knaster continuum containing the point $(0,0)$, and all other $X_i$ are homeomorphic to the open interval $(0,1)$ and correspond to the remaining arc components of the Knaster continuum.}
\end{thm}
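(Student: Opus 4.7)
The plan is to decompose $(K,\tau_{\U}^{\D})$ as a topological disjoint sum of its arc components and then to identify each piece with a standard real interval. The first step is to check that every arc component $C$ is clopen in $\tau_{\U}^{\D}$: openness is already recorded in the discussion preceding the theorem, and closedness follows because the complement of $C$ is the union of all other arc components, each of which is open, and hence the complement is open as well. Once every $C_i$ is clopen and the family $\{C_i\}_{i\in I}$ partitions $K$, one obtains $(K,\tau_{\U}^{\D}) \stackrel{\text{homeo}}{\simeq} \bigoplus_{i\in I} C_i$ directly from the definition of the disjoint sum topology, each $C_i$ being equipped with its subspace topology.

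The second step is to identify each $C_i$ topologically with the standard interval claimed in the theorem. By Corollary \ref{non-mixing}, every arc component $C$ is convex with respect to $\leq_{\U}^{\D}$: whenever $x,y\in C$ and $x <_{\U}^{\D} z <_{\U}^{\D} y$, the point $z$ cannot lie in a different arc component, so $z\in C$. A routine verification shows that on a convex subset of a linearly ordered topological space the subspace topology agrees with the intrinsic order topology of the restricted order. The preceding discussion records, via Theorem \ref{arc} and the expression of each composant as an ascending union of arcs, that $C_0$ is order-isomorphic to $[0,1)$ (with $(0,0)$ as its minimum) and every other $C_i$ is order-isomorphic to $(0,1)$. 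Because an order isomorphism between linearly ordered sets is automatically a homeomorphism of the associated order topologies, this yields $C_0 \stackrel{\text{homeo}}{\simeq} [0,1)$ and $C_i \stackrel{\text{homeo}}{\simeq} (0,1)$ for every $i\neq 0$, and combining this with the first step gives the theorem.

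The only mildly technical point is the observation that on a convex subset of a linearly ordered topological space the subspace topology must coincide with the intrinsic order topology; the rest is book-keeping of facts explicitly noted in the discussion immediately preceding the statement.
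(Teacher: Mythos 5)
Your proposal is correct and follows essentially the same route as the paper, which likewise obtains the theorem by combining the openness of the arc components in $\tau_{\U}^{\D}$ with their order types ($[0,1)$ for the component of $(0,0)$, $(0,1)$ for all others) established in the preceding discussion. Your explicit verification that each component is convex (via Corollary \ref{non-mixing}) and that on a convex subset of a linearly ordered space the subspace topology coincides with the intrinsic order topology makes precise a step the paper leaves implicit, but it is the same argument, not a different approach.
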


\begin{cor}\label{properties}
    The Knaster continuum endowed with the order topology $\tau_{\U}^{\D}$, is a metrizable, non-connected, non-compact and non-separable space.
\end{cor}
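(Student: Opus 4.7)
The plan is to read off all four properties directly from the preceding theorem, which presents $(K, \tau_{\U}^{\D})$ as a topological disjoint sum $\bigoplus_{i\in I} X_i$ with $X_0$ homeomorphic to $[0,1)$ and $X_i$ homeomorphic to $(0,1)$ for $i \neq 0$, combined with the classical fact (cited earlier in the paper from Kuratowski) that the Knaster continuum has $\mathfrak{c}$ many composants, which coincide with its arc components. Hence $|I| = \mathfrak{c}$, and in particular $I$ is uncountable.

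First, for metrizability, I would invoke the standard fact that an arbitrary topological disjoint sum of metrizable spaces is metrizable. Concretely, each $X_i$ is a subspace of $\R$, so carries a natural metric $d_i$; one may rescale so that $d_i \leq 1$ on each $X_i$ and then define a global metric on the sum by $d(x,y) = d_i(x,y)$ when $x,y$ belong to the same $X_i$, and $d(x,y) = 1$ otherwise. A routine check shows that this metric generates exactly the disjoint-sum topology.

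Non-connectedness and non-compactness are then essentially free. Since each $X_i$ is a clopen nonempty subset of the sum and $|I| \geq 2$, the space is not connected. For non-compactness one can either note that the single summand $X_0 \cong [0,1)$ is a clopen subset homeomorphic to a non-compact space, or observe that the partition $\{X_i : i \in I\}$ is an infinite cover by pairwise disjoint nonempty open sets that admits no finite subcover.

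Finally, for non-separability, the key input is that each $X_i$ is simultaneously nonempty and open in $(K,\tau_{\U}^{\D})$, so any dense subset must contain at least one point from every $X_i$, forcing its cardinality to be at least $|I| = \mathfrak{c}$. In particular, no countable set is dense. I do not anticipate a real obstacle here: the only non-formal ingredient is the cardinality of the set of composants of the Knaster continuum, which is a classical result already quoted in the paper, and the rest is bookkeeping about disjoint topological sums.
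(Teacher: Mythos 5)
Your proof is correct and follows essentially the same route as the paper, which presents this corollary as an immediate consequence of the disjoint-sum decomposition $(K,\tau_{\U}^{\D}) \cong \bigoplus_{i\in I} X_i$ together with the previously quoted fact that the Knaster continuum has uncountably many composants. The only cosmetic remark is that the paper itself invokes only \emph{uncountability} of the set of composants (which already suffices for non-separability), whereas you assert $|I|=\mathfrak{c}$; that stronger statement is true classically but is not what the cited passage records.
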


\section{Descriptive complexity of ultrafilter orders on chainable continua}\label{descriptive}

Let $X$ be a chainable continuum and let $\leq_{\U}^{\D}$ be an ultrafilter order on $X$. We define the set
$$ M= \{(x,y)\in X^2: x \leq_{\U}^{\D} y\}.$$
The purpose of this part of the paper is to study the descriptive complexity of the set $M$ as a subset of the space $X^2$.

\begin{lem}
  If $X$ is a non-degenerate  chainable continuum and $\leq_{\U}^{\D}$ is an ultrafilter order on $X$, then $M = \{(x,y)\in X^2 : x \leq_{\U}^{\D} y\}$ is not an open subset in $X^2$.
\end{lem}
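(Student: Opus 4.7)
The plan is to exploit the antisymmetry of the linear order $\leq_{\U}^{\D}$ together with the symmetry of $X^2$ to derive a contradiction from the assumption that $M$ is open. The key observation is that the diagonal $\Delta=\{(x,x):x\in X\}$ can be expressed in terms of $M$ and its ``flip'', and if both of these are open then $\Delta$ itself would have to be open.

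First I would introduce the ``reversed'' set $M^{-1}=\{(x,y)\in X^2: y\leq_{\U}^{\D}x\}$, which is simply the image of $M$ under the homeomorphism $\sigma\colon X^2\to X^2$, $\sigma(x,y)=(y,x)$. Since $\leq_{\U}^{\D}$ is a linear order, every pair $(x,y)\in X^2$ is comparable, so $M\cup M^{-1}=X^2$; by antisymmetry (proved already for $\leq_{\U}^{\D}$ in the discussion after Definition \ref{seq_of_chains}), we have $M\cap M^{-1}=\Delta$. Now, if $M$ were open in $X^2$, then $M^{-1}=\sigma(M)$ would also be open, and therefore $\Delta=M\cap M^{-1}$ would be open in $X^2$.

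Next I would observe that the diagonal of a Hausdorff space cannot be open unless the space is discrete. Indeed, if $\Delta$ were open then for every $x\in X$ there would exist basic open sets $U,V\subseteq X$ with $(x,x)\in U\times V\subseteq\Delta$; fixing any $u_0\in U$, every $v\in V$ would satisfy $(u_0,v)\in\Delta$, hence $v=u_0$, so $V=\{u_0\}$, and symmetrically $U=\{x\}$. Thus $\{x\}$ would be open in $X$ for every $x\in X$, i.e.\ $X$ would be discrete.

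Finally, since $X$ is a non-degenerate continuum it is connected and contains at least two points, so it cannot be discrete. This contradiction shows that $M$ is not open in $X^2$. There is no real obstacle in this argument; everything reduces to the purely order-theoretic identities $M\cup M^{-1}=X^2$ and $M\cap M^{-1}=\Delta$, combined with the elementary fact about diagonals in Hausdorff spaces.
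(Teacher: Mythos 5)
Your proof is correct and rests on the same two ingredients as the paper's: antisymmetry of $\leq_{\U}^{\D}$ (so that a neighborhood of a diagonal point inside $M$ forces nearby points to coincide) and the fact that a non-degenerate continuum has no isolated points. The paper runs this argument directly — an open box $U\times U\subseteq M$ around $(x,x)$ yields $y\in U$, $y\neq x$, with $(x,y),(y,x)\in M$, contradicting antisymmetry — while you repackage it through the flip homeomorphism and the identity $\Delta=M\cap M^{-1}$; this is a cosmetic difference, not a different route.
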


\begin{proof}
Pick any $x\in X$ and let $U\subseteq X$  be open neighborhood of $x$.
Since $X$ is a non-degenerate continuum, there is a point $y\in U, y\ne x$.
Then both pairs $(x,y), (y,x)$ are in $U\times U$ and exactly one of them is in $M$. Hence, no open neighborhood of $(x,x)$ is a subset of $M$.
\end{proof}

\subsection{Arc}
We showed in Theorem \ref{arc} that if $L$ is an arbitrary arc, then there are exactly two distinct ultrafilter orders on $L$ - one of them coincides with the natural order on the arc $<$, and the other is opposite to the order $<$. We thus obtain the following observation:

\begin{fct}
    Let $L$ be an arc and let $ \leq_{\U}^{\D}$ be an ultrafilter order on $L$. Then the set $M = \{(x,y) \in L^2: x\leq_{\U}^{\D} y\}$ is a closed subset of $L^2$.
\end{fct}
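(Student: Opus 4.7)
The plan is extremely short because Theorem \ref{arc} already does almost all of the work. Since $L$ is an arc, fix a homeomorphism $h: [0,1] \to L$ and let $\leq_L$ be the natural order transported from $[0,1]$ via $h$ (i.e.\ $h(s) \leq_L h(t)$ iff $s\leq t$). Applying Theorem \ref{arc} to $X = L$ and to $L$ itself (taking $P=[0,1]$), we get the dichotomy that $\leq_{\U}^{\D}$ either coincides with $\leq_L$ on all of $L$, or is opposite to $\leq_L$ on all of $L$.

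It therefore suffices to check that both candidate orders are closed relations on $L^2$. In the model $[0,1]$, the relation $\{(s,t) : s\leq t\}$ is the preimage of $[0,\infty)$ under the continuous map $(s,t)\mapsto t-s$, hence closed in $[0,1]^2$; by symmetry the reversed relation $\{(s,t) : s\geq t\}$ is also closed in $[0,1]^2$. Pulling these closed sets back through the homeomorphism $h\times h : [0,1]^2 \to L^2$ yields closed subsets of $L^2$, namely $\leq_L$ and its reverse $\geq_L$.

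Combining the two points, $M$ equals either $\{(x,y)\in L^2 : x\leq_L y\}$ or $\{(x,y)\in L^2 : x\geq_L y\}$, each of which is closed in $L^2$. There is no real obstacle here, as the only nontrivial input (the dichotomy between the ultrafilter order and the natural order on arcs) has already been established as Theorem \ref{arc}; the present fact is essentially a direct corollary.
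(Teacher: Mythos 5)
Your proposal is correct and follows exactly the route the paper intends: the paper presents this Fact as an immediate observation from Theorem \ref{arc} (the ultrafilter order on an arc is either the natural order or its reverse), and you merely fill in the easy verification that the natural order and its reverse are closed relations in $L^2$, which is what the paper leaves implicit.
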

 
It turns out that the existence of an ultrafilter order for which the set $M$ is closed characterizes the arc.

\begin{thm}
    Let $X$ be a chainable continuum and let $\leq_{\U}^{\D}$ be an ultrafilter order on $X$. If the set $M = \{(x,y)\in X^2 : x \leq_{\U}^{\D} y\}$ is closed in $X^2$, then the space $X$ is homeomorphic to the closed interval $[0,1]$.
\end{thm}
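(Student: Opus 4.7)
The plan is to show that if $M$ is closed in $X^2$, then the order topology $\tau_{\U}^{\D}$ generated by $\leq_{\U}^{\D}$ coincides with the original topology $\tau$ on $X$, and then to invoke the classical order-theoretic characterization of the arc. For the first step, fix $a\in X$ and consider the vertical and horizontal sections of $M$ at $a$: the set $\{y\in X: (a,y)\in M\}=\{y:a\leq_{\U}^{\D} y\}$ and the set $\{y\in X: (y,a)\in M\}=\{y:y\leq_{\U}^{\D} a\}$. Both are $\tau$-closed, being sections of the closed set $M$. Hence their complements, i.e.\ the initial ray $\{y:y<_{\U}^{\D} a\}$ and the final ray $\{y:y>_{\U}^{\D} a\}$, are $\tau$-open. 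These rays form a subbase for $\tau_{\U}^{\D}$, so $\tau_{\U}^{\D}\subseteq \tau$.

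Next, the order topology on any linearly ordered set is Hausdorff. Therefore the identity map $\mathrm{id}:(X,\tau)\to (X,\tau_{\U}^{\D})$ is a continuous bijection from a compact space onto a Hausdorff space, and is consequently a homeomorphism. This yields $\tau=\tau_{\U}^{\D}$; in particular, $(X,\leq_{\U}^{\D})$ viewed as a linearly ordered topological space inherits compactness, connectedness, and metrizability from $(X,\tau)$.

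Finally, I would invoke the following classical fact: a compact, connected, separable linearly ordered topological space with more than one point is homeomorphic to $[0,1]$. Since $X$ is a non-degenerate continuum it is compact, connected, metrizable (and so separable), and we have just seen that its topology equals the order topology of $\leq_{\U}^{\D}$. All the hypotheses of the classical characterization are therefore met, and we conclude $X\simeq[0,1]$. The degenerate case ($X$ a single point) is excluded implicitly by the preceding lemma, which ruled out $M$ being open for non-degenerate $X$ and corresponds to the setting in which the theorem is meaningful.

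The only real hurdle is the first paragraph, and even that is just a matter of reading the closedness of $M$ through its sections; once $\tau_{\U}^{\D}\subseteq\tau$ is established, the remainder is a straightforward packaging of well-known facts (continuous bijections from compact to Hausdorff are homeomorphisms, and the classical LOTS characterization of the arc).
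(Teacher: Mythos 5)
Your proposal is correct and takes essentially the same route as the paper's proof: both observe that the sections of the closed set $M$ are closed, hence the order rays $\{y : y <_{\U}^{\D} a\}$ and $\{y : y >_{\U}^{\D} a\}$ are $\tau$-open, so the identity $(X,\tau)\to (X,\tau_{\U}^{\D})$ is a continuous bijection from a compact space onto a Hausdorff space and therefore a homeomorphism, after which the classical fact that every separable linearly ordered continuum is homeomorphic to $[0,1]$ finishes the argument. The only differences are cosmetic (you phrase the first step via the subbase of rays, the paper via open intervals), so there is nothing substantive to add.
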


\begin{proof}
For clarity, let us denote by $\tau$ the topology on $X$.
Since $M$ is closed in $X^2$, the sets
\begin{align*}
M_a &= \{x\in X: (x,a)\in M\}= \{x\in X: x\leq_{\U}^{\D} a\} = M\cap(X\times \{a\}), \\
M^b &= \{x\in X: (b,x)\in M\}= \{x\in X: b\leq_{\U}^{\D} x\} =  M\cap(\{b\}\times X)
\end{align*}
are closed in $(X,\tau)$, for any $a,b\in X$.
Therefore the sets 
\[ X\setminus M_a = \{x\in X: x>_{\U}^{\D}a\}, \quad
X\setminus M^b = \{x\in X: x<_{\U}^{\D}b\}\] 
are open in $(X,\tau)$.
Thus also the set
\[(X\setminus M_a)\cap (X\setminus M^b) = \{x\in X: a <_{\U}^{\D} x <_{\U}^{\D} b\}\] is open.
This implies that the identity function $id: (X,\tau)\to (X,\tau_{\U}^{\D})$ is continuous. Since $(X,\tau)$ is compact, $id$ is a homeomorphism. Since $(X,\tau_{\U}^{\D})$ is a linearly ordered space, the space $(X,\tau)$, which is homeomorphic to it, is also linearly ordered. Using the fact that every linearly ordered continuum is homeomorphic to a closed interval \cite[6.3.2(b)]{engelking}, the proof is finished.
\end{proof}

\subsection{Suslinean continua}

We showed earlier that for a chainable continuum $X$ and for an ultrafilter order $\leq_{\mathcal{U}}^{\mathcal{D}}$ on $X$, the set $M = \{(x,y)\in X^2 : x \leq_{\mathcal{U}}^{\mathcal{D}} y\}$ is not open in $X^2$ and is usually not closed in $X^2$ (more precisely, an arc is the only chainable continuum $X$ on which there exists an ultrafilter order for which $M$ is closed in $X^2$). However, from Theorem \ref{suslin} we can easily derive  that if $X$ is Suslinean, then the set $M$ is of type $F_{\sigma}$ and $G_{\delta}$ in $X^2$.

\begin{thm}\label{suslin_complexity}
Let $(X,\tau)$ be a Suslinean chainable continuum and let $\leq_{\mathcal{U}}^{\mathcal{D}}$ be an ultrafilter order on $X$. Then the set $$M = \{(x,y)\in X^2 : x \leq_{\mathcal{U}}^{\mathcal{D}} y\}$$ is of both type $F_{\sigma}$ and $G_{\delta}$ in $(X,\tau)^2$.
\end{thm}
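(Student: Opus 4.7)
The plan is to transfer the problem from $X^2$ to $[0,1]^2$ via the order isomorphism supplied by Theorem \ref{suslin}. That theorem provides a bijection $h \colon (X,\leq_{\U}^{\D}) \to ([0,1],\leq)$ which is simultaneously an order isomorphism and a Borel isomorphism of class $(1,1)$ between $(X,\tau)$ and $([0,1],\tau_e)$. The key observation is that $M$ is just the $H$-preimage of the closed triangle $T = \{(s,t) \in [0,1]^2 : s \leq t\}$, where $H = h \times h$. Once one knows that $H$ inherits good descriptive behaviour from $h$, the descriptive complexity of $T$ in $[0,1]^2$ transports back to $M$ in $X^2$.

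First I would verify that $H \colon (X,\tau)^2 \to ([0,1],\tau_e)^2$ is itself of the first Baire class. This is a routine check: open sets in $([0,1],\tau_e)^2$ are countable unions of products of open intervals $V \times W$, and $H^{-1}(V \times W) = h^{-1}(V) \times h^{-1}(W)$ is a product of two $F_\sigma$ subsets of $X$, hence $F_\sigma$ in $X^2$; taking countable unions preserves $F_\sigma$. By complementation, preimages under $H$ of closed subsets of $[0,1]^2$ are $G_\delta$ in $X^2$.

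The $G_\delta$ half is then immediate: $M = H^{-1}(T)$ and $T$ is closed in $[0,1]^2$, so $M$ is $G_\delta$ in $X^2$. For the $F_\sigma$ half I would split $M$ into its diagonal part and its strict-inequality part: $M = \Delta_X \cup H^{-1}(\{(s,t) : s < t\})$. The set $\Delta_X$ is closed in the Hausdorff space $X^2$ (and the decomposition is valid precisely because $h$ is a bijection, so equality of $h$-values amounts to equality of points), while $\{(s,t) : s < t\}$ is open in $[0,1]^2$, so its $H$-preimage is $F_\sigma$ by the first Baire class property of $H$. A union of two $F_\sigma$ sets is $F_\sigma$, so $M$ is $F_\sigma$ as well.

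I do not foresee a genuine obstacle here: all the real work has already been invested in Theorem \ref{suslin}, and what remains is standard descriptive-set-theoretic bookkeeping using the class $(1,1)$ property. The only point requiring a touch of care is remembering to use $h$ being a \emph{bijection} when identifying the fibre $\{(x,y) : h(x) = h(y)\}$ with $\Delta_X$; without that observation one cannot cleanly peel off a closed piece of $M$ to carry out the $F_\sigma$ argument.
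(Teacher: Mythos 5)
Your proposal is correct and follows essentially the same route as the paper's proof: both transfer the problem through $H = h \times h$ using the class $(1,1)$ property from Theorem \ref{suslin}, obtain the $G_\delta$ part from the preimage of the closed triangle (equivalently, the complement of $H^{-1}(\{s>t\})$), and obtain the $F_\sigma$ part from the decomposition $M = \Delta_X \cup H^{-1}(\{s<t\})$. Your explicit verification that $H$ is of the first Baire class and your remark on injectivity of $h$ merely spell out steps the paper leaves implicit.
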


\begin{proof}
By Theorem \ref{suslin} there is an order isomorphism $h:(X, \leq_{\U}^{\D}) \to ([0,1],\leq)$ which is a Borel isomorphism of the class (1,1) between  $(X,\tau)$ and  $([0,1],\tau_e)$. Then the map $H = h\times h: (X,\tau)^2 \to ([0,1],\tau_e)^2$ is of the first Baire class. The sets
$$L = \{(s,t) \in [0,1]^2: s <t\} \mbox{ and } G = \{(s,t) \in [0,1]^2: s > t\}$$
are open in $[0,1]^2$, hence, their inverse images $H^{-1}(L),  H^{-1}(G)$ are $F_\sigma$-sets in $X^2$. Therefore the set $M = X^2 \setminus H^{-1}(G)$ is of type $G_\delta$ in $X^2$. Since the diagonal $\Delta_X = \{(x,x): x\in X\}$ is closed in $X^2$, the set $M = \Delta_X \cup H^{-1}(L)$ is also of type $F_\sigma$ in $X^2$.
\end{proof}

The following example shows that the converse of Theorem \ref{suslin_complexity} does not hold. 

\begin{ex}\label{sin_nonsuslinean}
   Let $\mathcal{C} \subseteq [0,1] \subseteq \R$  be the standard Cantor set.

   Let $X$ be a chainable continuum (see Figure \ref{fig:sine}) defined as:

   $$X = (\mc{C}\times [-1,1])\cup \Big\{\Big(t,\sin\Big(\frac{1}{dist(t,\mc{C})}\Big)\Big): t\in [0,1]\setminus \mc{C}\Big\}.$$
   \begin{figure}[h]
   \includegraphics[width=10cm, ]{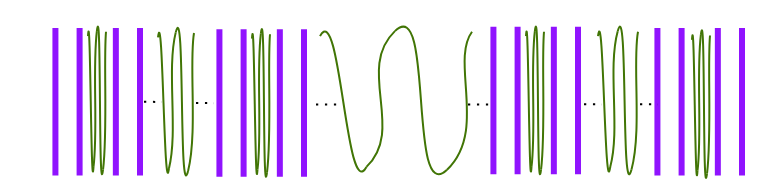} 
   \caption{The continuum $X$.} \label{fig:sine}
    \end{figure}
   Then $X$ is a non-Suslinean chainable continuum.

   Notice that using a similar technique as in the proof of Theorem \ref{thm_S_3} we can obtain a sequence of chains $\mc{D} = \{D_n\}_{n\in \N}$ in $X$, such that 
$mesh(D_n) \xrightarrow[]{n \to \infty } 0$ and, for any non-principal ultrafilter $\mc{U}$, the ultrafilter order $\leq_{\U}^{\D}$ on $X$ is the same as lexicographic order $\leq_{lex}$ on $[0,1]^2$, restricted to $X$ (i.e. $\leq_{\U}^{\D} ~= ~\leq_{lex} \restriction_X $). One can easily check that the lexicographic order $\leq_{lex}$ on $[0,1]^2$ is both of type $F_{\sigma}$ and $G_{\delta}$ (in $[0,1]^4$), hence $\leq_{lex} \restriction_X$ is also both of type $F_{\sigma}$ and $G_{\delta}$ (in $X^2$).   

Notice that if $\tau_{\U}^{\D}$ is the order topology on the space $X$, generated by the ultrafilter order $\leq_{\U}^{\D}$, then the space $(X,\tau_{\U}^{\D}) $ is compact (since each its subset has a supremum) and connected (since it is compact and has no gaps).
Note also that the space $(X,\tau_{\U}^{\D})$ is non-metrizable, since it is compact and not ccc. 
\end{ex}

    \subsection{The Knaster continuum}

The main goal of this section is to present a proof of the following theorem

    \begin{thm}\label{descr_on_knaster}
        For every ultrafilter order $\leq_{\U}^{\D}$ on the Knaster continuum $K$ the set $$M=\{(x,y)\in K^2:x \leq_{\U}^{\D} y\}$$ is a non-analytic and non-co-analytic subset of $K^2$. In particular, for every ultrafilter order $\leq_{\U}^{\D}$ on $K$, the set $M$ is a non-Borel subset of $K^2$.
    \end{thm}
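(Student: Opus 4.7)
We show $M$ is not analytic; the non-co-analytic case follows by applying the same argument to $\{(x,y)\in K^2 : y <_{\U}^{\D} x\}$, and non-Borelness is then a formal consequence. The strategy is a descriptive-set-theoretic reduction: we construct a continuous map $\psi\colon 2^{\N}\to K^2$ together with a continuous surjection $\sigma\colon 2^{\N}\to 2^{\N}$ admitting a continuous section $s$, so that $\psi^{-1}(M)=\sigma^{-1}(2^{\N}\setminus\U)$. If $M$ were analytic then $\psi^{-1}(M)$ would be analytic, and pulling back through $s$ gives that $2^{\N}\setminus \U$ is analytic, hence $\U$ is co-analytic. This contradicts the classical fact that a non-principal ultrafilter on $\N$, being tail-invariant, lacks the Baire property (and thus is neither analytic nor co-analytic).

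To build $\psi$: realize $K$ as $\varprojlim([0,1],f)$ with $f$ the tent map. For each $A\in 2^{\N}$, construct threads $x_A=(x_{A,n})_n$, $y_A=(y_{A,n})_n\in K$ inductively so that $x_{A,n}>y_{A,n}$ iff $n\in A$ for $n\geq 1$ — exactly as in the proof that $K$ admits $2^{\mathfrak c}$ distinct ultrafilter orders earlier in this section. Since the bits used at stage $n$ are only $A(0),\dots,A(n)$, the coordinate maps $A\mapsto x_{A,n}$ and $A\mapsto y_{A,n}$ are locally constant on $2^{\N}$, and therefore $\psi(A):=(x_A,y_A)$ is continuous into the inverse-limit topology of $K^2$.

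To build $\sigma$ and verify $\psi^{-1}(M)=\sigma^{-1}(2^{\N}\setminus\U)$: let $\mathrm{mesh}(D_n)=\varepsilon_n\to 0$. By Lemma \ref{pze} pick an increasing sequence $(m_n)$ such that the fiber diameter $\gamma_{m_n}$ of the projection $p_{m_n}\colon K\to[0,1]$ is much smaller than $\varepsilon_n$, and invoke Lemma \ref{macias} to translate this into separation in the chain $D_n$. A modest refinement of the inductive construction (choosing at each stage the preimage of the tent map that keeps $|x_{A,m_n}-y_{A,m_n}|$ bounded below by a quantity exceeding $2\varepsilon_n$ at the needed coordinates) ensures that for all sufficiently large $n$ the chain $D_n$ places $x_A$ and $y_A$ in non-adjacent links whose chain order is governed by the sign of $x_{A,m_n}-y_{A,m_n}$, equivalently by whether $m_n\in A$. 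Hence $\{n : x_A\leq_{D_n}y_A\}=\{n : m_n\notin A\}$ up to a finite set. Setting $\sigma(A)(n):=A(m_n)$ (a continuous surjection with continuous section $s$ extending sequences by zeros outside $\{m_n\}$), and using that $\U$ is invariant under finite modifications, one obtains $\psi(A)\in M\iff\sigma(A)\notin\U$, completing the reduction.

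\textbf{Main obstacle.} The delicate step is aligning the externally given chain order with the inverse-limit coordinates built into $\psi$. Since it is open (Question \ref{two_approaches}) whether every chain ultrafilter order on $K$ comes from an inverse-limit order, one cannot invoke Theorem \ref{transfer} in reverse; instead the control has to be engineered inside the inductive construction of $(x_A,y_A)$. Concretely, one must keep the $m_n$-th coordinates of $x_A$ and $y_A$ separated by a margin above $\mathrm{mesh}(D_n)$, uniformly in $A$, so that the $\varepsilon$-map estimates of Lemmas \ref{macias} and \ref{pze} transport the comparison between $x_{A,m_n}$ and $y_{A,m_n}$ into a definite chain inequality in $D_n$; verifying that such a separation is achievable simultaneously with the prescribed bit-pattern $x_{A,n}>y_{A,n}\iff n\in A$ is the main technical work of the proof.
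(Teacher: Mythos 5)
Your overall skeleton---produce a Cantor set of pairs on which membership in $M$ reads off a set without the Baire property, then exploit the fact that analytic and co-analytic sets have the Baire property via a projection/section argument---is the same soft descriptive-set-theoretic frame as the paper's proof. But the heart of your argument has a genuine gap, and you have located it yourself: the claim that ``for all sufficiently large $n$ the chain $D_n$ places $x_A$ and $y_A$ in non-adjacent links whose chain order is governed by the sign of $x_{A,m_n}-y_{A,m_n}$'' is asserted, not proved, and nothing in Lemmas \ref{macias} and \ref{pze} yields it. Those lemmas (together with the metric on the inverse limit) only give \emph{metric} separation $d(x_A,y_A)>2\,mesh(D_n)$, hence that the two points lie in non-adjacent links, i.e.\ that they are strictly comparable under $\leq_{D_n}$. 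They say nothing about \emph{which} of the two comes first: that is decided by the enumeration and the folding pattern of the externally given chain $D_n$, which need not be correlated with the coordinate projection $p_{m_n}$ at all. The only tool available for pinning down a chain order is Lemma \ref{arc_help}, which requires a proper subcontinuum containing two of the points and omitting the third; your pairs $(x_A,y_A)$ generically lie in different composants of the indecomposable continuum $K$, so no proper subcontinuum contains both and no such control exists. In effect, the missing step is a strong rigidity statement for arbitrary fine chains on $K$ (that their order on well-separated points agrees, up to reversal, with a coordinate order), which is essentially a form of the open Question \ref{two_approaches}---and, as you note, Theorem \ref{transfer} cannot be invoked in reverse. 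Deferring this to ``the main technical work of the proof'' leaves the proof without its central ingredient.

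The paper's proof is engineered precisely to avoid this obstruction: it never uses the chains to compare points lying in different composants. It fixes the Cantor cross-section $C=K\cap\{x=\tfrac12\}$ and pairs each $x\in C$ with the point $x'$ on the \emph{same} semicircle, so that every comparison $x$ versus $x'$ takes place inside an arc, where Theorem \ref{arc} makes the restricted order rigid up to reversal. The switching homeomorphisms $s_n$ of $C$ move such pairs along arcs inside composants, and Lemmas \ref{czesciowo_dobrze}, \ref{composing_and_composing} and \ref{change} show that each $s_n$ flips the orientation set $A=\{x\in C:\ x'<_{\U}^{\D}x\}$; a Baire-category argument (Lemmas \ref{rozdmuchany_zbior} and \ref{głowne}) then shows $A$ cannot have the Baire property, and the graph-projection trick yields non-analyticity and non-co-analyticity of $M$ in one stroke. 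If you wish to rescue your reduction-to-$\U$ approach, you must first prove the chain-rigidity statement above, which is open; otherwise the intrinsic route of the paper, which needs no information whatsoever about how $\leq_{\U}^{\D}$ behaves across composants, is the way the argument actually closes.
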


 Let $\leq_{\U}^{\D}$ be any ultrafilter order on $K$. Let $C = K\cap l$, where $l$ is a line described as $x=\frac{1}{2}$. Let $l'$ be a line described as $x=\frac{9}{20}$ and let $C'=K\cap l'$. Notice that $C$ and $C'$ are homeomorphic to the Cantor set. For every $x\in C$ let $H_x$ be the unique semicircle from definition of $K$ (see Definition \ref{knasterowskie}), such that $x\in H_x$ and let $x'\in C'$ be such the point in $H_x$, which lies on line $l'$.
    
    We have a bijective correspondence between points in $ C$ and sequences in $\{0,1\}^{\N}$, described as follows:

    For $y\in \{0,1\}^{\N}$ let $$p(y) = \sum_{n=0}^{\infty} \frac{2y_n}{3^{n+2}}.$$
    Notice that for every binary sequence $y$ there exists exactly one $x \in C$ such that the point $(p(y),0)$ is in $H_{x}$, and $y$ is uniquely determined by $x$. Therefore we can identify points $y \in \{0,1\}^{\N}$ and $x\in C$. This correspondence is a homeomorphism between $C$ and $\{0,1\}^{\N}$.


    

    From now we will be referring to points in $C$ as to infinite binary sequences, using the above correspondence.
    We consider the following partition of $C$ into two sets. Let 
    \[A=\{x\in C: x' <_{\U}^{\D} x \},\quad B = \{x\in C:  x' >_{\U}^{\D} x \}.\]
Then $C=A\cup B$ and $A\cap B=\varnothing$.
For $n\in \N\cup \{0\}$ we define functions $s_n:C\to C$ by 
   \[s_n(x) = s_n((x_0,x_1,x_2,...,x_n,x_{n+1},...)) = (x_0,...,x_{n-1},1-x_n, 1-x_{n+1},... ),\]
for $x\in C$.
We define sets $A_n\subseteq C$ by:
    \begin{itemize}
        \item  $A_0 = C$.
        \item  $A_n = \{x\in C: x_k=0 \text{ for all } k\leq n-2 \text{ and } x_{n-1}=1\}$, for $n>0$. 
    \end{itemize}

    Let $B_{s} = \{x\in C: x\restriction n = s\}$, for $s\in \{0,1\}^n$.

    \begin{defi}
        Let $D\subseteq C$. We say that \textbf{$s_n$ changes orientation on $D$} if $s_n(x) \in B$ when $x\in A\cap D$ and $s_n(x)\in A$ when $x\in B\cap D$.
    \end{defi}

    \begin{lem}\label{czesciowo_dobrze}
     For all $n \in \N\cup\{0\}$, the function $s_n$ changes orientation on $A_n$.
\end{lem}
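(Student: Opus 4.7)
My plan is to apply Theorem~\ref{arc} on an explicit arc $J \subseteq K$ that contains the four distinguished points $x$, $x'$, $s_n(x)$, $s_n(x)'$ in a restricted order. I build $J$ as a union of the two upper semicircles $H_x$, $H_{s_n(x)}$ together with a single lower Knaster semicircle $\sigma$ joining them at common endpoints on the Cantor set. For $n \ge 1$ and $x \in A_n$, a short computation with the series defining $p$ yields $p(x),\, p(s_n(x)) \in [2/3^{n+1},\,1/3^n]$ and $p(x)+p(s_n(x)) = 5/3^{n+1}$; these are exactly the Cantor points paired at level $n+1$ about the centre $5/(2\cdot 3^{n+1})$, so Definition~\ref{knasterowskie} supplies $\sigma$ joining the \emph{left} endpoints of $H_x$ and $H_{s_n(x)}$. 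For $n = 0$ this recipe fails because the left endpoints are symmetric about $1/6$, which is not a Knaster centre; but then the \emph{right} endpoints $1-p(x)$ and $2/3+p(x)$ lie in $[2/3,1]$ and are symmetric about $5/6$, hence paired by a level-$1$ lower semicircle $\sigma$.

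Once $J = H_x \cup \sigma \cup H_{s_n(x)}$ is in hand, I check that the three pieces meet only at common endpoints (so that $J$ is indeed an arc) and that $l' \cap J = \{x', s_n(x)'\}$. The latter reduces to $\sigma \cap l' = \emptyset$, which is clear since $\sigma$ lives in $\{x \le 1/3\}$ when $n \ge 1$ and in $\{x \ge 2/3\}$ when $n = 0$, while $l'$ is the line $x = 9/20$. Because $9/20 < 1/2$, the line $l'$ crosses each upper semicircle between its left endpoint and its top. Parametrising $J$ linearly I then read off the order of the four marked points: $x, x', s_n(x)', s_n(x)$ for $n \ge 1$, and $x', x, s_0(x), s_0(x)'$ for $n = 0$.

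To conclude, Theorem~\ref{arc} (applied to $J$) forces the order $\leq_{\U}^{\D}$ restricted to $J$ to coincide with the parametrisation above or with its reverse. A case analysis of the two possibilities (in each of the two cases $n = 0$, $n \ge 1$) shows that in every situation exactly one of $x, s_n(x)$ satisfies $x' < x$ while the other satisfies $s_n(x) < s_n(x)'$; equivalently, $x \in A \iff s_n(x) \in B$, which is what the lemma demands. The main obstacle I expect is the case $n = 0$: the natural left-endpoint construction does not fit the Knaster pattern, and one must notice that switching to the right endpoints puts the pair into the level-$1$ lower semicircle. Once the correct $\sigma$ is identified, the remaining verification (that $J$ is an arc, that $\sigma \cap l' = \emptyset$, and that the four marked points appear in the predicted order along $J$) is routine bookkeeping.
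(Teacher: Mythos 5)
Your proof is correct and is essentially the paper's own argument: the paper likewise joins $H_x$ to $H_{s_n(x)}$ by the unique lower semicircle centered at $\left(\frac{5}{2\cdot 3^{n+1}},0\right)$ meeting $H_x$ (your $\sigma$ is the paper's $I_x$, and your $H_{s_n(x)}$ is its $J_x$), reads off the order of $x, x', s_n(x)', s_n(x)$ along the resulting arc, and applies Theorem \ref{arc}. Your series computation $p(x)+p(s_n(x))=5/3^{n+1}$ and the explicit left-endpoint/right-endpoint case split for $n\ge 1$ versus $n=0$ merely make precise what the paper asserts by appeal to its figure.
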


\begin{proof}
     For $x\in A_n$ let $I_x$ be the unique semicircle with the center at $(\frac{5}{2\cdot 3^{n+1}},0)$ contained in $K$, such that $I_x \cap H_x \neq \varnothing$. Let $J_x$ be the unique semicircle with the center at $(\frac{1}{2},0)$, contained in $K$ and such that $I_x\cap J_x \neq \varnothing$ and $J_x\cap H_x = \varnothing$.

     \begin{figure}[h]
  \centering  \includegraphics[width=0.5\textwidth]{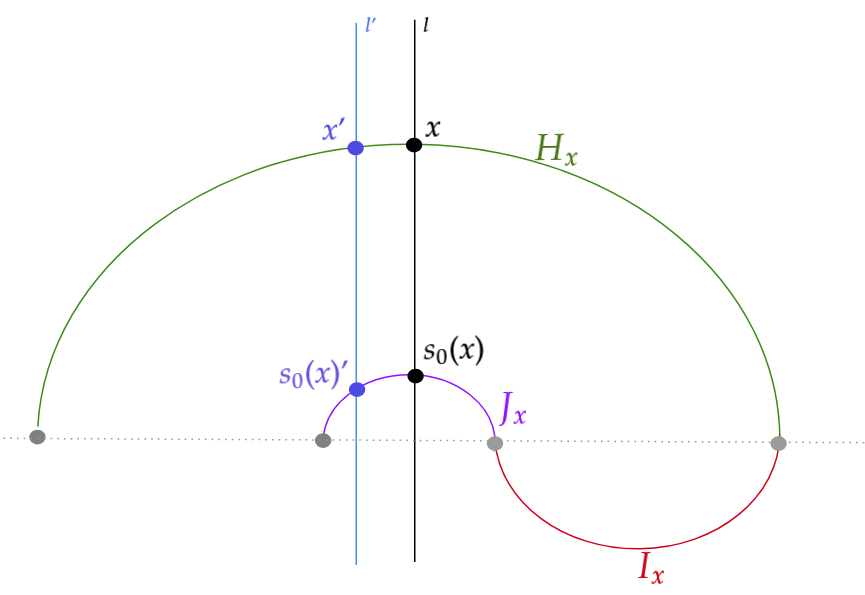}
  \caption{Function $s_0$ on $C$ maps $A$ to $B$ and $B$ to $A$ }\label{fig1}
  \end{figure}

   We consider function $s_n$ on the set $A_n$. Notice that for every $x \in A_n$ there is an arc connecting points $x$ and $s_n(x)$, contained in $H_x\cup I_x\cup J_x$. Similarily, we can connect points $x'$ and $s_n(x)'$ by an arc contained in $H_x\cup I_x\cup J_x$.  

   It is depicted in the Figure \ref{fig1} for $n=0$ that for $x,x' \in H_x$ we have $s_0(x),s_0(x)'\in J_x$ and
   \begin{equation}\label{change_2}
   x <_{\U}^{\D} x' \iff s_0(x) >_{\U}^{\D} s_0(x)'.    
   \end{equation}
   We obtain condition \ref{change_2} with the use of Theorem \ref{arc}.
    A similar argument works analogously also for $n\geq 1$ and function $s_n\restriction_{A_n}$, and shows that for all $n$ function $s_n$ changes orientation on $A_n$. 
\end{proof}

\begin{lem}\label{composing_and_composing}
    For all $ n\in \N\cup\{0\}$ and for all $s\in \{0,1\}^n$, the function $s_n \restriction_{B_s}$ is a composition of an odd number of restrictions of functions $s_{i_k}$, where for all $i_k$ function $s_{i_k}$ is either $s_l$ for some $l<n$ or $s_n \restriction A_n$. 
\end{lem}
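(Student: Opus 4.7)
The plan hinges on the observation that each $s_l$ with $l<n$ commutes with $s_n$ on the whole of $C$. Indeed, a direct coordinate-by-coordinate comparison shows that both $s_l\circ s_n$ and $s_n\circ s_l$ agree with the map that leaves coordinates $x_0,\ldots,x_{l-1}$ and $x_n,x_{n+1},\ldots$ unchanged and flips the ``middle block'' $x_l,\ldots,x_{n-1}$. Combined with $s_l$ being an involution, this gives the key identity $s_l\circ s_n\circ s_l=s_n$ on $C$, and iterating it yields $\tau^{-1}\circ s_n\circ\tau=s_n$ for every finite composition $\tau=s_{l_k}\circ\cdots\circ s_{l_1}$ with all $l_i<n$.

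Given $s\in\{0,1\}^n$, I will construct such a $\tau$ that additionally sends $B_s$ onto $A_n$, so that after restriction the identity above rewrites $s_n\restriction B_s$ as a ``sandwich'' with middle factor $s_n\restriction A_n$. The construction is a left-to-right sweep on prefixes: each $s_l$ fixes the first $l$ coordinates of a prefix and flips the remaining $n-l$ ones. Processing $l=0,1,\ldots,n-2$ in order, I append $s_l$ to $\tau$ whenever the partially transformed prefix still has a $1$ in position $l$, and finally append $s_{n-1}$ if position $n-1$ is still $0$. The resulting $\tau$ is a composition of some $k\leq n$ maps $s_{l_i}$ with $l_i<n$, and its action on prefixes sends $s$ to $(0,\ldots,0,1)$, i.e.\ $\tau(B_s)=A_n$.

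Putting the two ingredients together, for $x\in B_s$ we have $\tau(x)\in A_n$, and since $s_n$ preserves $A_n$ (it does not affect the first $n$ coordinates), the identity $s_n=\tau^{-1}\circ s_n\circ\tau$ restricts on $B_s$ to
$$s_n\restriction B_s=(\tau^{-1}\restriction A_n)\circ(s_n\restriction A_n)\circ(\tau\restriction B_s),$$
which is a composition of $k+1+k=2k+1$ restrictions, each factor being either some $s_{l_i}$ with $l_i<n$ or $s_n\restriction A_n$, exactly as the lemma demands. The edge case $n=0$ is immediate, since $B_\emptyset=C=A_0$ and $s_0\restriction B_\emptyset=s_0\restriction A_0$ is a single factor. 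I do not anticipate a serious obstacle: the commutation calculation is mere bookkeeping, the prefix sweep terminates in at most $n$ steps because each step irreversibly fixes one more position, and the only delicate point is to check that the image of each $s_{l_i}\restriction\cdot$ really lies in the domain declared for the next factor, which is built into the construction of $\tau$.
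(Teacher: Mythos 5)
Your proof is correct and takes essentially the same route as the paper's: both write $s_n\restriction_{B_s}$ as a sandwich $\tau^{-1}\circ (s_n\restriction A_n)\circ \tau$, where $\tau$ is a composition of the involutions $s_l$, $l<n$, carrying $B_s$ onto $A_n$, yielding an odd number of factors. The only difference is that the paper asserts the existence of $\tau$ and the conjugation identity without detail, whereas you make both explicit (the prefix sweep constructing $\tau$ and the commutation relation $s_l\circ s_n\circ s_l=s_n$), which is a filling-in of detail rather than a different argument.
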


\begin{proof}
    Let $s\in \{0,1\}^n$. Then there exist $m_n\in \N$ and functions \\$\{s_{k_i}: 0\leq i\leq m_n \}$ satisfying $k_0<k_1<k_2<...< n$ such that $s_{k_0} \circ...\circ s_{m_n}(B_s) \subseteq A_n$. Apply $s_n\restriction_{A_n}$ to the set $s_{k_0} \circ...\circ s_{m_n}(B_s)$ and notice that $$s_n\restriction_{B_s} = s_{k_0} \circ...\circ s_{m_n}\circ (s_n\restriction A_n)\circ s_{m_n} \circ ... \circ s_{k_0}.$$

\end{proof}


    \begin{lem}\label{change}
        For all $n\in \N\cup\{0\}$, the function $s_n$ changes orientation on $C$.  
    \end{lem}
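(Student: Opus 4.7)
The proof proposal is to proceed by induction on $n$, using the two preceding lemmas as the only tools. Observe that the family $\{B_s : s\in \{0,1\}^n\}$ partitions $C$, so it suffices to show that $s_n$ changes orientation on each $B_s$ separately.

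\textbf{Base case} ($n=0$): Since $A_0 = C$, Lemma \ref{czesciowo_dobrze} applied to $s_0$ on $A_0$ immediately yields that $s_0$ changes orientation on all of $C$.

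\textbf{Inductive step}: Fix $n \ge 1$ and assume that for every $l < n$, the function $s_l$ changes orientation on $C$. Fix $s\in \{0,1\}^n$. By Lemma \ref{composing_and_composing}, the restriction $s_n\restriction_{B_s}$ is a composition of an odd number of maps, each of which is either $s_l \restriction_{D}$ for some $l<n$ and some $D \subseteq C$ (where by the inductive hypothesis $s_l$ reverses orientation on all of $C$, hence on $D$), or the map $s_n \restriction_{A_n}$, which reverses orientation by Lemma \ref{czesciowo_dobrze}. Since the composition of two orientation-reversing maps preserves orientation, and of three reverses it again, an easy secondary induction on the length of the composition shows that an odd composition of orientation-reversing maps is orientation-reversing. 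Consequently $s_n \restriction_{B_s}$ changes orientation, and since $s$ was arbitrary, $s_n$ changes orientation on all of $C$.

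\textbf{Expected main obstacle}: The delicate point is verifying that each factor in the composition from Lemma \ref{composing_and_composing} genuinely acts on a set on which orientation is defined and reversed. For the intermediate factor $s_n \restriction_{A_n}$ this requires that the preceding factors really land in $A_n$, which is exactly what Lemma \ref{composing_and_composing} guarantees. For the remaining factors $s_l$ with $l<n$, orientation reversal is given on the whole of $C$ by the inductive hypothesis, so restriction to any subset is harmless. Once these bookkeeping issues are handled, the parity argument closes the induction.
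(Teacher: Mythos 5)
Your proof is correct and follows essentially the same route as the paper's: induction on $n$, with the base case from Lemma \ref{czesciowo_dobrze} (since $A_0 = C$), and the inductive step writing $s_n\restriction_{B_s}$ via Lemma \ref{composing_and_composing} as an odd composition of orientation-reversing factors and invoking parity. Your explicit attention to the bookkeeping (that intermediate images land where reversal is known) is a welcome elaboration of a point the paper leaves implicit, but it is the same argument.
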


    \begin{proof}

We will prove this lemma by induction on $n$. For $n=0$ the thesis follows from Lemma \ref{czesciowo_dobrze}. Let $n\in \N$ and assume that for all $k<n$ function $s_k$ changes orientation on $C$. By Lemma \ref{czesciowo_dobrze} we know that $s_n\restriction_{A_n}$ also changes orientation. Fix any $s\in \{0,1\}^n$. By Lemma \ref{composing_and_composing} we know that function $s_n \restriction_{B_s}$ is a composition of odd number of functions $s_{i_k}$, where for all $i_k$ function $s_{i_k}$ is either $s_l$ for some $l<n$ or $s_n \restriction A_n$. We know that all of the functions $s_{i_k}$ change orientation, so $s_n \restriction_{B_s}$, which is a composition of odd number of those functions, also changes orientation. Since the choice of $s\in \{0,1\}^n$ was arbitrary, we conclude that $s_n$ changes orientation on $C$.  

    \end{proof}

    \begin{lem}\label{rozdmuchany_zbior}
    For every open and nonempty $U\subseteq C$ and for every $x\in C$ there exists an even number $k_1$ such that
    $$ x\in s_{i_1}\circ...\circ s_{i_{k_1}}(U) \text{ for some } s_{i_1},...,s_{i_{k_1}},$$
    and an odd number $k_2$ such that  $$ x\in s_{j_1}\circ...\circ s_{j_{k_2}}(U) \text{ for some } s_{j_1},...,s_{j_{k_2}}.$$
\end{lem}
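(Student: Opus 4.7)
The plan is to identify $C$ with $\{0,1\}^{\N}$ via the homeomorphism already set up in the paper, reduce to the case that $U$ is a basic cylinder, use a greedy coordinate-fixing argument to produce one composition realizing $x$, and then adjust the parity by appending a single harmless extra $s_m$.

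Since basic open sets in $\{0,1\}^{\N}$ are cylinders $B_s$ with $s\in\{0,1\}^n$, one may fix such a cylinder inside $U$; because $\phi(B_s)\subseteq\phi(U)$ for any map $\phi$, it suffices to realize $x$ as an element of a composition applied to $B_s$. The crucial structural remark is that each $s_i$ is an involution, as flipping the tail beyond position $i$ twice restores the original coordinates. In particular,
\[
x\in s_{i_1}\circ\cdots\circ s_{i_k}(B_s) \iff s_{i_k}\circ\cdots\circ s_{i_1}(x)\in B_s,
\]
so the goal becomes: produce compositions of both parities that \emph{send $x$ into $B_s$}.

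For the main construction, fix the first $n$ coordinates of $x$ one index at a time. At step $i\in\{0,1,\dots,n-1\}$, inspect the current value at position $i$: if it already equals $s_i$, do nothing; otherwise apply $s_i$, which preserves positions $<i$ (hence all already-fixed coordinates) and flips position $i$, so it now reads $s_i$. After at most $n$ steps the resulting point lies in $B_s$, producing a composition of some length $k$ with $0\le k\le n$ that, by the involution remark, witnesses $x\in s_{j_1}\circ\cdots\circ s_{j_k}(U)$ after reversing the order.

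To flip parity, choose any $m\ge n$: since $s_m$ flips only coordinates of index $\ge n$, it maps $B_s$ into itself. Appending one such $s_m$ to the previous composition yields a composition of length $k+1$ still sending $x$ into $B_s\subseteq U$. One of $k,k+1$ is even and the other odd, giving both required values of $k_1$ and $k_2$. The only mild obstacle is the edge case $k=0$ (when $x\in B_s$ from the start): here the length-$2$ composition $s_m\circ s_m$ and the length-$1$ composition $s_m$ (for any $m\ge n$) supply the required even and odd parities. Apart from that bookkeeping and keeping careful track of the direction of composition, the argument is entirely elementary.
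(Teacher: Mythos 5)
Your proof is correct and takes essentially the same route as the paper: reduce to a basic cylinder $B_s\subseteq U$, produce a finite composition of the maps $s_i$ relating $B_s$ to the cylinder of $x$ (your greedy coordinate-fixing argument makes explicit the step the paper merely asserts), and fix the parity by appending one extra $s_m$ with $m\geq n$, which maps $B_s$ onto itself. The involution reformulation and the $k=0$ edge case are handled correctly, so there is nothing to add.
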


\begin{proof}

  Since $U$ is open and nonempty, there exists $n$ and a binary sequence of length $n$, $s\in \{0,1\}^n$, such that $B_s\subseteq U$. 
  Notice that there are finitely many indices $l_1,...,l_m$ such that $s_{l_1}\circ...\circ s_{l_m}(B_s) = B_{x\restriction n} \ni x$. We know that there must happen exactly one of the following cases:
    \begin{itemize}
        \item $s_{l_1}\circ...\circ s_{l_m} \circ s_{n+1} (B_{s\hat\ {0} }) = B_{x\restriction {(n+1})}$ or 
        \item $s_{l_1}\circ...\circ s_{l_m} \circ s_{n+1} (B_{s\hat\ {1} }) = B_{x\restriction {(n+1})}$ 
    \end{itemize}
    If $m$ is even then let $i_1,...,i_{k_1} =l_1,..., l_m$ and $j_1,...,j_{k_2}= l_1,..., l_m,n+1 $.
If $m$ is odd then let $i_1,...,i_{k_1} =  l_1,..., l_m,n+1$ and $j_1,...,j_{k_2} = l_1,..., l_m$.
\end{proof}

\begin{lem}\label{głowne}
    The set $A \subseteq C$ does not have the property of Baire.
\end{lem}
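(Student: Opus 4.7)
The plan is to assume that $A$ has the Baire property and derive a contradiction with the Baire category theorem on the Cantor set $C$, by using the $s_n$ as symmetries that flip the partition $C = A \cup B$. First I would note that each $s_n$ is a self-homeomorphism of $C$, since it is a continuous involution (it only flips tail coordinates); then Lemma \ref{change} implies $s_n(A) = B$ and $s_n(B) = A$, so any composition $\phi = s_{j_1} \circ \cdots \circ s_{j_k}$ of an odd number of such maps satisfies $\phi(A) = B$, while any composition of an even number preserves the partition.

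Suppose, towards a contradiction, that $A$ has the Baire property, i.e., there is an open set $U \subseteq C$ with $A \triangle U$ meager in $C$. If $U = \varnothing$, then $A$ itself is meager; since $B = s_0(A)$ is the image of a meager set under a self-homeomorphism, $B$ is also meager, so $C = A \cup B$ is meager in itself, contradicting the Baire category theorem on the Cantor set.

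It remains to treat the case $U \neq \varnothing$. Pick any $x_0 \in U$ and apply Lemma \ref{rozdmuchany_zbior} to obtain an odd natural number $k_2$ and a composition $\phi = s_{j_1} \circ \cdots \circ s_{j_{k_2}}$ with $x_0 \in \phi(U)$. Since $\phi$ is a homeomorphism of $C$ with $\phi(A) = B$, the set $B \triangle \phi(U) = \phi(A \triangle U)$ is again meager. Let $V := U \cap \phi(U)$; this is open and contains $x_0$, hence nonempty. Inside $V$ we have $V \setminus A \subseteq U \setminus A \subseteq A \triangle U$, so $V \setminus A$ is meager; and $V \cap A = V \setminus B \subseteq \phi(U) \setminus B \subseteq B \triangle \phi(U)$, so $V \cap A$ is meager. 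Hence $V = (V \setminus A) \cup (V \cap A)$ is a nonempty open subset of the Cantor set that is meager in itself, contradicting the Baire category theorem.

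The main obstacle is finding how to transport the Baire-property witness $U$ for $A$ into a Baire-property witness for $B$ that actually overlaps with $U$; Lemma \ref{rozdmuchany_zbior} supplies exactly this, providing an odd-length composition of the flipping homeomorphisms that simultaneously conjugates the partition (by Lemma \ref{change}) and meets any prescribed point, so that the intersection $V = U \cap \phi(U)$ is forced to be simultaneously comeager in $A$ and comeager in $B$ --- an impossibility.
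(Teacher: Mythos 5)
Your proof is correct and follows essentially the same route as the paper's: both hinge on Lemma \ref{change} (odd compositions of the involutions $s_n$ are homeomorphisms of $C$ swapping $A$ and $B$) together with Lemma \ref{rozdmuchany_zbior}, and both conclude by contradicting the Baire category theorem. The only difference is in the finish: the paper globalizes, covering $C$ by even and odd images of $U$ to make $A$ and $B$ simultaneously comeager in $C$, whereas you localize at the single nonempty open set $V = U \cap \phi(U)$, which lets you use just the odd half of Lemma \ref{rozdmuchany_zbior} and skip the finite-covering step.
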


\begin{proof}
    Suppose that $A$ has the Baire property. Then $B$ also has the Baire property. It follows that $A$ is non-meager or $B$ is non-meager. Without loss of generality $A$ is non-meager. This implies that $A$ is a comeager in some open and nonempty set $U$ (i.e. $A = U\triangle M$, where $U$ is open (in $C$), nonempty, and $M$ is meager). From Lemma \ref{rozdmuchany_zbior} we know that
    \begin{itemize}
        \item set $C$ may be covered by finitely many sets of the form $s_{i_1}\circ... \circ s_{i_{k_1}}(U)$, for some even $k_1$ and some $i_1,...i_{k_1}$,   
        \item set $C$ may be covered by finitely many sets of the form $s_{j_1}\circ... \circ s_{j_{k_2}}(U)$, for some odd $k_2$ and some $j_1,...j_{k_2}$.   
    \end{itemize}
    By the fact that functions $s_i$ are homeomorphisms of $C$, and by Lemma \ref{change}, the following implications hold:
    \begin{itemize}
        \item $A$ is a comeager in $U \implies A$ is a comeager in $ s_{i_1}\circ... \circ s_{i_{k_1}}(U)$ for any $i_1,...,i_{k_1} \implies A$ is a comeager in $C$,
        \item $A$ is a comeager in $U \implies B$ is a comeager in $ s_{j_1}\circ... \circ s_{j_{k_2}}(U)$ for any $j_1,...,j_{k_2} \implies B$ is a comeager in $C$.
    \end{itemize}
    Hence disjoint sets $A$ and $B$ are both comeager in $C$ -- this is a contradiction.
\end{proof}

Now we are ready to present the proof of our main theorem of this subsection.
\begin{proof}[Proof of Theorem \ref{descr_on_knaster}]
    Let $g:C\to C'$ be a function which to each $x\in C$ assigns unique point of $H_x \cap l'$ (in other words, $g(x)=x'$ for each $x\in C$).
    Let $Gr(g)$ be the graph of function $g$. This means that $$Gr(g)=\{(x,g(x)):x\in C\} = \{(x,x'):x\in C\}\subseteq C\times C' \subseteq K\times K.$$
    We know that $Gr(g)$ is a closed subset of $K\times K$ - in fact, it is even homeomorphic to the Cantor set.

    Suppose, towards contradiction, that the set $M=\{(x,y)\in K^2: x\leq_{\U}^{\D}y\}$ is an analytic (co-analytic) set.

    Then $M\cap Gr(g)$ is also an analytic (co-analytic) set. Notice that $$B= \pi_1(M\cap Gr(g)),$$
    where $\pi_1$ is a projection onto the first coordinate. This projection restricted to the graph of $g$ is a homeomorphism, so from the fact that $M\cap Gr(g)$ is analytic (co-analytic) we obtain that $B = \pi_1(M\cap Gr(g))$ is also analytic (co-analytic), hence it has the property of Baire. A contradiction with Lemma \ref{głowne}.
\end{proof}

\section{Endpoints and absolute endpoints of chainable continua}

Let $X$ be a chainable continuum.  Recall that a point $x \in X$ is an \textbf{endpoint} of $X$ if for every $\varepsilon>0$, there is an $\varepsilon$-chain covering $X$ such that only the first link of the chain contains $x$. The following fact was proved in \cite{bing-2}.
\begin{lem}[Section 5 in \cite{bing-2}]\label{2_podcont}
    Let $X$ be a chainable continuum. Then a point $x\in X$ is an endpoint of $X \iff $ the condition 
``if each of two subcontinua of $X$ contains $x$, then one of the subcontinua contains the other" is satisfied.
\end{lem}
Following Rosenholtz \cite{rosenholtz}, we say that a point $x\in X$ is an \textbf{absolute endpoint} of $X$ if 
for every $\varepsilon>0$ there is $\delta>0$ such that if $\{d_1,...,d_n\}$ is a $\delta$-chain covering $X$ and $x\in d_k$, then $\bigcup_{j=1}^k d_j \subseteq B(x,\varepsilon)$ or $\bigcup_{j=k}^n d_j \subseteq B(x,\varepsilon)$, where $B(x, \varepsilon)$ is an open ball around $x$ of radius $\varepsilon$.

Below we indicate the relationships between the endpoints and the absolute endpoints of chainable continua and the minimal or maximal points of chainable continua equipped with ultrafilter orders.

\begin{thm}\label{char_endpoints}
    Let $X$ be a chainable continuum and let $x\in X$. Then
    \begin{enumerate}
        \item $x$ is an endpoint of $X \iff$ there exists an ultrafilter order $\leq_{\U}^{\D}$ on $X$ such that $x$ is minimal or maximal with respect to $\leq_{\U}^{\D}$,
        \item $x$ is an absolute endpoint of $X \Longrightarrow$ for every ultrafilter order $\leq_{\U}^{\D}$ on $X$ point $x$ is minimal or maximal with respect to $\leq_{\U}^{\D}$.
    \end{enumerate}
\end{thm}

\begin{proof}[Proof of \textit{$(1)$}]
$\Longrightarrow$ Suppose that $x$ is an endpoint of a chainable continuum $X$. Then for $n\in \N$ we can take a chain $D_n$, covering $X$, such that $x\in d_{1,n}$ and $mesh(D_n)<\frac{1}{n}$. Let $\mc{D} = \{D_n\}_{n\in \N}$ be a sequence of such chains and let $\mc{U}$ be a non-principal ultrafilter on $\N$. Then we can observe that $x$ is a minimal point in the ultrafilter order $\leq_{\U}^{\D}$. 

$\Longleftarrow$ Without loss of generality we can assume that $x$ is minimal with respect to an ultrafilter order $\leq_{\mc{V}}^{\mc{E}}$, for a maximal $x$ the argument is analogous. 
Suppose that $K_1$ and $K_2$ are subcontinua of $X$, both containing a point $x$, which is minimal with respect to the order $\leq_{\mc{V}}^{\mc{E}}$. By Lemma \ref{arc_help} we know that $K_1$ and $K_2$ are nondegenerate intervals in the linearly ordered space $(X,\tau_{\mc{V}}^{\mc{E}})$. Since both $K_1$ and $K_2$ contain a minimal point $x$, we can deduce that $K_1 \subseteq K_2$ or $K_2 \subseteq K_1$. By Lemma \ref{2_podcont} this is equivalent to $x$ being an endpoint.

\textit{Proof of (2)}:
Suppose that $x$ is an absolute endpoint in $X$ and let $\leq_{\U}^{\D}$ be an ultrafilter order on $X$. By \cite[Theorem 1.0 (2)]{rosenholtz}, $X \setminus \{x\}$ is a composant of X. Since composants are intervals in the linearly ordered space $(X, \tau_{\U}^{\D})$ (this follows from Lemma \ref{arc_help}), we know that $X\setminus \{x\}$ is a subinterval of $(X,\tau_{\U}^{\D})$, hence $x$ must be maximal or minimal in $(X,\leq_{\U}^{\D})$. 
\end{proof}

We do not know if the implication in the part (2) of the above theorem can be reversed, see Question \ref{absolute_endpoints}.
Rosenholtz proved that $x$ is an absolute endpoint of a chainable continuum $X$ precisely when $X \setminus \{x\}$ is a composant of $X$ \cite[Theorem 1.0]{rosenholtz}. It is clear that there is no point $x$ in the Knaster continuum $K$ satysfying this condition, so there is no absolute endpoint in $K$. In particular, the point $(0,0)$ is an endpoint of $K$ (for more details see Subsection \ref{example}), but it is not an absolute endpoint. In the case of the Knaster continuum we have the following.

\begin{prp}\label{non-homeo}
There is an ultrafilter order 
on the Knaster continuum $K$ such that $(0,0)$ is neither minimal nor maximal point in this order.

There exist two ultrafilter orders on $K$ that yield non-homeomorphic order topologies on $K$.
\end{prp}

\begin{proof}
    Let $\mc{V}$ be a non-principal ultrafilter on $\N$ such that $\{2,4,6,8,\dots\} \in \mc{V}$. Let $X = \varprojlim (I_i,f_i)_{i=1}^{\infty}$ be an inverse limit representation of  the Knaster continuum described below Definition \ref{knasterowskie}. Recall that for every $i$, $f_i = f$, where $f$ is a map described by the Formula \ref{knaster_function}.

 \begin{figure}[h]
  \centering  \includegraphics[width=0.9\textwidth]{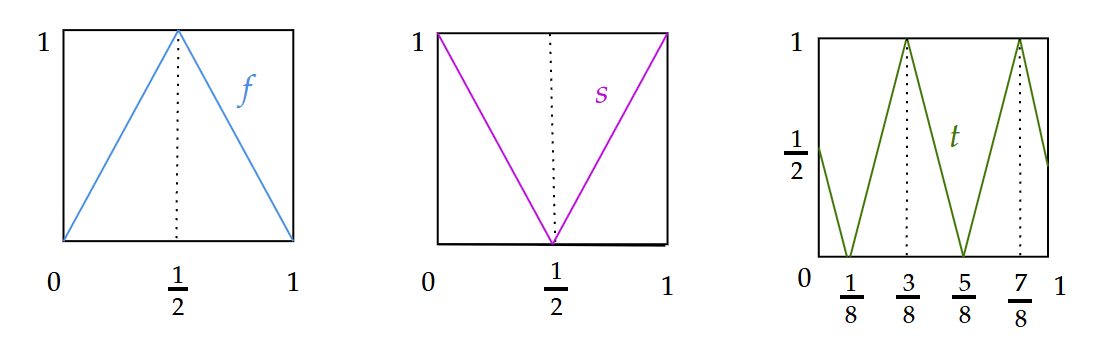}
  \caption{Functions $f$, $s$ and $t$}\label{maps}
  \end{figure}

    \begin{figure}[h]
  \centering  \includegraphics[width=0.9\textwidth]{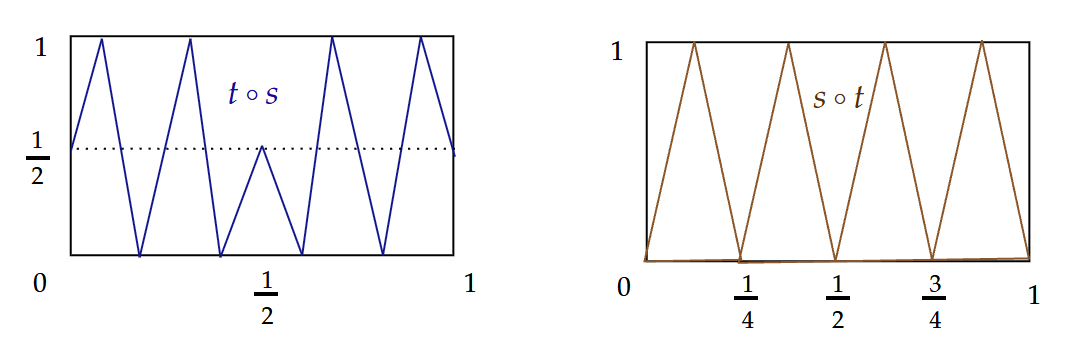}
  \caption{Functions $t\circ s$ and $s\circ t$}\label{maps2}
  \end{figure}

    Below we describe another inverse limit representation of the Knaster continuum. Let $s,t:I\to I$ be the piecewise linear maps on the compact interval, depicted on Figure \ref{maps}. For each $i \geq 1$ and for $x\in I$ we define map $g_i:I\to I$ by

    \begin{equation*}
g_i(x)=
    \begin{cases}
        s(x) & \text{if } i \text{ is odd},\\
        t(x) &  \text{if } i \text{ is even}.
    \end{cases}
\end{equation*}

    Let $Y = \varprojlim (I_i,g_i)_{i=1}^{\infty}$. One can easily see that $s\circ t = f^3$, hence $X$ is homeomorphic to $Y$. Note that the point $(0,0)$ in the Knaster continuum $K$ corresponds to the point $(0,0,0,0\dots)$ in $X$ and to the point $(0,\frac{1}{2},0,\frac{1}{2},\dots)$ in $Y$. Clearly, $\frac{1}{2}$ is a fixed point of $t\circ s$. One can observe that $t\circ s$ has more fixed points (see Figure \ref{maps2}),  in particular there are $p_1, p_2 \in [0,1]$, such that both of $p_i$ are fixed points of $t\circ s$ and $p_1<\frac{1}{2}$, $p_2>\frac{1}{2}$. Let $r_1 = s(p_1)$ and $r_2 = s(p_2)$.

    Then points $(r_1,p_1,r_1,p_1,\dots)$ and $ = (r_2,p_2,r_2,p_2,\dots)$ are in $Y$ and moreover 
    $$ (r_1,p_1,r_1,p_1,\dots) \leq_{\V}^{(I_i,g_i)_{i=1}^{\infty}} (0,\frac{1}{2},0,\frac{1}{2},\dots) \leq_{\V}^{(I_i,g_i)_{i=1}^{\infty}} (r_2,p_2,r_2,p_2,\dots).$$
This shows that the point $(0,\frac{1}{2},0,\frac{1}{2},\dots)$ is neither minimal nor maximal in $Y$ with respect to the order $\leq_{\V}^{(I_i,g_i)_{i=1}^{\infty}}$.
Let $\leq_{\mc{V}}^{\mc{E}}$ be the ultrafilter order on $K$ generated by an order $\leq_{\V}^{(I_i,g_i)_{i=1}^{\infty}}$ on $Y$ (such order exists by Theorem \ref{transfer}). Then $(0,0)$ is neither minimal nor maximal point in $(K,\leq_{\mc{V}}^{\mc{E}} )$.

Let $C_0$ be the composant of $K$ containing the point $(0,0)$. One can easily check that $(0,0)$ is a maximal point in $C_0$ with respect to the order $\leq_{\V}^{\E}$.
Since the composants of the Knaster continuum  coincide with the arc components, by Corollary \ref{non-mixing} we know that for every two different composants $C_1, C_2$ of $K$ we have $$C_1 \leq_{\V}^{\E} C_2 \text{ or } C_2 \leq_{\V}^{\E} C_1. $$

Note that by the definition, $C\leq^{\E}_{\V} C$ is not true if $C$ is a composant (since $C$ has at least two points).
    
There are two cases:
    \begin{enumerate}
        \item for every composant $C$ of $K$ such that $C_0 \leq_{\V}^{\E} C$ there exists a composant $D$ in $K$ such that $$C_0 \leq_{\V}^{\E} D  \leq_{\V}^{\E} C,$$
        \item there is a composant $C$ in $K$ such that $C_0 \leq_{\V}^{\E} C$ and there is no composant $D$ in $K$ such that $$C_0 \leq_{\V}^{\E} D  \leq_{\V}^{\E} C\, .\footnote{Actually, we can prove that the case (2) is not possible, but our proof is much longer than the above discussion of this case.}$$ 
    \end{enumerate}

     We observe that in both above cases the space $(K, \tau_{\mc{U}}^{\mc{D}})$, where $\tau_{\mc{U}}^{\mc{D}}$, is the order topology generated by an ultrafilter order on $K$ described in Subsection \ref{example}, is not homeomorphic to the space $(K, \tau_{\mc{V}}^{\mc{E}})$.
    
   \begin{itemize}
       \item In the case $(1)$ the space $(K,\tau_{\mc{V}}^{\mc{E}} )$ is not locally connected at the point $(0,0)$ and the space $(K, \tau_{\mc{U}}^{\mc{D}})$ is locally connected.
       \item In the case $(2)$  we have:
    $$(K, \tau_{\V}^{\E}) \stackrel{\text{homeo}}{\simeq} \bigoplus_{j\in J} Y_j, $$
    where all $Y_j$ are homeomorphic to the open interval $(0,1)$. By Theorem \ref{topo1} spaces $(K, \tau_{\mc{V}}^{\mc{E}})$ and $(K, \tau_{\mc{U}}^{\mc{D}})$ are not homeomorphic.  
   \end{itemize}
We conclude that $K$ admits two ultrafilter orders such that the order topologies on $K$ generated by these orders produce nonhomeomorphic spaces.
\end{proof}

\section{Questions}
We state here some open questions.

In Theorem \ref{transfer} we have proved that every limit-ultrafilter order on a chainable continuum is an ultrafilter order. We would like to ask if the converse of Theorem \ref{transfer} is true.

\begin{que}\label{two_approaches}
   Is every ultrafilter order a limit-ultrafilter order?
\end{que}

One can easily show that limit-ultrafilter orders are dense. We conjecture that ultrafilter orders defined using sequence of chains are also dense orders.   
\begin{que}
    Is every ultrafilter order a dense order?
\end{que}
We also ask the following stronger question.
\begin{que}
    Is it true that for every chainable continuum $X$, every ultrafilter order $\leq_{\U}^{\D}$ on $X$ and any two distinct points $a,b \in X$ there exists a non-degenerate subcontinuum $K\subseteq X$ such that for all $x\in K$ we have: $a \leq_{\U}^{\D} x \leq_{\U}^{\D} b$?  
\end{que}

Again, one can show that the above question has an affirmative answer  for limit-ultrafilter orders.

\begin{que}\label{1ap}
    Does the order topology $\tau_{\U}^{\D}$ generated by any ultrafilter order $\leq_{\U}^{\D}$ on any chainable continuum $X$ have a countable character in every $x\in (X,\tau_{\U}^{\D})$?
\end{que}

By Corollary \ref{properties} 
there is an ultrafilter order topology on the Knaster continuum that is non-connected and non-compact. 
Moreover, by the proof of Proposition \ref{non-homeo}, there is another topology on the Knaster continuum with the same properties.
We ask if any order topology on any indecomposable chainable continuum also has those properties.
\begin{que}
     Is it true that order topology generated by any ultrafilter order on any indecomposable chainable continuum is non-connected and non-compact?
\end{que}

\begin{que}\label{absolute_endpoints} 
	Let $X$ be a chainable continuum and let $x$ be a point of $X$ which is not an absolute endpoint. 
    Does there exist an ultrafilter order on $X$ such that $x$ is neither minimal nor maximal with respect to this order?
\end{que}

\begin{prob}
    Describe order topologies generated by ultrafilter orders on the pseudoarc.
\end{prob}

\subsection*{Acknowledgments} As mentioned in the introduction, the concept of ultrafilter orders on chainable continua was created by Jakub Różycki.
We would like to thank him for sharing his fruitful idea with us and allowing us to explore this notion.

\end{document}